\documentclass[onefignum,onetabnum]{siamonline250211}

\usepackage{amsmath,amssymb,mathtools}
\usepackage{bm}
\usepackage{algorithm,algorithmic}
\usepackage{graphicx}
\usepackage{booktabs}
\usepackage{enumitem}
\usepackage{multirow}
\usepackage{url}

\usepackage{xr}
\newsiamthm{assumption}{Assumption}
\newsiamremark{remark}{Remark}
\newsiamremark{example}{Example}

\newcommand{\R}{\mathbb{R}}
\newcommand{\E}{\mathbb{E}}
\newcommand{\SO}{\mathrm{SO}}
\newcommand{\SU}{\mathrm{SU}}
\newcommand{\SE}{\mathrm{SE}}
\newcommand{\so}{\mathfrak{so}}
\newcommand{\se}{\mathfrak{se}}
\newcommand{\ssl}{\mathfrak{sl}}
\newcommand{\GL}{\mathrm{GL}}
\newcommand{\gl}{\mathfrak{gl}}

\newcommand{\uu}{\mathfrak{u}}
\newcommand{\su}{\mathfrak{su}}
\newcommand{\tr}{\mathrm{tr}}
\newcommand{\norm}[1]{\left\lVert #1\right\rVert}
\newcommand{\ip}[2]{\left\langle #1,#2\right\rangle}

\providecommand{\g}{}
\renewcommand{\g}{\mathfrak{g}}

\title{
A Representation--Optimization Dichotomy, Lie-Algebraic Policy Optimization%
\thanks{\textit{Preprint. This work is currently under review at the SIAM Journal on Mathematics of Data Science (SIMODS).}}
}

\author{
Sooraj K.C%
\thanks{Department of Pure and Applied Mathematics, Alliance University, Bengaluru, India
(\email{ksoorajPHD23@sam.alliance.edu.in}).}
\and
Vivek Mishra%
\thanks{Department of Pure and Applied Mathematics, Alliance University, Bengaluru, India.}
}

\headers{A Representation--Optimization Dichotomy, Lie-Algebraic Policy Optimization}{S.~K.C\ and V.~Mishra}

\begin{document}
\maketitle

\begin{abstract}
Structured reinforcement learning and stochastic optimization problems 
often involve parameters evolving on matrix Lie groups such as rotations 
and rigid-body transformations. We establish a sharp 
\emph{representation--optimization dichotomy} for 
Lie-algebra--parameterized Gaussian policy objectives in the Lie Group 
MDP class: the gradient Lipschitz constant $L(R)$---which governs 
step-size selection, convergence rates, and sample complexity of any 
first-order method applied to this class---is determined by the algebraic 
type of $\g$, uniformly over all objectives in the Lie Group MDP class, 
independently of the specific reward structure or transition kernel. Specifically: $L = \mathcal{O}(1)$ 
for compact $\g$ (e.g., $\so(n)$, $\su(n)$), and 
$L = \Theta(e^{2R})$ for $\g = \gl(n)$, with $\mathcal{O}(e^{2R})$ 
holding for all algebras with a hyperbolic element. The key analytical step is a lower bound showing that the exponential 
growth cannot be removed by cancellations between the exponential map and 
the objective, establishing the dichotomy as intrinsic to the algebra type.
At its core, this is a structural smoothness theorem for matrix-Lie-parameterized 
stochastic objectives, independent of the specific reinforcement-learning 
interpretation. The dichotomy has a direct algorithmic implication for compact algebras: 
radius-independent smoothness enables $\mathcal{O}(1/\sqrt{T})$ 
convergence with an $\mathcal{O}(n^2 J)$ Lie-algebraic projection step 
in place of $\mathcal{O}(d_{\g}^3)$ Fisher inversion. A Kantorovich 
alignment bound $\alpha \ge 2\sqrt{\kappa}/(\kappa+1)$ provides a 
computable diagnostic for when this Euclidean projection adequately 
surrogates full natural gradient inversion. Controlled experiments on $\SO(3)^J$ and $\SE(3)$ validate the 
theoretical predictions: radius-independent smoothness for compact 
algebras, polynomial Lipschitz growth for $\SE(3)$ (whose algebra 
contains no hyperbolic elements and lies outside the exponential 
regime), and alignment bounds across condition-number regimes. The projection step runs $1.1$--$1.7\times$ 
faster than Cholesky Fisher inversion at benchmarked scales, with the 
$\mathcal{O}(n^2 J)$ vs.\ $\mathcal{O}(d_{\g}^3)$ asymptotic 
advantage growing substantially at larger joint counts.
\end{abstract}

\begin{keywords}
structured stochastic optimization, matrix Lie algebras, natural gradient,
Lipschitz smoothness, nonconvex convergence, Fisher information, reinforcement learning
\end{keywords}

\begin{AMS}
90C26, 90C40, 65K10, 68T05, 22E60
\end{AMS}

\section{Introduction}
\label{sec:intro}

A recurring challenge in structured data-driven optimization is 
understanding how the geometry of the parameter space shapes the 
computational difficulty of the optimization problem. When model 
parameters lie on a matrix Lie group or its associated Lie algebra---as 
in geometric estimation, rotation-valued signal processing, trajectory 
optimization for mechanical systems, and structured reinforcement 
learning---the relevant algebraic structure is not merely a 
representational choice: it directly governs the growth of the gradient 
Lipschitz constant, the permissible step sizes of first-order methods, 
and the resulting convergence rates. The central question is: 
\emph{which policy parameterizations admit efficient gradient-based optimization, and what structural property of the parameterization determines this?}

This paper establishes a sharp answer within the Lie Group MDP 
class (Definition~\ref{def:lie_mdp}). 
When the underlying control system possesses geometric 
structure---as in robotic manipulation, legged locomotion, or autonomous 
navigation---the state and action spaces naturally evolve on matrix Lie 
groups such as $\SO(3)$ (rotations) and $\SE(3)$ (rigid-body motions). 
Parameterizing policies through the corresponding Lie algebras reduces 
the effective dimension from $n^2$ ambient coordinates to $d_{\g} = 
\dim(\g)$ intrinsic parameters, with potential benefits for both sample 
efficiency and computational cost. The resulting optimization problems 
are instances of structured nonconvex stochastic programming on matrix 
subspaces---a class arising broadly in scientific computing, including 
estimation on rotation groups~\cite{boumal2023introduction}, trajectory 
optimization for articulated systems~\cite{bullo2004geometric}, and 
signal processing on compact groups~\cite{hall2015lie}.

\textbf{Main result.} Theorem~\ref{thm:dichotomy} 
establishes a \emph{representation--optimization dichotomy}: the 
gradient Lipschitz constant $L(R)$ of any Lie-algebra-parameterized 
stochastic objective is $\mathcal{O}(1)$ if $\g$ is compact, and 
$\Theta(e^{2R})$ if $\g = \gl(n)$ (tight), with $\mathcal{O}(e^{2R})$ 
upper bound holding for all algebras with a hyperbolic element 
(including $\ssl(n)$; see Remark~\ref{rem:sln_exponent} for the 
separation). The matching lower bound is the main analytical contribution: 
it rules out the possibility that cancellations in the 
objective-to-parameter chain could yield a tighter uniform bound. 
This makes the result robust with respect to the choice of objective: within the Lie Group MDP class, only 
the algebra type controls $L(R)$, not the specific data-generating process, 
policy class, or transition kernel. The theorem thereby identifies 
which structural property of an algebraic parameter space determines 
the optimization landscape for this class of data-driven problems.

\textbf{Contributions.} The paper establishes one central theorem and 
develops two algorithmic consequences, each with independent content:

\begin{enumerate}[leftmargin=2em]

\item \textbf{Main theorem: Representation--optimization dichotomy.}
(Theorem~\ref{thm:dichotomy}) The gradient Lipschitz constant satisfies 
$L = \mathcal{O}(1)$ for compact $\g \subseteq \uu(n)$, and 
$L = \Theta(e^{2R})$ for $\g = \gl(n)$ (tight in both bounds). 
For all algebras containing a hyperbolic element (including $\ssl(n)$), 
the $\mathcal{O}(e^{2R})$ upper bound holds; the matching $\Omega(e^{2R})$ 
lower bound is established via the $\gl(n)$ witness. 
The key content is the lower bound: it rules out the 
possibility---not immediately obvious---that structural cancellations 
between the exponential map and the data-generating process could yield 
a tighter bound for any objective in the Lie Group MDP class 
(\S\ref{sec:appendix_smoothness}). The $\ssl(n)$ case is analyzed 
separately in Remark~\ref{rem:sln_exponent}: the best available lower bound 
for $\ssl(n)$ alone is $\Omega(e^{2c_n R})$ with $c_n \to 1$ as 
$n \to \infty$.

\item \textbf{Consequence 1: Convergence with non-restrictive iterate 
bound and alignment diagnostic.}
(Corollary~\ref{cor:convergence_lpg}, Proposition~\ref{prop:kappa_convergence}) 
The dichotomy enables $\mathcal{O}(1/\sqrt{T})$ convergence for 
compact $\g$; the bounded-iterates assumption is non-restrictive 
in this regime (the radius projection triggers on fewer than $2\%$ 
of iterations; Assumption~(A3)), a qualitative improvement 
over generic nonconvex SGD where iterate-boundedness is a substantive 
constraint. The Kantorovich inequality provides a computable lower 
bound $\alpha \ge 2\sqrt{\kappa}/(\kappa+1)$ on the alignment between 
each LPG step and the natural gradient direction. This yields a 
concrete practitioner diagnostic: estimate $\kappa$ from trajectory 
data; if $\kappa < 5$, Euclidean projection is an adequate 
natural-gradient surrogate; if $\kappa > 10$, explicit Fisher inversion 
is warranted (Remark~\ref{rem:estimate_alpha}).

\item \textbf{Consequence 2: Closed-form projection with quantified 
computational advantage.}
(Section~\ref{sec:algorithms}) Because $L$ is radius-independent for compact 
algebras, LPG requires no Fisher inversion: classical Lie-algebra 
projectors (Table~\ref{tab:complexity}) run in $\mathcal{O}(n^2 J)$ per 
step vs.\ $\mathcal{O}(d_{\g}^3)$ for exact natural gradient. Timing 
results appear in Section~\ref{sec:experiments} (measured speedup 
$1.1$--$1.7\times$ at $J \le 30$; extrapolated ${>}100\times$ at 
$J = 200$ by cost-model). Sample complexity inherits the 
$d_{\g}$-dimensional parameterization 
($\tilde{\mathcal{O}}(d_{\g}/\varepsilon^2)$), a consequence of the 
intrinsic-dimension reduction rather than an artifact 
(\S\ref{sec:appendix_sample_complexity}).

\end{enumerate}

\noindent\textbf{Supplementary material.} All proofs, lemma derivations, and 
implementation details are in the Supplementary Material (SM). 
References of the form SM~\S\ref{sec:appendix_convergence_proofs} 
(and similar) refer to sections of the SM, which is compiled as a 
separate document but submitted alongside the main paper.

\section{Related Work}
\label{sec:related}

Matrix Lie groups are the standard framework for structured estimation and 
control~\cite{hall2015lie,murray1994mathematical,bullo2004geometric,sola2021}.
Riemannian optimization generalizes gradient methods via retraction 
maps~\cite{absil2008optimization,boumal2023introduction,bonnabel2013stochastic,zhang2016first}.
Our setting is simpler: $\g$ is a \emph{linear} subspace of $\R^{n\times n}$,
so optimization is Euclidean with no retraction needed.
For compact groups with bi-invariant metric, LPG is update-rule-equivalent 
to Riemannian SGD~\cite{bonnabel2013stochastic} with identity retraction; 
no separate Riemannian baseline is benchmarked since the algorithms coincide. 
What is new is the structural analysis: the compact/non-compact dichotomy 
with matching lower bound (Theorem~\ref{thm:dichotomy}) and the 
$\kappa$-dependent alignment guarantee (Proposition~\ref{prop:kappa_convergence}) 
do not appear in prior Riemannian optimization theory for this class.

Natural gradient methods~\cite{amari1998natural,amari2000methods} underlie 
TRPO~\cite{kakade2002,schulman2015trust} and K-FAC/ACKTR~\cite{martens2015optimizing,wu2017scalable}.
K-FAC exploits layer-wise structure at cost $\mathcal{O}(d^{1.5})$; our projection 
exploits Lie-algebraic action-space structure, recovering the natural gradient 
exactly under isotropy at cost $\mathcal{O}(n^2 J)$ with no matrix inversion. 
The two are complementary.

Geometric deep learning~\cite{bronstein2021geometric,cohen2016group}, MDP 
homomorphisms~\cite{van2020mdp}, and equivariant 
policies~\cite{wang2022equivariant} reduce sample complexity via symmetry 
but do not address optimization cost or natural gradient quality.
We adapt the $\mathcal{O}(1/\sqrt{T})$ nonconvex SGD 
rate~\cite{ghadimi2013stochastic,agarwal2021theory} to a Lie-algebra 
constrained setting, reducing effective dimension from $n^2$ to $d_{\g}$.

Recent SIMODS papers establish the broader context for this work: 
Cayci, He, and Srikant~\cite{cayci2024natural} prove finite-time convergence for natural actor-critic 
in partially observable settings; Kim, Sanz-Alonso, and Yang~\cite{kim2024manifold} study Bayesian 
optimization on Riemannian manifolds with regret bounds; and Beckmann and Heilenk{\"o}tter~\cite{beckmann2024equivariant} 
establish a representer theorem for equivariant neural networks via Lie group 
symmetry, applied to inverse problems. 
The present paper contributes the missing structural analysis: a sharp dichotomy 
characterizing which parameterizations make first-order RL methods efficient.

\section{Mathematical Preliminaries}
\label{sec:prelim}

\subsection{Notation}

Throughout, $R_0$ denotes a fixed parameter-space radius bound ($\|\theta\|_F \le R_0$), while $R$ in Theorem~\ref{thm:dichotomy} is the running argument of the Lipschitz constant function $L(R)$; these are the same quantity when $R = R_0$.

We equip $\R^{n\times n}$ with the Frobenius inner product
$\ip{A}{B}_F = \tr(A^\top B)$, norm $\norm{A}_F = \sqrt{\tr(A^\top A)}$,
and operator norm $\norm{A}_{\mathrm{op}} = \sup_{\norm{v}_2=1} \norm{Av}_2$.
$P_U$ denotes the orthogonal projector onto a subspace $U$;
$\widetilde{\nabla}J = F^{-1}\nabla J$ the natural gradient.
A full notation table is in the Supplement~\S\ref{sec:appendix_implementation}, Table~S1.

\subsection{Matrix Lie groups and Lie algebras}

A \emph{matrix Lie group} $G \subset \GL(n,\R)$ is a closed subgroup; its
Lie algebra $\g = T_I G = \{X : \exp(tX) \in G \;\forall\, t\}$ is equipped
with the bracket $[X,Y] = XY - YX$. We use: $\so(n)$ (skew-symmetric,
$\dim = n(n{-}1)/2$), $\ssl(n)$ (traceless, $\dim = n^2{-}1$),
$\se(n)$ (rigid-body, $\dim = n(n{+}1)/2$), and $\gl(n) = \R^{n \times n}$.
See Hall~\cite{hall2015lie} for background.

The matrix exponential $\exp(X)=\sum_{k\ge 0} X^k/k!$ maps $\g$ into $G$
and is a local diffeomorphism near $0\in\g$~\cite{higham2008functions,iserles2000lie}.

We equip $\g$ with the Frobenius inner product 
$\ip{X}{Y}_{\g} = \tr(X^\top Y)$, $X,Y\in\g$, and fix an orthonormal 
basis $\{E_1,\ldots,E_{d_{\g}}\}$ ($\ip{E_i}{E_j}_F=\delta_{ij}$) 
throughout. Under this convention the coordinate metric tensor is the 
identity; other bases introduce a non-identity tensor but do not affect our results.

\subsection{Orthogonal projection onto Lie algebras}
\label{lem:projection_properties}

Since $\g$ is a closed linear subspace of $\R^{n\times n}$, the 
orthogonal projector $P_{\g}(M) = \arg\min_{X\in\g}\norm{M-X}_F^2$ 
satisfies the following standard Hilbert-space properties
\cite[Props.~3.58--3.61]{boumal2023introduction}:
\begin{enumerate}[label=(\roman*), leftmargin=2em]
  \item \emph{Linear}: $P_{\g}(\alpha M + \beta N) = \alpha P_{\g}(M) + \beta P_{\g}(N)$.
  \item \emph{Idempotent}: $P_{\g}(P_{\g}(M)) = P_{\g}(M)$.
  \item \emph{Nonexpansive}: $\norm{P_{\g}(M)}_F \le \norm{M}_F$.
  \item \emph{Self-adjoint}: $\ip{P_{\g}(M)}{N}_F = \ip{M}{P_{\g}(N)}_F$.
  \item \emph{Monotonicity}: $\ip{g}{P_{\g}(g)}_F = \norm{P_{\g}(g)}_F^2$.
\end{enumerate}
Property~(iv) implies \emph{gradient preservation}
$\nabla_{\g}f = P_{\g}(\nabla f)$ \cite[Prop.~3.61]{boumal2023introduction}.

Closed-form projectors: $P_{\so(n)}(M)=\tfrac12(M - M^\top)$;
$P_{\ssl(n)}(M)=M - \tfrac{1}{n}\tr(M)I$;
general $\g$: $P_{\g}(M) = \sum_{i}\ip{M}{E_i}_F\,E_i$.

\subsection{Lie-algebraic policies and Fisher information}

We consider Gaussian policies with mean actions parameterized in the Lie algebra:
\begin{equation}
\label{eq:gaussian_policy}
a = \mu_\theta(s) + \xi, \qquad \xi \sim \mathcal{N}(0, \sigma^2 I_{d_{\g}}),
\end{equation}
where $\mu_\theta(s) \in \g$ is the mean action, $\sigma > 0$ the
exploration scale, and $\Phi_k : \mathcal{S} \to \g$ state-dependent features.
We identify $\R^{d_{\g}} \cong \g$ via $\iota(x) = \sum_k x_k E_k$ (linear isometry).
States update as $R_{t+1} = R_t \exp(a_t)$.

The score function is
\begin{equation}
\label{eq:score_gaussian}
[\nabla_\theta \log \pi_\theta(a \mid s)]_k 
= \frac{1}{\sigma^2}\ip{a - \mu_\theta(s)}{\Phi_k(s)}_F,
\end{equation}
and the Fisher information matrix is
\begin{equation}
\label{eq:fisher_def}
F(\theta)
=
\E_{s\sim d^{\pi_\theta},\; a\sim\pi_\theta}
\Big[
\nabla_\theta \log \pi_\theta\,
(\nabla_\theta \log \pi_\theta)^\top
\Big],
\end{equation}
with discounted state visitation 
$d^{\pi_\theta}(s)=(1-\gamma)\sum_{t\ge 0}\gamma^t\Pr(s_t=s\mid\pi_\theta)$.
For Gaussian policies with isotropic noise, the Gaussian moment identity 
simplifies~\eqref{eq:fisher_def} to
\begin{equation}
\label{eq:fisher_explicit_main}
F_{kl}(\theta) = \frac{1}{\sigma^2}
\E_{s \sim d^{\pi_\theta}}\!\big[\ip{\Phi_k(s)}{\Phi_l(s)}_F\big].
\end{equation}
Approximate feature orthonormality ($\E_s[\ip{\Phi_k}{\Phi_l}_F] \approx \delta_{kl}$)
gives $F \approx \sigma^{-2} I_{d_\g}$, i.e.\ $\kappa \approx 1$.

\subsection{RL objective and standing assumptions}

The expected return $J(\theta) = \E_{\pi_\theta}[\sum_{t=0}^\infty \gamma^t r_t]$
has gradient given by the policy gradient theorem~\cite{kakade2002,sutton2018reinforcement,peters2008natural}:
\begin{equation}
\label{eq:policy_gradient}
\nabla_\theta J(\theta)
=
\E_{s\sim d^{\pi_\theta},\, a\sim\pi_\theta}
\left[Q^{\pi_\theta}(s,a)\,\nabla_\theta\log\pi_\theta(a\mid s)\right].
\end{equation}

We impose the following standing assumptions (all verified for the Gaussian Lie-algebraic policy class in Supplement~\S\ref{sec:appendix_smoothness}):
\begin{assumption}[Standing assumptions]
\label{ass:smoothness}\label{ass:bounded}\label{ass:bounded_iterates}%
\label{ass:unbiased}\label{ass:variance}\label{ass:stepsizes}%
\label{ass:bounded_actions}\label{ass:concentrability}
\begin{enumerate}[label=\textup{(A\arabic*)},leftmargin=2.5em,itemsep=1pt]
\item \emph{Smoothness}: $\|\nabla J(\theta)-\nabla J(\theta')\|_F \le L\|\theta-\theta'\|_F$.
\item \emph{Bounded optimum}: $J^* = \sup_\theta J(\theta) < \infty$.
\item \emph{Bounded iterates}: $\sup_t\|\theta_t\|_F \le B_\theta$, enforced by radius projection (Section~\ref{sec:algorithms}). Non-restrictive for compact algebras; essential for non-compact (Supplement~\S\ref{sec:supp_assumption_remarks}).
\item \emph{Unbiased gradient}: $\E[\widehat{\nabla}J(\theta)] = \nabla J(\theta)$.
\item \emph{Bounded variance}: $\E[\|\widehat{\nabla}J - \nabla J\|_F^2] \le \sigma_g^2$ ($\sigma_g$ distinct from exploration scale $\sigma$).
\item \emph{Step sizes}: $\sum_t\eta_t=\infty$, $\sum_t\eta_t^2<\infty$; for $\mathcal{O}(1/\sqrt{T})$ rate, $\eta_t = \eta/\sqrt{T}$ with $\eta \le 1/L$.
\item \emph{Bounded actions}: $\|a-\mu_\theta(s)\|_F \le B_a$ a.s.\ (standard truncation).
\item \emph{Concentrability}: $\sup_s d^{\pi_{\theta_1}}(s)/d^{\pi_{\theta_2}}(s) \le C_d$ for $\|\theta_1-\theta_2\|_F \le \delta$~\cite{agarwal2021theory}. For compact groups, $C_d$ is bounded by the mixing time; for non-compact groups, $C_d$ may grow with $R_0$.
\end{enumerate}
\end{assumption}
\noindent See Supplement~\S\ref{sec:supp_assumption_remarks} for discussion of when each assumption is binding and the explicit constants in the compact $+$ isotropic regime.

\begin{lemma}[Smoothness of RL objectives]
\label{lem:rl_smoothness}
Consider Gaussian policies~\eqref{eq:gaussian_policy} with features
$\norm{\Phi_k(s)}_F\le B_\Phi$, rewards 
$|r(s,a)|\le R_{\max}$, and 
parameters constrained to $\|\theta\|_F \le R_0$.
Under Assumption~\ref{ass:concentrability} and Assumption~\ref{ass:bounded_actions}, 
Assumption~\ref{ass:smoothness} holds with
\begin{equation}
\label{eq:lipschitz_constant}
L = \frac{4R_{\max} B_\Phi B_a C_d}{(1-\gamma)^3 \sigma^2} \cdot L_{\exp}(R_0),
\end{equation}
where the exponential factor is
\[
L_{\exp}(R_0) = 
\begin{cases}
\mathcal{O}(e^{2R_0}) & \text{general } \g, \\
\mathcal{O}(1) & \text{compact } \g \text{ (e.g., } \so(n), \mathfrak{su}(n)).
\end{cases}
\]
Explicit expressions appear in Section~\ref{sec:appendix_smoothness}.
\end{lemma}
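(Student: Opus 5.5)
The approach is to bound the Hessian of $J$ on the ball $\|\theta\|_F\le R_0$ and then read off the Lipschitz constant of $\nabla J$ from $L=\sup_{\|\theta\|_F\le R_0}\|\nabla^2 J(\theta)\|_{\mathrm{op}}$. Start from the policy gradient theorem~\eqref{eq:policy_gradient} and differentiate once more. This produces three kinds of terms:
\begin{enumerate}[label=(\alph*), leftmargin=2em]
\item the derivative of the score~\eqref{eq:score_gaussian};
\item the derivative of $Q^{\pi_\theta}$;
\item the derivative of the visitation measure $d^{\pi_\theta}$.
\end{enumerate}
Term (a) is immediate. Since $\mu_\theta$ is linear in $\theta$, the score has constant Jacobian $-\sigma^{-2}\langle\Phi_k,\Phi_l\rangle_F$, bounded by $B_\Phi^2/\sigma^2$.

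By (A7), the score is bounded by $B_aB_\Phi/\sigma^2$. With $|Q|\le R_{\max}/(1-\gamma)$, the standard resolvent/performance-difference argument (as in~\cite{agarwal2021theory}) handles (b) and (c). The derivative of $Q^{\pi_\theta}$ picks up one factor $1/(1-\gamma)$. The derivative of $d^{\pi_\theta}$ picks up another factor $1/(1-\gamma)$, with $C_d$ from (A8) controlling the change of measure between nearby policies. Collecting terms gives the prefactor $4R_{\max}B_\Phi B_aC_d/((1-\gamma)^3\sigma^2)$. The factor $4$ comes from summing the two cross terms and the two direct terms.

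The exponential factor enters because the transition kernel depends on $\theta$ through $R_{t+1}=R_t\exp(a_t)$. Reward and Q differences along trajectories must therefore be propagated through the exponential map, which is where the $\theta$-dependence of the state distribution enters. I would bound this using the derivative of $\exp$,
\[
D\exp(X)[H]=\int_0^1 e^{sX}He^{(1-s)X}\,ds,
\]
and its second derivative, a double integral of products of three factors $e^{s_iX}$ interleaved with $H$. Each $e^{sX}$ satisfies $\|e^{sX}\|_{\mathrm{op}}\le e^{s\|X\|_{\mathrm{op}}}$. The exponents inside each integral sum to at most $\|X\|$, and two such first-order chains appear in the Hessian. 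Hence $L_{\exp}(R_0)\lesssim e^{2\|X\|}$, and with $\|X\|\le\|\theta\|_F+B_a\lesssim R_0$ (absorbing constants) this gives $\mathcal O(e^{2R_0})$ for general $\g$.

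For compact $\g\subseteq\uu(n)$, every $X\in\g$ is skew-Hermitian, so $e^{sX}$ is unitary with operator norm exactly $1$. The integral representations then give $\|D\exp(X)\|\le1$ and $\|D^2\exp(X)\|\le 1$ uniformly in $X$, so $L_{\exp}=\mathcal O(1)$ independently of $R_0$.

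The main obstacle is making the chain rule through the trajectory rigorous. The Lipschitz dependence of $d^{\pi_\theta}$ and $Q^{\pi_\theta}$ on $\theta$ must be bounded without compounding the exponential factor over the horizon. If it compounds per step, one gets $e^{2tR_0}$ rather than $e^{2R_0}$. I would first try to route the entire $\theta$-dependence through the likelihood-ratio (score) representation, so that dynamics enter only via bounded rewards. The exponential factor then appears once, when comparing rewards at group elements reached by nearby actions. Discounting combined with (A8) then absorbs the horizon sum into the $(1-\gamma)^{-3}$ factor.
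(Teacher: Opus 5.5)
Your outline matches the paper's: a score bound, an Agarwal-type Hessian bound carrying $C_d$, two composed Fr\'echet factors of $\exp$, and unitarity for compact $\g$. It also shares the paper's two unsupported steps.

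\textbf{First gap: the exponential factor has no term to attach to.} You claim the transition kernel depends on $\theta$ through $R_{t+1}=R_t\exp(a_t)$. This is false. The kernel $P(\cdot\mid s,a)$ is $\theta$-free; the induced state chain depends on $\theta$ only through $\pi_\theta(a\mid s)$, whose mean is linear in $\theta$. Differentiate the trajectory likelihood twice, using the Gaussian score~\eqref{eq:score_gaussian} that you and the paper use under (A7):
\[
\nabla^2 J(\theta)=\sum_{t\ge 0}\gamma^t\,\E\Bigl[r_t\Bigl(\sum_{i\le t}\nabla^2\log\pi_\theta(a_i\mid s_i)+v_tv_t^\top\Bigr)\Bigr],
\qquad v_t=\sum_{i\le t}\nabla\log\pi_\theta(a_i\mid s_i).
\]
The scores are martingale differences, so $\E[(u^\top v_t)^2]\le (t+1)\beta_1^2$ for unit $u$. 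Here $\beta_1,\beta_2$ are your score and score-Jacobian bounds ($B_aB_\Phi/\sigma^2$ and $B_\Phi^2/\sigma^2$). Summing gives
\[
\norm{\nabla^2 J(\theta)}_{\mathrm{op}}\le \frac{R_{\max}(\beta_1^2+\beta_2)}{(1-\gamma)^2}.
\]
The map $a\mapsto s\exp(a)$ is never differentiated. It only determines which states are visited, and those enter only through $|r_t|\le R_{\max}$.

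This is exactly the ``route everything through the score'' fix you propose at the end. Once you adopt it, your paragraph on $D\exp$, $D^2\exp$ and ``two first-order chains'' corresponds to no term of the Hessian. The resulting bound is independent of $R_0$ and of the algebra, and needs no $C_d$. So the general-$\g$ case of the lemma holds only as slack, since $\mathcal O(1)\subseteq\mathcal O(e^{2R_0})$. Neither your argument nor the paper's Step~3 actually derives an exponential factor.

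A pathwise derivative through $\exp$ would produce such factors. However, it needs a differentiable reward and compounds over the horizon, as you feared. Note also that the paper's $e^{2R}$ witness uses the unbounded reward $-\tfrac12\norm{\exp(a)-I}_F^2$, which lies outside $|r|\le R_{\max}$.

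\textbf{Second gap: ``collecting terms'' does not give the displayed prefactor.} Every term accounting for the $\theta$-dependence of $Q^{\pi_\theta}$ or $d^{\pi_\theta}$ carries a second score factor. The dominant term therefore scales like $\beta_1^2=B_a^2B_\Phi^2/\sigma^4$, not $B_aB_\Phi/\sigma^2$. The paper's explicit-constant proof makes the same unjustified jump from $2R_{\max}B_{\mathrm{score}}^2C_d/(1-\gamma)^3$.

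A one-dimensional example shows the display fails when $L_{\exp}$ is an absolute constant, such as the paper's $2+\sqrt n$. Take:
\begin{itemize}
\item one state and $\gamma=0$;
\item $\g=\so(2)$ with unit basis element $E$ and $\Phi_1=E$;
\item noise truncated at $B_a=3\sigma$;
\item reward $r(xE)=R_{\max}\operatorname{sign}(x^2-\sigma^2)$.
\end{itemize}
Then $J(\theta)=R_{\max}\bigl(1-2\Pr(|\theta+\xi|<\sigma)\bigr)$. Differentiating the truncated CDF gives $J''(0)=4\varphi(1)R_{\max}/(Z\sigma^2)\approx 0.97\,R_{\max}/\sigma^2$, where $\varphi$ is the standard normal density and $Z=\Pr(|N(0,1)|\le 3)$. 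The displayed constant is $12R_{\max}L_{\exp}/\sigma$, which is smaller than this once $\sigma<0.02$ (taking $L_{\exp}=2+\sqrt2$).

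The formula survives only if $L_{\exp}$ absorbs a factor of order $B_aB_\Phi/\sigma^2$. At that point the honest statement is simply the $R_0$-independent bound above.
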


\begin{proof}
Chain rule through score bound, RL Hessian~\cite[Lem.~5]{agarwal2021theory},
and Fr\'{e}chet derivative of $\exp$ (Lemma~\ref{lem:frechet_bounds}).
Compactness of $\g \subseteq \uu(n)$ eliminates exponential growth via unitarity.
Full derivation in Supplement~\S\ref{sec:rl_smoothness_proof}.
\end{proof}

\noindent With these in place, Section~\ref{sec:geometric_inputs} introduces the alignment $\alpha$ and curvature $L$ that govern convergence.

\section{Lie Group Markov Decision Processes}
\label{sec:lie_mdps}

\subsection{Lie-group state spaces and homogeneous actions}

\begin{definition}[Lie Group MDP]
\label{def:lie_mdp}
A \emph{Lie Group Markov Decision Process} is a tuple
$(M,\mathcal{A},P,R,\gamma,G,\rho)$
in which:
\begin{itemize}[leftmargin=2em]
\item $M$ is a smooth manifold represented as a homogeneous space
      $M = G/H$ for a matrix Lie group 
      $G \subset \GL(n,\R)$ and closed subgroup $H$;
\item $\mathcal{A}$ is a finite or compact action space;
\item $P(\cdot\mid s,a)$ is a Markov transition kernel on $M$;
\item $R(s,a)$ is a bounded reward function with $|R(s,a)| \le R_{\max}$;
\item $\gamma \in [0,1)$ is the discount factor;
\item $\rho : G \times M \to M$ is the transitive group action, $\rho(g,s)=g\cdot s$.
\end{itemize}
\end{definition}

\begin{remark}[Relationship to prior MDP frameworks]
\label{rem:lie_mdp_scope}
The Lie Group MDP class encompasses the geometric structure implicit in 
prior policy gradient work: the Gaussian policy and natural gradient 
formulation of~\cite{kakade2002}, and the trust-region policy update 
of~\cite{schulman2015trust}, both operate on parameter spaces that 
are matrix Lie groups or their algebras when the action space has 
rotational or rigid-body structure. Definition~\ref{def:lie_mdp} 
makes this geometric structure explicit and uses it to derive 
algebra-type-dependent smoothness bounds.
\end{remark}

This structure captures a broad class of systems in robotics, geometric
mechanics, graphics, and navigation, including
$\SO(3)$ for 3D rotations, $\mathrm{SE}(3)$ for rigid-body motions, and
$\SO(3)^J$ for articulated multi-joint mechanisms.

\subsection{Intrinsic dimension and representation in coordinates}

Working in $\g$-coordinates reduces the effective parameter space from $n^2$ (ambient matrix entries) to $d_{\g} = \dim(\g)$, and this reduction is lossless within the Lie Group MDP class under $G$-equivariance (Proposition~\ref{prop:lossless}).

\begin{proposition}[Intrinsic dimension reduction]
\label{prop:dimension_reduction}
Since $\pi_{\theta}$ depends on $\theta$ only through 
$\mu_\theta(s) = \sum_k \theta_k \Phi_k(s) \in \g$, the effective 
parameter space is $d_{\g}$-dimensional. The score 
function~\eqref{eq:score_gaussian} gives 
$[\nabla_{\theta}\log \pi_{\theta}]_k 
= \sigma^{-2}\langle a - \mu_\theta(s), \Phi_k(s) \rangle_F$,
so the policy gradient lies in $\R^{d_{\g}} \cong \g$ (via $\iota(x)=\sum_k x_k E_k$).
\end{proposition}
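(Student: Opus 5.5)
The argument is short and direct. The only thing to show is that everything the policy sees factors through the linear map $\theta \mapsto \mu_\theta(s) = \sum_k \theta_k \Phi_k(s)$, which takes values in $\g$. Once that is established, the gradient claim follows from the score formula~\eqref{eq:score_gaussian} and the identification $\iota$.

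\textbf{Step 1 (factorization).} I take the Gaussian policy~\eqref{eq:gaussian_policy} with the linear mean $\mu_\theta(s)=\sum_{k=1}^{d_{\g}}\theta_k\Phi_k(s)$. Each $\Phi_k(s)$ lies in $\g$ and $\g$ is a linear subspace, so $\mu_\theta(s)\in\g$. The density of $\pi_\theta(\cdot\mid s)$ is $\mathcal N(\mu_\theta(s),\sigma^2 I_{d_{\g}})$, read on $\g\cong\R^{d_{\g}}$ through the isometry $\iota$. It therefore depends on $\theta$ only through $\mu_\theta(s)$, and $\theta$ ranges over $\R^{d_{\g}}$. This makes the parameter space $d_{\g}$-dimensional rather than $n^2$-dimensional.

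\textbf{Step 2 (score).} I differentiate $\log\pi_\theta(a\mid s) = -\tfrac{1}{2\sigma^2}\norm{a-\mu_\theta(s)}_F^2 + \text{const}$ with respect to $\theta_k$. Since $\iota$ is an isometry, the Euclidean norm on $\R^{d_{\g}}$ equals the Frobenius norm on $\g$, and $\partial_{\theta_k}\mu_\theta(s)=\Phi_k(s)$. Together these give $[\nabla_\theta\log\pi_\theta]_k=\sigma^{-2}\ip{a-\mu_\theta(s)}{\Phi_k(s)}_F$, which recovers~\eqref{eq:score_gaussian}.

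\textbf{Step 3 (gradient location).} By the policy gradient theorem~\eqref{eq:policy_gradient}, $\nabla_\theta J$ is an expectation of $Q^{\pi_\theta}(s,a)$ times the score. That is a $d_{\g}$-vector. Under the bounded-action assumption (A7) and bounded rewards, the expectation exists. Applying $\iota$ then places the gradient in $\g$, as $\sum_k[\nabla_\theta J]_k E_k$.

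The only point needing care is bookkeeping rather than analysis. The Gaussian noise has to be read in $\g$-coordinates, through $\iota$, and not in the ambient $\R^{n\times n}$. Otherwise the inner product in the score would involve the components of $a$ lying in $\g^\perp$. Since the $\Phi_k$ lie in $\g$, any such components pair to zero anyway, which is consistent with gradient preservation $\nabla_{\g}f=P_{\g}(\nabla f)$. I expect no real obstacle here.
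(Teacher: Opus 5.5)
Your proposal is correct and follows the same route as the paper, whose argument is given inline in the statement itself. The policy factors through the linear mean map $\theta \mapsto \sum_k \theta_k \Phi_k(s) \in \mathfrak{g}$; differentiating the Gaussian log-density in $\mathfrak{g}$-coordinates via $\iota$ recovers the score formula; and the policy gradient theorem then makes $\nabla_\theta J$ a $d_{\mathfrak{g}}$-vector, identified with $\mathfrak{g}$ through $\iota$. As in the paper, the $d_{\mathfrak{g}}$-dimensionality is by construction (one parameter per feature, $d_{\mathfrak{g}}$ features), so your Step~1 is bookkeeping rather than a derivation, which is all the statement needs.
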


\begin{proposition}[Losslessness of Lie-algebraic restriction]
\label{prop:lossless}
Consider a Lie Group MDP (Definition~\ref{def:lie_mdp}) with trivial isotropy 
($H = \{e\}$, $M = G$) whose transition kernel is $G$-equivariant: 
$P(g \cdot s' \mid g \cdot s, g \cdot a) = P(s'\mid s,a)$ for all $g \in G$.
Then for any Gaussian policy with $\mu_\theta(s) \in \R^{n \times n}$,
projecting the mean onto $\g$ does not degrade the optimal return:
\[
\sup_{\theta:\, \mu_\theta(s) \in \g} J(\theta) 
= \sup_{\theta:\, \mu_\theta(s) \in \R^{n \times n}} J(\theta).
\]
\end{proposition}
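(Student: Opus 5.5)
The inequality ``$\le$'' is immediate, since the constrained family $\{\theta:\mu_\theta(s)\in\g\}$ is a subset of the unconstrained one. The plan for ``$\ge$'' is a \emph{projection-coupling} argument. Given any $\theta$ with ambient mean $\mu_\theta(s)\in\R^{n\times n}$, I will build $\theta'$ with $\mu_{\theta'}(s)=P_{\g}(\mu_\theta(s))$ for every $s$. I will then show that $\pi_\theta$ and $\pi_{\theta'}$ induce the same distribution over state--reward trajectories, so that $J(\theta')=J(\theta)$. Taking the supremum over $\theta$ then gives the claim.

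\emph{Step 1 (realizability).} Since $\mu_\theta(s)=\sum_k\theta_k\Phi_k(s)$ and $P_{\g}$ is linear (property (i) of \S\ref{lem:projection_properties}), we have $P_{\g}(\mu_\theta(s))=\sum_k\theta_k P_{\g}(\Phi_k(s))$. So keeping the same coefficients with projected features $P_{\g}\Phi_k$ produces a $\g$-valued mean. If instead the feature class is fixed and $\g$-valued, I will argue that the projected mean lies in the same linear span.

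\emph{Step 2 (noise pushforward).} Write $a=P_{\g}(a)+P_{\g^\perp}(a)$. For isotropic Gaussian noise, the image of $\mathcal N(\mu,\sigma^2 I)$ under the orthogonal projector $P_{\g}$ is $\mathcal N(P_{\g}\mu,\sigma^2 I_{d_{\g}})$ on $\g$. This uses self-adjointness and idempotence, since $P_{\g}P_{\g}^\top=P_{\g}$. The truncation in (A7) commutes with this construction up to a radial-truncation convention, which I will fix so that the two laws still agree.

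\emph{Step 3 (dynamics see only $P_{\g}(a)$).} With $M=G$ and trivial isotropy, the action enters the transition only through the group element it moves the state by, via $\rho$. Equivariance, $P(g s'\mid gs,ga)=P(s'\mid s,a)$, lets me translate every state to the identity. The kernel is then determined by its restriction at $s=e$, where the action acts as an element of $T_eG=\g$. Hence the component $P_{\g^\perp}(a)$ is invisible to both $P$ and the reward, so $P(\cdot\mid s,a)=P(\cdot\mid s,P_{\g}(a))$. Combining this with Step 2 and inducting over $t$ shows that the state processes, and therefore the discounted returns, coincide.

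I expect Step 3 to be the main obstacle. Definition~\ref{def:lie_mdp} does not say explicitly how an ambient matrix action acts on $M$. The argument will therefore have to make precise the convention that actions act through $\rho$, with $R_{t+1}=R_t\exp(a_t)$ read with $a_t$ replaced by its $\g$-component, and then use equivariance to reduce everything to the identity fiber. I will first try to derive this from well-posedness: the state must remain in $G$, and $\exp(a)\in G$ for all small scalings of $a$ forces $a\in\g$. If that route fails, I will adopt it as the modelling convention implicit in the definition.
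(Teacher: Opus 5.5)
\textbf{There is a genuine gap in Step 3, and that step carries the whole argument.} $G$-equivariance only relates the kernel at different base points. Setting $g=s^{-1}$ gives $P(s'\mid s,a)=P(s^{-1}s'\mid e,\,s^{-1}\cdot a)$, but nothing constrains how $P(\cdot\mid e,a)$ depends on $a$ itself. So ``the action acts as an element of $T_eG$'' has no content until you have already decided how an ambient matrix acts.

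Concretely, take body-frame actions, on which $G$ acts trivially. The kernel $s'=s\exp\bigl(P_{\g}(a)+f(P_{\g^\perp}(a))\bigr)$ is $G$-equivariant for every map $f:\g^\perp\to\g$, yet it depends on $P_{\g^\perp}(a)$.

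The reward is constrained by no hypothesis at all. Take $\g=\so(3)$, dynamics $s'=s\exp(P_{\g}(a))$, and the bounded reward $R(s,a)=\min\{1,\max\{-1,\tr(a)\}\}$. Any ambient mean $cI$ with $c>0$ earns strictly positive expected reward, while every $\g$-valued action earns $0$. So $J(\theta')=J(\theta)$ fails, and so does the displayed equality of suprema if ambient actions are fed to the MDP as they are.

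Your well-posedness route does not rescue this. The requirement $\exp(a)\in G$ shows that ambient actions are inadmissible, not that the MDP responds to them through $P_{\g}(a)$. Hence $P(\cdot\mid s,a)=P(\cdot\mid s,P_{\g}(a))$ and $R(s,a)=R(s,P_{\g}(a))$ have to be imposed as a hypothesis.

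Once you impose it, your coupling is correct and gives the stronger pointwise identity $J(\theta')=J(\theta)$ at fixed $\sigma$. But then equivariance and trivial isotropy are never used, which shows that the convention, not the stated hypotheses, is doing the work. State it up front rather than present it as a consequence of equivariance.

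\textbf{Comparison with the paper.} The paper sidesteps this issue by declaring $\mathcal{A}=\g$ part of Definition~\ref{def:lie_mdp}. It then argues as follows:
\begin{itemize}
\item it takes an optimal deterministic policy and uses equivariance to get $a^*(g\cdot s)=g\cdot a^*(s)$;
\item it uses surjectivity of $\rho_{*,e}:\g\to T_eM$ for trivial $H$ to place $a^*(e)$, and hence every $a^*(s)$, in $\g$;
\item it lets $\sigma\to 0$, so that $\g$-centred Gaussians approach the optimal deterministic return, which dominates every ambient policy.
\end{itemize}
That route needs an optimal deterministic policy realizable by the mean class and the freedom to shrink $\sigma$. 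Yours, with the projection convention made explicit, avoids both and works at fixed $\sigma$.

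\textbf{Two smaller repairs.}
\begin{itemize}
\item In Step~1, with fixed ambient features, $P_{\g}\circ\mu_\theta$ need not be of the form $\mu_{\theta'}$. A single feature $E+S$ with $0\neq E\in\g$ and $0\neq S\in\g^\perp$ makes the constrained class $\{\theta=0\}$. You must therefore read the left-hand class as the projected-feature class; the ``fixed $\g$-valued features'' case you mention is trivial.
\item In Step~2, projecting an ambient-truncated Gaussian does not give the truncated $\g$-Gaussian. The truncation in (A7) has to act on $P_{\g}(\xi)$ alone.
\end{itemize}
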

The substantive content is a surjectivity argument under $G$-equivariance 
with trivial isotropy; the proof and scope discussion 
are in Supplement~\S\ref{sec:losslessness_proof}.

\section{Geometric Inputs: Alignment and Smoothness}
\label{sec:geometric_inputs}

Our convergence analysis depends on two geometric quantities. The \emph{alignment parameter} $\alpha \in (0,1]$ measures the cosine similarity between the LPG update direction and the natural gradient; the \emph{smoothness constant} $L(R_0)$ bounds the curvature of $J$ over the search domain $\{\|\theta\|_F \le R_0\}$. Both are verifiable in closed form for the Lie Group MDP class (Supplement~\S\ref{sec:appendix_alignment}, \S\ref{sec:appendix_smoothness}), and both admit data-driven estimates from trajectory samples without Fisher inversion.

\begin{assumption}[Alignment parameter]
\label{ass:alignment}
There exists $\alpha \in (0,1]$ such that for all $\theta$ along the 
optimization trajectory,
\[
\cos\!\left(
P_{\g}(\nabla J(\theta)),\, F(\theta)^{-1}\nabla_\theta J(\theta)
\right)
\ge \alpha.
\]
\end{assumption}

\noindent
The parameter $\alpha$ measures how well the Lie-algebraic projection 
approximates the natural gradient direction. When $\alpha = 1$, 
projection recovers the exact natural gradient; when $\alpha < 1$, 
projection incurs a directional bias.

\begin{assumption}[Smoothness constant]
\label{ass:smoothness_exp}
The Lipschitz constant $L$ of $\nabla J$ over the feasible region 
$\{\theta \in \g : \|\theta\|_F \le R_0\}$ satisfies $L \le L(R_0)$, 
where $L(R_0)$ depends on the algebra type:
\begin{enumerate}[label=(\roman*), leftmargin=2em]
\item \textbf{Compact} ($\g \subseteq \uu(n)$): $L(R_0) = \mathcal{O}(1)$.
\item \textbf{Non-compact with hyperbolic element} (e.g., $\ssl(n)$, $\gl(n)$): 
$L(R_0) = \mathcal{O}(e^{2R_0})$.
Non-compact algebras \emph{without} hyperbolic elements (e.g., $\se(n)$, 
whose elements have purely imaginary or zero eigenvalues) exhibit 
at most polynomial growth $L(R_0) = \mathcal{O}(R_0^k)$ for some $k \ge 1$; 
radius projection remains advisable but the exponential barrier does not apply.
\end{enumerate}
\end{assumption}

\paragraph{Verification.}
Both assumptions are verified with self-contained arguments: the 
alignment bound via the Kantorovich inequality 
(Supplement~\S\ref{sec:kantorovich}), and the smoothness rates via 
the Fr\'{e}chet derivative bound (Lemma~\ref{lem:frechet_bounds} and Lemma~\ref{lem:rl_smoothness}).
The supplement also covers sample complexity (\S\ref{sec:appendix_sample_complexity}), 
explicit Lipschitz constants with the lower-bound construction 
(\S\ref{sec:appendix_smoothness}), and numerical validation of the 
theoretical bounds on $\so(64)$, $\ssl(64)$, and $\gl(64)$ 
(\S\ref{sec:smoothness_validation}).

\begin{proposition}[Block-diagonal Fisher structure for $\SO(3)^J$]
\label{prop:block_fisher}
Under Gaussian Lie-algebraic policies~\eqref{eq:gaussian_policy} with 
orthonormal features $\{\Phi_k\}$ constructed from the standard 
$\so(3)$ basis (one basis per joint), the Fisher information matrix 
decomposes as
\[
F(\theta) = \sigma^{-2}\, \mathrm{diag}\!\big(F^{(1)}, \ldots, F^{(J)}\big),
\]
where each $3 \times 3$ block is
\[
F^{(j)}_{ik} = 
\E_{s \sim d^{\pi_\theta}}\!\big[\ip{\Phi_i^{(j)}(s)}{\Phi_k^{(j)}(s)}_F\big].
\]
Consequently, $\kappa(F) = \max_j \lambda_{\max}(F^{(j)}) \,/\, \min_j \lambda_{\min}(F^{(j)})$, which is bounded by $\max_j \kappa(F^{(j)})$ when the blocks share a common $\lambda_{\min}$---the typical regime when features are drawn from the same distribution across joints.
\end{proposition}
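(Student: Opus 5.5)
The plan is to start from the explicit Fisher formula \eqref{eq:fisher_explicit_main}, $F_{kl}(\theta)=\sigma^{-2}\E_{s\sim d^{\pi_\theta}}[\ip{\Phi_k(s)}{\Phi_l(s)}_F]$, and read off the block structure from how the features are indexed. With one $\so(3)$ basis per joint, the parameter index $k$ splits as a pair $(j,i)$ with $j\in\{1,\dots,J\}$ and $i\in\{1,2,3\}$. The feature $\Phi^{(j)}_i(s)$ takes values in the $j$-th summand of $\g=\so(3)^{\oplus J}$, embedded block-diagonally in $\R^{3J\times 3J}$. Under this embedding, matrices supported on different diagonal blocks have disjoint supports. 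So for $j\neq j'$ we get $\ip{\Phi^{(j)}_i(s)}{\Phi^{(j')}_{i'}(s)}_F=0$ pointwise in $s$, and the expectation vanishes as well. This gives $F=\sigma^{-2}\mathrm{diag}(F^{(1)},\dots,F^{(J)})$, where $F^{(j)}_{ik}$ is exactly the stated within-joint expectation.

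Next I will check that the Gaussian moment identity behind \eqref{eq:fisher_explicit_main} really applies. From the score \eqref{eq:score_gaussian}, $\E_{a}[\ip{\xi}{\Phi_k}\ip{\xi}{\Phi_l}]=\sigma^2\ip{\Phi_k}{\Phi_l}$ for isotropic $\xi\sim\mathcal N(0,\sigma^2 I_{d_\g})$, using the isometry $\iota$. The one point to verify is that $\xi$ is isotropic with respect to the same Frobenius inner product in which the per-joint blocks are orthogonal. This holds because the basis fixed in Section~\ref{sec:prelim} is Frobenius-orthonormal. If truncation under (A7) is in force, the identity becomes approximate. I would note this, or assume untruncated noise for the exact statement.

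For the condition number, each $F^{(j)}$ is a symmetric PSD Gram matrix (assume PD). The spectrum of a block-diagonal matrix is the union of the block spectra, and the common factor $\sigma^{-2}$ cancels in the ratio. Hence $\kappa(F)=\max_j\lambda_{\max}(F^{(j)})/\min_j\lambda_{\min}(F^{(j)})$. If every block has the same $\lambda_{\min}=\lambda_*$, the ratio is $\max_j\lambda_{\max}(F^{(j)})/\lambda_*=\max_j\kappa(F^{(j)})$, so the stated bound holds with equality.

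The main obstacle is interpretive rather than technical. The statement calls the features "orthonormal", but they are not pointwise orthonormal in general: the $s$-dependence can make $F^{(j)}\neq I$, and otherwise $\kappa=1$ would be trivial. I would make explicit that the essential hypothesis is \emph{joint-separation of supports}, meaning each $\Phi^{(j)}_i(s)$ lies in the $j$-th $\so(3)$ summand for all $s$. Block-diagonality follows from that alone, with no independence across joints needed. Without it, cross-joint blocks $\E_s[\ip{\Phi^{(j)}_i}{\Phi^{(j')}_{i'}}]$ could be nonzero.
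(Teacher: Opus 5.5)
Your proposal is correct and follows the same route as the paper's proof: substitute the joint-indexed features into the explicit Fisher formula, kill the cross-joint entries because matrices on different diagonal blocks are Frobenius-orthogonal, and read $\kappa(F)$ off the union of the block spectra. Your version is in fact tighter than the paper's. The paper takes the constant features $\Phi_k^{(j)}=E_k^{(j)}$, which forces every $F^{(j)}=I_3$ and hence $\kappa=1$, and it states $\kappa(F)\le\max_j\kappa(F^{(j)})$ without the shared-$\lambda_{\min}$ hypothesis. In general the reverse holds, $\kappa(F)\ge\max_j\kappa(F^{(j)})$, so your equality under a common $\lambda_{\min}$, derived from the joint-separated-support assumption, is the correct statement.
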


\begin{proof}
Cross-joint features are Frobenius-orthogonal; within 
each joint, the $\so(3)$ basis is orthonormal. The block-diagonal 
structure follows from~\eqref{eq:fisher_explicit_main}; 
the full calculation is in Supplement~\S\ref{sec:isotropy_mechanism}.
\end{proof}

This explains the empirical observation $\kappa \approx 2.5$: the 
block-diagonal structure prevents cross-joint coupling, and each 
$3 \times 3$ block has limited room for eigenvalue spread. For 
$\SE(3)$, the translation components break this structure, leading 
to the higher $\kappa \approx 2.8$ observed in 
Section~\ref{sec:se3_experiment}.

Under approximate isotropy with condition number 
$\kappa = \lambda_{\max}(F)/\lambda_{\min}(F)$, the alignment 
parameter satisfies
\[
\alpha \ge \frac{2\sqrt{\kappa}}{\kappa + 1},
\]
by the Kantorovich inequality~\cite{kantorovich1948functional} (Supplement~\S\ref{sec:appendix_alignment}).
Assumption~(A1) holds with the stated rates by 
Lemma~\ref{lem:rl_smoothness} and Lemma~\ref{lem:frechet_bounds}. Alternatively, 
$\alpha$ can be estimated directly from trajectory data (see 
Remark~\ref{rem:estimate_alpha} below).

\begin{remark}[Estimating $\alpha$ from data]
\label{rem:estimate_alpha}
In practice, $\alpha$ can be estimated without computing the full 
Fisher inverse: estimate $\hat{F}$ from trajectory samples, compute 
$\hat{\kappa} = \lambda_{\max}(\hat{F})/\lambda_{\min}(\hat{F})$, and 
set $\hat{\alpha} = 2\sqrt{\hat{\kappa}}/(\hat{\kappa}+1)$. 
Alternatively, estimate $\alpha$ directly by computing the cosine 
between $P_{\g}(\nabla J)$ and $\hat{F}^{-1}\nabla J$ on a subsample. 
Our experiments (Section~\ref{sec:experiments}) track both quantities throughout training.
\end{remark}

All convergence results in Section~\ref{sec:convergence} are stated in terms 
of $\alpha$ and $L(R_0)$ and hold regardless of the verification pathway.
The key question is: how does $L(R_0)$ depend on the algebra type $\g$?
The next section answers this with a sharp dichotomy.

\section{The Representation--Optimization Dichotomy}
\label{sec:dichotomy}

The following theorem is the central result of the paper. 
All convergence guarantees, alignment bounds, and computational 
conclusions in subsequent sections are consequences of it.

\paragraph{Why the lower bound is non-trivial.}
The upper bound (part~(iii)) is expected: it follows from a 
Fr\'{e}chet derivative estimate applied twice through 
the policy gradient chain rule (bounding each factor by $e^R$ then composing). The non-trivial content is the 
lower bound (part~(ii)). A natural hope is that cancellations 
in the policy gradient chain---between contributions from the reward 
function and from the matrix exponential map---could yield a tighter 
uniform bound even for non-compact algebras. 
The construction rules this out: it exhibits an explicit MDP within 
the class of Definition~\ref{def:lie_mdp} (on $\gl(n)$, with 
$H = \mathrm{diag}(1,0,\ldots,0)$ and a rank-one exponential reward) 
for which no such cancellation occurs and the Hessian of $J$ grows 
exactly as $e^{2R}$ along the direction of $H$; see 
Supplement~\S\ref{sec:appendix_smoothness}.

The only dependence of the objective on $\theta$ enters through the 
matrix exponential $e^{\theta}$ in the policy action distribution. 
Smoothness of $J$ therefore propagates from the Fr\'{e}chet derivative 
of $\exp$ via the chain rule: the algebraic growth of those derivatives 
determines the global smoothness constant, uniformly over all reward 
functions and transition kernels in the class.

\begin{theorem}[Representation--Optimization Dichotomy]
\label{thm:dichotomy}
Let $\g \subset \R^{n \times n}$ be a matrix Lie algebra.
Within the Lie Group MDP class of Definition~\ref{def:lie_mdp}, with Gaussian 
policies~\eqref{eq:gaussian_policy} satisfying 
Assumption~\ref{ass:bounded_actions} and Assumption~\ref{ass:concentrability}, let $L(R)$ denote the 
gradient Lipschitz constant of the policy objective $J$ over 
$\{\theta \in \g : \|\theta\|_F \le R\}$.
Then:
\[
L(R) = \begin{cases}
\mathcal{O}(1) & \text{if } \g \subseteq \uu(n) \text{ (compact)}, \\[4pt]
\Theta(e^{2R}) & \text{if } \g = \gl(n) \text{ (tight: upper and lower bounds match)}, \\[4pt]
\mathcal{O}(e^{2R}) & \text{if } \g \text{ contains a hyperbolic element, } \g \neq \gl(n)
\end{cases}
\]
with the matching $\Omega(e^{2R})$ lower bound established via the 
$\gl(n)$ witness MDP (Supplement~\S\ref{sec:appendix_smoothness}). 
For $\ssl(n)$, the best available lower bound is $\Omega(e^{2c_n R})$ 
with $c_n = \sqrt{(n-1)/n} \to 1$; see Remark~\ref{rem:sln_exponent} for 
the separation. The compact bound is tight (the exponential factor is 
absent, not merely bounded).\end{theorem}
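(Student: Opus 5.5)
The plan has three parts: the two upper bounds (compact and hyperbolic/general), which follow from Lemma~\ref{lem:rl_smoothness} via the Fr\'echet-derivative bounds of Lemma~\ref{lem:frechet_bounds}, and the $\gl(n)$ lower bound, which needs an explicit witness MDP.

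\textbf{Upper bounds.} Write $J(\theta)$ through the policy gradient formula~\eqref{eq:policy_gradient}. The parameter enters the trajectory distribution only through the state update $R_{t+1}=R_t\exp(a_t)$, with $a_t$ Gaussian around $\mu_\theta(s_t)$. Differentiating $\nabla J$ once more gives two kinds of terms. The first kind comes from differentiating the score~\eqref{eq:score_gaussian} and $Q^{\pi_\theta}$; these are controlled by $B_\Phi,B_a,\sigma^{-2}$ and $(1-\gamma)^{-3}$ exactly as in~\cite[Lem.~5]{agarwal2021theory}, with $C_d$ absorbing the change of visitation measure. The second kind comes from the dependence of the transition on $\exp$, and these contribute the factor $L_{\exp}(R)$ in~\eqref{eq:lipschitz_constant}. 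Here I would use the integral representation
\[
D\exp(X)[H]=\int_0^1 e^{sX}He^{(1-s)X}\,ds ,
\]
which gives $\|D\exp(X)[H]\|\le e^{\|X\|}\|H\|$, together with the analogous double-integral bound $e^{\|X\|}\|H\|\|K\|$ for the second derivative. Composing two such factors along the chain rule yields $\mathcal{O}(e^{2R})$ for any $\g$, in particular any algebra containing a hyperbolic element.

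For $\g\subseteq\uu(n)$, every $e^{sX}$ is unitary. In the integral representation each $\|e^{sX}\|_{\mathrm{op}}=1$, so the first and second Fr\'echet derivatives are bounded by constants independent of $\|X\|$. Hence $L_{\exp}=\mathcal{O}(1)$, and $L(R)$ equals the radius-free prefactor in~\eqref{eq:lipschitz_constant}; this also shows the exponential factor is genuinely absent rather than merely bounded.

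\textbf{Lower bound for $\gl(n)$.} I would build the witness MDP described before the theorem. Take $G=\GL(n)$ with $M=G$ and a single step, or equivalently $\gamma$ small with an absorbing state. Use features so that $\mu_\theta=\theta$ is state-independent, set $H=\mathrm{diag}(1,0,\dots,0)$, and take the rank-one reward $r(g)=\phi(\langle e_1e_1^\top, g\rangle_F)$, where $\phi$ is bounded and smooth with $\phi''$ nonvanishing on the relevant range. To stay within $|R|\le R_{\max}$ while keeping growth, I would choose $\phi$ as a normalized sinusoid or a similar bounded function whose argument carries the exponential scale. Restrict $\theta=tH$. Since $H$ is diagonal, $\exp(tH+\xi)$ restricted to the $(1,1)$ entry behaves like $e^{t+\xi_{11}}$ to leading order, so the second directional derivative of $J$ along $H$ contains the term $\E[\phi''(\cdot)\,e^{2(t+\xi_{11})}]$, plus a lower-order $e^{t}$ term. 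Evaluating at $t=R$ (where $\|tH\|_F=R$) and using truncation of $\xi$ by $B_a$ gives $|\partial_t^2 J|\ge c\,e^{2R}$. Because $L(R)\ge\sup|\partial_t^2 J|$, this yields $L(R)=\Omega(e^{2R})$, which matches the upper bound.

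\textbf{Main obstacle.} The hard step is making the lower-bound computation rigorous without cancellation. The boundedness of the reward competes with exponential growth: a naive bounded $\phi$ gives $\phi''$ evaluated at huge arguments, which may average out under the Gaussian. I would first try choosing $\phi$ depending on $R$ (e.g.\ $\phi(x)=R_{\max}\cos(x e^{-R}\cdot\omega)$-type rescalings are forbidden since they kill growth), so instead I would choose the reward supported in a window where the $(1,1)$ entry is near $e^{R}$ and verify that the $\phi'$ term (order $e^{R}$) and the $\phi''$ term (order $e^{2R}$) cannot cancel, by explicit one-dimensional integration against the truncated Gaussian. Finally, for $\ssl(n)$ I would defer to Remark~\ref{rem:sln_exponent}: the trace-free constraint forces $\theta=t\,\mathrm{diag}(1,-\tfrac1{n-1},\dots)$ normalized, giving the exponent $c_n=\sqrt{(n-1)/n}$.
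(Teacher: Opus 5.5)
Your upper-bound argument is the paper's: two composed Fr\'echet-derivative factors from Lemma~\ref{lem:frechet_bounds} fed into Lemma~\ref{lem:rl_smoothness}, with unitarity removing them when $\g\subseteq\uu(n)$. Your normalization $c_n=\sqrt{(n-1)/n}$ for $\ssl(n)$ is also correct. The gap is the lower bound, and the obstacle you flag is fatal rather than technical: no reward with $|r|\le R_{\max}$ can produce $\Omega(e^{2R})$ curvature under a Gaussian policy. Take your single-step witness with $\mu_\theta=\theta$, put $g(t)=J(tH)=\E_\xi[r(\exp(tH+\xi))]$, and move both $t$-derivatives onto the Gaussian density. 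Writing $\xi_{11}=\langle\xi,H\rangle_F$,
\[
g''(t)=\E_\xi\Bigl[r\bigl(\exp(tH+\xi)\bigr)\Bigl(\frac{\xi_{11}^2}{\sigma^4}-\frac{1}{\sigma^2}\Bigr)\Bigr],
\qquad
|g''(t)|\le\frac{2R_{\max}}{\sigma^2}
\]
for every $t$. This holds whether $\phi$ is a window, a sinusoid, or depends on $R$; a truncation kernel that is smooth in $\theta$ only changes the constant. The pathwise terms $\phi''e^{2(t+\xi_{11})}$ and $\phi'e^{t+\xi_{11}}$ that you hope will not cancel are exactly what this integration by parts collapses.

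The same mechanism shows that the ``second kind'' of term in your upper bound is spurious. The score-function Hessian bound of~\cite{agarwal2021theory} never differentiates the transition kernel. With bounded rewards and the score bounds from (A7), it already gives an $L$ independent of both $R$ and $\g$. Your upper bounds are therefore true but loose. A hard truncation of $\xi$ does not rescue the plan either. It adds boundary terms proportional to the reward's slope, so $L$ is no longer controlled by $R_{\max}$ at all. A steep reward would then inflate $L$ for $\g\subseteq\uu(n)$ as well, breaking \eqref{eq:lipschitz_constant} and the $\mathcal{O}(1)$ half of the theorem instead.

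The paper obtains $|g''(R)|\ge e^{2R}$ only by leaving the bounded-reward setting. Its witness uses $r(a)=-\tfrac12\|\exp(a)-I\|_F^2$ with untruncated noise $\xi_j\sim\mathcal{N}(0,\sigma^2)$. It computes $g(t)=-\tfrac12\bigl(e^{2t+2\sigma^2}-2e^{t+\sigma^2/2}+1\bigr)$ from log-normal moments and differentiates to get $g''(t)=-\bigl(2e^{2t+2\sigma^2}-e^{t+\sigma^2/2}\bigr)$. Consistent with the display above, this growth comes from $|r|$ being of order $e^{2R}$ on typical actions, not from an uncancellable curvature of $\exp$. So the only way to finish your lower bound is to adopt this unbounded quadratic reward. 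That violates $|R(s,a)|\le R_{\max}$ in Definition~\ref{def:lie_mdp} and the truncation in (A7), exactly as the paper's witness does. Within the class as literally stated (bounded rewards, a policy density smooth in $\theta$), no witness for the $\Omega(e^{2R})$ half of the $\gl(n)$ claim exists.
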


\begin{proof}
The compact $\mathcal{O}(1)$ upper bound follows from unitarity of 
$\exp(\theta)$ for $\theta \in \uu(n)$: all exponential factors in 
the Fr\'{e}chet derivative chain are eliminated 
(Proposition~\ref{prop:compact_advantage}).

The non-compact $\mathcal{O}(e^{2R})$ upper bound follows from 
$\|D_\theta - D_{\theta'}\|_{\mathrm{op}} \le \sqrt{n}\,e^{R}\|\theta - \theta'\|_F$ 
(Lemma~\ref{lem:frechet_bounds}(ii)); composing two such factors through 
the policy gradient chain rule (Lemma~\ref{lem:rl_smoothness}) yields 
$\mathcal{O}(e^{2R})$.

The non-compact $\Omega(e^{2R})$ lower bound (Proposition~\ref{prop:lower_bound}) 
is the nontrivial part: the witness is a single-state MDP on 
$\gl(n)$ with $H = \mathrm{diag}(1,0,\ldots,0)$ (so $\|H\|_F = 1$) 
and reward $r(a) = -\tfrac{1}{2}\|\exp(a)-I\|_F^2$. 
Since $\exp(tH) = \mathrm{diag}(e^t,1,\ldots,1)$, the objective 
restricts to $g(t) = -\tfrac{1}{2}(e^t-1)^2$ and $g''(t) = -(2e^{2t}-e^t)$; 
evaluating at $t = R$ gives 
$|g''(R)| \ge e^{2R}$ for all $R \ge 0$ 
(Supplement~\S\ref{sec:appendix_smoothness}). 
The $\gl(n)$ witness achieves the tight $\Omega(e^{2R})$ exponent 
because $\lambda_{\max}(H) = \|H\|_F = 1$.  For $\ssl(n)$, the 
trace-zero constraint forces $\lambda_{\max}(H) \le \sqrt{(n{-}1)/n} < 1$ 
for any unit-Frobenius-norm $H$, so the analogous construction yields 
only $\Omega(e^{2\sqrt{(n-1)/n}\,R})$; see Remark~\ref{rem:sln_exponent}.
\end{proof}

\begin{remark}[$\ssl(n)$ separation]
\label{rem:sln_exponent}
The $\mathcal{O}(e^{2R})$ upper bound holds for all algebras with a hyperbolic 
element; the matching $\Omega(e^{2R})$ lower bound requires $\gl(n)$ as witness. 
For $\ssl(n)$ alone, the best lower bound is $\Omega(e^{2c_n R})$ with 
$c_n = \sqrt{(n-1)/n} \to 1$; practitioners should treat $\Theta(e^{2R})$ as 
operative. Full analysis in Supplement~\S\ref{sec:sln_separation}.
\end{remark}

\begin{remark}[Consequence for algorithm design]
\label{rem:algorithm_design_consequence}
Theorem~\ref{thm:dichotomy} partitions 
algebra types by optimization difficulty: for 
compact algebras, gradient optimization requires no radius 
projection, no shrinking step sizes, and admits 
$\mathcal{O}(1/\sqrt{T})$ convergence with explicit constants 
(Corollary~\ref{cor:convergence_lpg}). For non-compact algebras, radius 
projection is essential: without it, $L(R)$ grows without bound 
and any fixed step size eventually violates $\eta \le 1/L$ 
(Section~\ref{sec:se3_experiment}). The Lie-Projected Policy Gradient 
algorithm (LPG) implementing these design choices is presented in Section~\ref{sec:algorithms}.
\end{remark}

\section{Convergence of Lie-Projected Policy Gradient}
\label{sec:convergence}

We analyze convergence of the LPG projected gradient method; 
all results are consequences of Theorem~\ref{thm:dichotomy}.

\subsection{Algorithmic update}

Let $\theta_t \in \g$ and $g_t = \widehat{\nabla}J(\theta_t)$.
Since $\g$ is a closed linear subspace, $P_{\g}$ is globally defined 
and feasibility is preserved without retraction. The update is
\begin{equation}
\label{eq:proj_update}
\theta_{t+1} = P_{\g}(\theta_t + \eta_t g_t) = \theta_t + \eta_t P_{\g}(g_t),
\end{equation}
where the second equality uses linearity and idempotence of $P_{\g}$
(so projection affects only the search direction).

\subsection{One-step progress}

The following lemma quantifies the expected improvement in $J$ at each step.
The filtration $\mathcal{F}_t$ includes all randomness from the policy,
environment, and stochastic gradient estimator up to iteration $t$.

\begin{lemma}[Progress inequality]
\label{lem:progress}
Under Assumption~\ref{ass:smoothness} and for $\eta_t \le 1/L$,
\[
\E[J(\theta_{t+1}) \mid \mathcal{F}_t]
\ge
J(\theta_t)
+ \frac{\eta_t}{2} \norm{P_{\g}(\nabla J(\theta_t))}_F^2
- \frac{L \eta_t^2 \sigma_g^2}{2}.
\]
\end{lemma}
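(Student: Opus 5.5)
This is the standard ascent lemma for $L$-smooth functions, combined with the unbiasedness and bounded variance of the stochastic gradient. The projection enters only through properties (i)--(v) of $P_{\g}$ and the fact that $\theta_t$ stays in $\g$.

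\textbf{Step 1 (quadratic lower bound).} By Assumption~(A1), $\nabla J$ is $L$-Lipschitz along the segment from $\theta_t$ to $\theta_{t+1}$. This segment lies in $\g$, because $\g$ is a linear subspace and the update~\eqref{eq:proj_update} gives $\theta_{t+1}-\theta_t=\eta_t P_{\g}(g_t)\in\g$. The descent lemma, applied in its ascent form, therefore gives
\[
J(\theta_{t+1}) \ge J(\theta_t) + \eta_t\ip{\nabla J(\theta_t)}{P_{\g}(g_t)}_F - \frac{L\eta_t^2}{2}\norm{P_{\g}(g_t)}_F^2 .
\]

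\textbf{Step 2 (conditional expectation).} Condition on $\mathcal{F}_t$, so that $\theta_t$ is fixed. For the linear term, use linearity of $P_{\g}$ and Assumption~(A4) to get $\E[P_{\g}(g_t)\mid\mathcal{F}_t]=P_{\g}(\nabla J(\theta_t))$. Self-adjointness and idempotence (properties (iv) and (ii), i.e.\ monotonicity (v)) then turn the linear term into $\eta_t\norm{P_{\g}(\nabla J(\theta_t))}_F^2$.

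For the quadratic term, write $g_t=\nabla J(\theta_t)+e_t$ with $\E[e_t\mid\mathcal{F}_t]=0$. Expanding the square and using that the cross term vanishes in expectation,
\[
\E\big[\norm{P_{\g}(g_t)}_F^2\mid\mathcal{F}_t\big]=\norm{P_{\g}(\nabla J(\theta_t))}_F^2+\E\big[\norm{P_{\g}(e_t)}_F^2\mid\mathcal{F}_t\big].
\]
Nonexpansiveness (iii) together with Assumption~(A5) bounds the last term by $\sigma_g^2$.

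\textbf{Step 3 (combine).} Putting the two terms together gives
\[
\E[J(\theta_{t+1})\mid\mathcal{F}_t]\ge J(\theta_t)+\Big(\eta_t-\frac{L\eta_t^2}{2}\Big)\norm{P_{\g}(\nabla J(\theta_t))}_F^2-\frac{L\eta_t^2\sigma_g^2}{2}.
\]
Since $\eta_t\le 1/L$, we have $L\eta_t^2/2\le\eta_t/2$, so the coefficient $\eta_t-L\eta_t^2/2$ is at least $\eta_t/2$. This yields the claimed inequality.

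\textbf{Main obstacle.} The only delicate point is that (A5) must be read conditionally, i.e.\ as $\E[\|e_t\|_F^2\mid\mathcal{F}_t]\le\sigma_g^2$ at the current iterate. The same applies to (A4) for the cross-term cancellation. I would state this interpretation explicitly, with $g_t$ drawn from fresh trajectories given $\mathcal{F}_t$. I would also note that (A1) is only needed on the feasible region of radius $R_0$ (or $B_\theta$), where the dichotomy of Theorem~\ref{thm:dichotomy} supplies the constant $L$. This requires both $\theta_t$ and $\theta_{t+1}$ to remain in that ball, which is guaranteed by (A3).
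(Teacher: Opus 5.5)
Your proposal is correct and follows the paper's proof step for step: the ascent lemma along the step $\eta_t P_{\g}(g_t)$; unbiasedness plus linearity of $P_{\g}$ and monotonicity (v) for the linear term; the variance bound with nonexpansiveness for the quadratic term; and $1 - L\eta_t/2 \ge 1/2$ to close. Your explicit split $P_{\g}(g_t) = P_{\g}(\nabla J(\theta_t)) + P_{\g}(e_t)$, with the cross term vanishing, is the precise justification for the paper's compressed claim $\E[\norm{P_{\g}(g_t)}_F^2 \mid \mathcal{F}_t] \le \norm{P_{\g}(\nabla J(\theta_t))}_F^2 + \sigma_g^2$, since nonexpansiveness applied to $g_t$ alone only gives $\norm{\nabla J(\theta_t)}_F^2 + \sigma_g^2$; likewise, your conditional reading of (A4)--(A5) is exactly what the paper uses tacitly.
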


\begin{proof}
Apply the descent lemma to the projected update 
$\theta_{t+1} - \theta_t = \eta_t P_{\g}(g_t)$, use unbiasedness 
(Assumption~\ref{ass:unbiased}), 
monotonicity (property~(v) of $P_{\g}$, Lemma~\ref{lem:projection_properties}), and the variance 
bound (Assumption~\ref{ass:variance}). The condition $\eta_t \le 1/L$ ensures 
the quadratic term is controlled. See 
Supplement~\S\ref{sec:appendix_convergence_proofs} for the full calculation.
\end{proof}

\subsection{Main convergence result}

\begin{corollary}[Convergence of LPG for nonconvex objectives]
\label{cor:convergence_lpg}
Suppose Assumptions~(A1)--(A6)
hold and $\eta_t \le 1/L$ for all $t$.
\begin{enumerate}[label=(\roman*), leftmargin=2em]
\item The sequence $\{J(\theta_t)\}$ converges almost surely. (Bounded iterates
      per Assumption~\ref{ass:bounded_iterates} ensure that $J(\theta_t)$ is well-defined
      and the Lipschitz constant $L$ is valid along the entire trajectory.)
\item The projected gradient norms satisfy
\[
\sum_{t=0}^\infty \eta_t \E\big[\norm{P_{\g}(\nabla J(\theta_t))}_F^2\big]
< \infty.
\]
In particular,
$\liminf_{t \to \infty} \E[\norm{P_{\g}(\nabla J(\theta_t))}_F^2] = 0$.
\item If $\eta_t = \eta/\sqrt{T}$ for all $t < T$ (constant step size
optimized for horizon $T$) with $\eta \le 1/L$, then
\[
\E\!\left[
\frac{1}{T} \sum_{t=0}^{T-1} \norm{P_{\g}(\nabla J(\theta_t))}_F^2
\right]
\le
\frac{2(J^* - J(\theta_0))}{\eta \sqrt{T}}
+ \frac{L \eta \sigma_g^2}{\sqrt{T}}
= \mathcal{O}\!\left(\frac{1}{\sqrt{T}}\right).
\]
\end{enumerate}
\end{corollary}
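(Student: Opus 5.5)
The plan is to take Lemma~\ref{lem:progress} as the basic one-step inequality and run the standard stochastic-approximation and telescoping arguments, working throughout with the \emph{reward-maximization} form of $J$ and using (A2) to control the sum. Take total expectation in Lemma~\ref{lem:progress} and set $\Delta_t = J^* - \E[J(\theta_t)] \ge 0$. This gives
\[
\Delta_{t+1} \le \Delta_t - \frac{\eta_t}{2}\E\big[\norm{P_{\g}(\nabla J(\theta_t))}_F^2\big] + \frac{L\eta_t^2\sigma_g^2}{2},
\]
which is valid for every $t$ because $\eta_t \le 1/L$. Bounded iterates (A3) guarantee that the Lipschitz constant $L$ of (A1) applies along the entire trajectory, so the lemma can be invoked at every step.

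For part~(i), I would apply the Robbins--Siegmund almost-supermartingale theorem to $V_t = J^* - J(\theta_t) \ge 0$. The conditional form of Lemma~\ref{lem:progress} reads
\[
\E[V_{t+1}\mid\mathcal{F}_t] \le V_t - \tfrac{\eta_t}{2}\norm{P_{\g}(\nabla J(\theta_t))}_F^2 + \tfrac{L\sigma_g^2}{2}\eta_t^2 .
\]
The error terms are summable because $\sum_t \eta_t^2 < \infty$ by (A6). Robbins--Siegmund then yields two conclusions: $V_t$ converges almost surely to a finite limit, which gives convergence of $J(\theta_t)$; and $\sum_t \eta_t \norm{P_{\g}(\nabla J(\theta_t))}_F^2 < \infty$ almost surely.

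For part~(ii), I would telescope the expected inequality from $0$ to $T-1$:
\[
\sum_{t<T}\frac{\eta_t}{2}\E\norm{P_{\g}(\nabla J(\theta_t))}_F^2 \le \Delta_0 - \Delta_T + \frac{L\sigma_g^2}{2}\sum_{t<T}\eta_t^2 \le \Delta_0 + \frac{L\sigma_g^2}{2}\sum_t \eta_t^2 .
\]
The right-hand side is finite and independent of $T$, so the series converges. Since $\sum_t \eta_t = \infty$, a positive liminf of the expected squared norms would make the series diverge, so $\liminf_t \E\norm{P_{\g}\nabla J(\theta_t)}_F^2 = 0$.

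For part~(iii), I would run the same telescoping with the constant step $\eta_t = \eta/\sqrt T$:
\[
\frac{\eta}{2\sqrt T}\sum_{t<T}\E\norm{\cdot}^2 \le J^* - J(\theta_0) + \frac{L\sigma_g^2\eta^2}{2}.
\]
Dividing by $\eta\sqrt T/2$ gives exactly the stated bound, namely $2(J^*-J(\theta_0))/(\eta\sqrt T) + L\eta\sigma_g^2/\sqrt T$. The condition $\eta/\sqrt T \le \eta \le 1/L$ holds automatically, so Lemma~\ref{lem:progress} applies at every step.

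The main obstacle is the measure-theoretic detail in part~(i). Robbins--Siegmund needs $V_t$ to be nonnegative and adapted, which (A2) provides, and it needs the conditional form of Lemma~\ref{lem:progress}, including integrability of $J(\theta_t)$. Integrability follows from (A3) together with boundedness of $J$ on the ball $\{\|\theta\|_F\le B_\theta\}$, since continuity and compactness make $J$ bounded there. Everything else is routine arithmetic.
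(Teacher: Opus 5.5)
Your proposal is correct and follows the paper's proof essentially step for step. Like the paper, you telescope the totally-expected form of Lemma~\ref{lem:progress} with $J(\theta_T)\le J^*$ for parts (ii) and (iii), and apply Robbins--Siegmund to $J^*-J(\theta_t)\ge 0$ for part (i); your explicit deduction of the $\liminf$ claim from $\sum_t\eta_t=\infty$ and your integrability remark fill in details the paper leaves implicit.
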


\begin{proof}[Proof sketch]
Sum Lemma~\ref{lem:progress} over $t=0,\ldots,T-1$, take expectations, and use $J(\theta_T)\le J^*$.
Part~(i) uses supermartingale convergence~\cite[Thm.~4.1]{bottou2018optimization}.
Part~(iii) uses $\sum_{t<T}\eta_t = \eta\sqrt{T}$ exactly.
Full proof in Supplement~\S\ref{sec:appendix_convergence_proofs}.
\end{proof}

\begin{remark}[Sample complexity]
\label{rem:sample_complexity_convergence}
The $d_\g$-dimensional parameterization gives sample complexity
$\tilde{\mathcal{O}}(d_\g/\varepsilon^2)$ vs.\ $\tilde{\mathcal{O}}(n^2/\varepsilon^2)$ ambient.
Full derivation in Supplement~\S\ref{sec:appendix_sample_complexity}.
\end{remark}

\subsection{Convergence under approximate isotropy}

The rate in Corollary~\ref{cor:convergence_lpg} is independent of $\alpha$: since 
$\nabla J(\theta) \in \g$ for Lie-algebraic policies, $P_{\g}(\hat{g}_t)$ is 
an unbiased estimator of $\nabla_{\g}J(\theta_t)$ and the standard 
$\mathcal{O}(1/\sqrt{T})$ rate holds regardless of Fisher geometry.
The alignment parameter $\alpha$ enters through the convergence 
\emph{interpretation}: it quantifies how closely each LPG step tracks the 
natural gradient direction, giving a practitioner diagnostic for when 
projection suffices versus when explicit Fisher inversion is warranted.
The following proposition applies the Kantorovich inequality to make this precise.

\begin{proposition}[Kantorovich alignment diagnostic]
\label{prop:kappa_convergence}
Suppose 
Assumptions~(A1)--(A3),
Assumptions~(A5)--(A6), 
and~Assumption~\ref{ass:alignment} 
hold with alignment parameter $\alpha \in (0,1]$ and Fisher condition 
number $\kappa$, where $\kappa = \lambda_{\max}(F)/\lambda_{\min}(F)$.

\begin{enumerate}[label=(\roman*), leftmargin=2em]
\item \textbf{Euclidean convergence (unchanged).}
The projected gradient method satisfies the same rate as 
Corollary~\ref{cor:convergence_lpg}:
\begin{equation}
\label{eq:euclidean_rate}
\min_{0 \le t \le T-1} \E\!\left[\|\nabla_{\g} J(\theta_t)\|_F^2\right]
\le 
\frac{2(J^* - J(\theta_0))}{\eta\sqrt{T}}
+ \frac{L\eta\sigma_g^2}{\sqrt{T}},
\end{equation}
where $\eta$ is the learning rate scale.

\item \textbf{Natural gradient stationarity conversion.}
At any iterate $\theta_t$, the natural gradient norm and the Euclidean 
gradient norm are related by
\begin{equation}
\label{eq:nat_grad_conversion}
\frac{1}{\lambda_{\max}(F)} \|\nabla_{\g} J\|_F^2 
\le \langle \nabla J, F^{-1}\nabla J \rangle 
\le \frac{1}{\lambda_{\min}(F)} \|\nabla_{\g} J\|_F^2.
\end{equation}
Thus Euclidean $\varepsilon$-stationarity 
$(\|\nabla_{\g} J\|_F^2 \le \varepsilon)$ implies natural gradient 
$(\varepsilon/\lambda_{\min})$-stationarity: the conversion cost is a 
factor $1/\lambda_{\min}(F)$, not a bias floor.

\item \textbf{Directional quality of each step.}
At each iterate, the LPG update direction $d_t = \nabla_{\g} J(\theta_t)$ 
satisfies
\begin{equation}
\label{eq:directional_quality}
\cos(d_t,\, F(\theta_t)^{-1}\nabla J(\theta_t)) 
\ge \alpha 
\ge \frac{2\sqrt{\kappa}}{\kappa + 1},
\end{equation}
so the Euclidean step is within angle $\arccos(\alpha)$ of the natural gradient direction.
\end{enumerate}
\end{proposition}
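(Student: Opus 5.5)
The proof splits along the three parts of Proposition~\ref{prop:kappa_convergence}. Throughout, $\nabla J(\theta)\in\R^{d_{\g}}\cong\g$ by Proposition~\ref{prop:dimension_reduction}, so $\nabla_{\g}J = P_{\g}(\nabla J)=\nabla J$ in coordinates. The orthonormal basis makes the coordinate metric the identity, so Frobenius norms on $\g$ coincide with Euclidean norms on $\R^{d_{\g}}$. I also assume $F(\theta)$ is symmetric positive definite; this is needed for $\kappa$ to be finite, and it follows from \eqref{eq:fisher_explicit_main} whenever the features are linearly independent in $L^2(d^{\pi_\theta})$.

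For part~(i) I will reuse Corollary~\ref{cor:convergence_lpg}(iii) directly. Summing Lemma~\ref{lem:progress} with $\eta_t=\eta/\sqrt{T}$ and using $J(\theta_T)\le J^*$ bounds the average $\frac1T\sum_t \E\|P_{\g}\nabla J(\theta_t)\|_F^2$ by the stated right-hand side. The minimum over $t$ is at most the average, which gives \eqref{eq:euclidean_rate}.

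A gap needs care here. Lemma~\ref{lem:progress} uses unbiasedness (A4), which is not listed among the hypotheses of the proposition. I will either note that (A4) is implicitly inherited from the corollary, or replace it with the observation that $P_{\g}$ applied to the standard score-function estimator is unbiased, as the text preceding the proposition asserts. The alignment assumption is not used in this part.

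Part~(ii) is the Rayleigh-quotient bound applied to $F^{-1}$. Since $F$ is symmetric positive definite, the eigenvalues of $F^{-1}$ lie in $[1/\lambda_{\max},1/\lambda_{\min}]$. Hence for $v=\nabla J = \nabla_{\g}J$ we get $\lambda_{\max}^{-1}\|v\|^2\le v^\top F^{-1}v\le\lambda_{\min}^{-1}\|v\|^2$. The stationarity conversion then follows immediately: if $\|\nabla_{\g}J\|^2\le\varepsilon$, then $\langle\nabla J,F^{-1}\nabla J\rangle\le\varepsilon/\lambda_{\min}$.

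Part~(iii) has two inequalities. The first, $\cos\ge\alpha$, is exactly Assumption~\ref{ass:alignment}, since $d_t=P_{\g}(\nabla J(\theta_t))$. For the second, $\alpha\ge 2\sqrt\kappa/(\kappa+1)$, I will compute the cosine explicitly with $v=\nabla J$:
\[
\cos(v,F^{-1}v)=\frac{v^\top F^{-1}v}{\|v\|\,\|F^{-1}v\|}.
\]
Substituting $u=F^{-1/2}v$ turns this into $\frac{u^\top u}{\sqrt{u^\top F u}\,\sqrt{u^\top F^{-1}u}}$. The Kantorovich inequality states that $(u^\top Fu)(u^\top F^{-1}u)\le\frac{(\lambda_{\max}+\lambda_{\min})^2}{4\lambda_{\max}\lambda_{\min}}\|u\|^4$. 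Taking square roots, the cosine is at least $\frac{2\sqrt{\lambda_{\max}\lambda_{\min}}}{\lambda_{\max}+\lambda_{\min}}=\frac{2\sqrt\kappa}{\kappa+1}$. So any valid $\alpha$ can be taken to be at least this value, and the angle statement follows by applying $\arccos$, which is monotone decreasing.

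The main obstacle is logical rather than computational. Assumption~\ref{ass:alignment} posits some $\alpha$, and the chain $\cos\ge\alpha\ge 2\sqrt\kappa/(\kappa+1)$ only makes sense if $\alpha$ is taken to be the best constant, i.e.\ the infimum of the cosine along the trajectory. I will define $\alpha$ that way and define $\kappa$ as the supremum of the condition number along the trajectory. Then the Kantorovich bound holds pointwise at every iterate, so it survives taking the infimum.
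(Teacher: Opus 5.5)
Your proposal is correct and follows essentially the same route as the paper: part~(i) is read off from Corollary~\ref{cor:convergence_lpg}(iii) using that $\nabla J$ already lies in $\g$, part~(ii) is the eigenvalue (Rayleigh-quotient) bound for $F^{-1}$ that the paper phrases as an expansion in the eigenbasis of $F$, and part~(iii) is Assumption~\ref{ass:alignment} plus the Kantorovich inequality after the substitution $u=F^{-1/2}\nabla J$, exactly as in Proposition~\ref{prop:alignment_self_contained}. Your side remarks are legitimate tightenings of the paper's sketch rather than a different argument: (A4) is tacitly needed for part~(i), and the chain $\cos\ge\alpha\ge 2\sqrt{\kappa}/(\kappa+1)$ only holds if $\alpha$ is the trajectory infimum of the cosine and $\kappa$ the trajectory supremum of the condition number, using that $2\sqrt{\kappa}/(\kappa+1)$ is decreasing in $\kappa$.
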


\begin{proof}
[Proof sketch]
Part~(i): For Lie-algebraic policies, the policy gradient already lives 
in $\g$ without projection---a consequence of the score function 
expansion and the fact that actions are $\g$-valued 
(Supplement~\S\ref{sec:appendix_convergence_proofs}).
Part~(ii): Expand $P_{\g}(\nabla J)$ in the eigenbasis of $F(\theta)$ and 
apply the Cauchy--Schwarz-type estimate 
$\langle u, F^{-1/2}v\rangle^2 \le \lambda_{\max}/\lambda_{\min} \cdot \|u\|^2\|v\|^2$; 
the $\kappa$-dependent bounds follow directly.
Part~(iii): Follows from Assumption~\ref{ass:alignment} verified by the Kantorovich 
inequality in Supplement~\S\ref{sec:appendix_alignment}.
\end{proof}

\begin{remark}[Decision rule]
\label{rem:kantorovich_classical}
The bound $\alpha \ge 2\sqrt{\kappa}/(\kappa+1)$ follows from the Kantorovich 
inequality (Supplement~\S\ref{sec:kantorovich}). 
If $\kappa < 5$ (bound $\ge 0.745$): use LPG.
If $\kappa > 10$ (bound $\le 0.575$): prefer Fisher inversion.
Our $\SO(3)^J$ experiments give $\kappa \approx 2.5$ (empirical $\alpha = 0.971$), 
firmly in the LPG-recommended regime.
\end{remark}

\begin{table}[t]
\centering
\caption{Convergence by algebra type and Fisher isotropy. Anisotropy 
degrades the \emph{interpretation} of stationarity (by factor $\kappa$) 
but not the algorithmic rate.}
\label{tab:convergence_classification}
\scriptsize
\begin{tabular}{@{}lcc@{}}
\toprule
& \textbf{Isotropic} ($\kappa \approx 1$) 
& \textbf{Anisotropic} ($\kappa \gg 1$) \\
\midrule
\textbf{Compact} 
& $\mathcal{O}(1/\sqrt{T})$, no $R$-dep.\ 
& $\mathcal{O}(1/\sqrt{T})$; nat.\ grad.\ gap $\times \kappa$ \\
\textbf{Non-compact} 
& $\mathcal{O}(e^{2R}/\sqrt{T})$ 
& $\mathcal{O}(e^{2R}/\sqrt{T})$; nat.\ grad.\ gap $\times \kappa$ \\
\bottomrule
\end{tabular}
\end{table}

\begin{corollary}[Compact vs.\ non-compact convergence]
\label{cor:compact_convergence}
Under Assumptions~(A1)--(A6) and Assumption~\ref{ass:alignment}:
\begin{enumerate}[label=(\roman*),leftmargin=2em]
\item \textbf{Compact} $\g \subseteq \uu(n)$: $L = \mathcal{O}(1)$ independent of $R_0$
(Theorem~\ref{thm:dichotomy}), giving $\min_{t\le T}\E[\|P_{\g}(\nabla J)\|_F^2] = \mathcal{O}(1/\sqrt{T})$.
Radius projection triggers on $<2\%$ of iterations (Table~\ref{tab:scalability}),
so Assumption~(A3) is non-restrictive in practice.
\item \textbf{Non-compact} $\g$ with hyperbolic element: $L = \Omega(e^{2R_0})$
(Theorem~\ref{thm:dichotomy}), degrading the rate to $\mathcal{O}(e^{2R_0}/\sqrt{T})$;
anisotropy ($\kappa>1$) further compounds via the natural gradient conversion factor
(Proposition~\ref{prop:kappa_convergence}(ii)).
\end{enumerate}
\end{corollary}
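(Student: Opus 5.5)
The plan is to obtain both parts by substituting the algebra-dependent Lipschitz constant from Theorem~\ref{thm:dichotomy} into the nonconvex rate of Corollary~\ref{cor:convergence_lpg}(iii), and then making the step-size constraint $\eta \le 1/L$ explicit.

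The first step is common to both parts. Assumption~(A3) keeps the iterates in the ball $\{\|\theta\|_F \le R_0\}$, so the Lipschitz constant $L = L(R_0)$ of Theorem~\ref{thm:dichotomy} is valid along the whole trajectory. Lemma~\ref{lem:progress} therefore applies at every $t$ with this $L$, and Corollary~\ref{cor:convergence_lpg}(iii) gives the averaged bound. Since a minimum never exceeds an average, and linearity of expectation gives
\[
\min_{t<T}\E\big[\|P_{\g}(\nabla J(\theta_t))\|_F^2\big] \le \E\Big[\tfrac1T\sum_{t<T}\|P_{\g}(\nabla J(\theta_t))\|_F^2\Big],
\]
we obtain
\[
\min_{t<T}\E\big[\|P_{\g}(\nabla J(\theta_t))\|_F^2\big] \le \frac{2(J^*-J(\theta_0))}{\eta\sqrt T} + \frac{L\eta\sigma_g^2}{\sqrt T}.
\]

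For part~(i), Theorem~\ref{thm:dichotomy} gives $L = \mathcal{O}(1)$ independently of $R_0$ when $\g \subseteq \uu(n)$, because the exponential factor in \eqref{eq:lipschitz_constant} disappears by unitarity. Taking $\eta = 1/L$, the right-hand side becomes $\big(2L(J^*-J(\theta_0)) + \sigma_g^2\big)/\sqrt T$, which is $\mathcal{O}(1/\sqrt T)$ with no $R_0$-dependence. The statement that radius projection rarely triggers is not a mathematical claim. I would cite Table~\ref{tab:scalability} for it, noting only that the rate does not depend on how often the projection fires: a trigger merely confines the iterate to a ball on which the same $L$ still holds.

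For part~(ii), the dichotomy gives a two-sided estimate. The witness MDP of Proposition~\ref{prop:lower_bound} shows $L(R_0) \ge c\,e^{2R_0}$ in the worst case over the class, and Lemma~\ref{lem:frechet_bounds}(ii) together with Lemma~\ref{lem:rl_smoothness} gives $L(R_0) \le C e^{2R_0}$.
\begin{itemize}
\item The admissible step is then forced to satisfy $\eta \le 1/L \le c^{-1}e^{-2R_0}$, which is where the lower bound is used.
\item With $\eta = 1/L$, the first term becomes $2L(J^*-J(\theta_0))/\sqrt T = \mathcal{O}(e^{2R_0}/\sqrt T)$, which is where the upper bound is used.
\item The variance term $L\eta\sigma_g^2/\sqrt T = \sigma_g^2/\sqrt T$ stays bounded.
\end{itemize}
This yields the degraded rate $\mathcal{O}(e^{2R_0}/\sqrt T)$. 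For the anisotropy clause, I would apply Proposition~\ref{prop:kappa_convergence}(ii): a Euclidean bound $\varepsilon$ becomes a natural-gradient bound $\varepsilon/\lambda_{\min}(F)$, i.e.\ an extra factor of order $\kappa$ relative to the isotropic scaling $\lambda_{\max}(F)$.

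The main obstacle is reconciling the radius projection with the exact linear update \eqref{eq:proj_update} used in Lemma~\ref{lem:progress}. Projecting onto the ball $\{\|\theta\|_F\le R_0\}$ is no longer the linear map $P_{\g}$, so the descent step must be rechecked. My first attempt is to treat the composite map $\theta \mapsto \Pi_{B_{R_0}}(\theta+\eta P_{\g}(g))$ as a projected stochastic gradient step onto a closed convex set. Nonexpansiveness of $\Pi_{B_{R_0}}$ then controls the quadratic term, and an iterate that is not projected is untouched. If this argument does not close cleanly, I would fall back on stating the result conditional on Assumption~(A3) as an \emph{a priori} iterate bound, which is exactly how the corollary is phrased.
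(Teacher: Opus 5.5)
Your route is the paper's. The corollary has no separate proof and is obtained by inserting the algebra-dependent $L$ of Theorem~\ref{thm:dichotomy} into Corollary~\ref{cor:convergence_lpg}(iii), bounding the minimum by the average, citing Table~\ref{tab:scalability} for the trigger rate, and invoking Proposition~\ref{prop:kappa_convergence}(ii) for the $\kappa$ factor. You separate the roles of the two bounds: the upper bound on $L$ produces the $\mathcal{O}(e^{2R_0}/\sqrt{T})$ rate, and the lower bound only caps the admissible step in the worst case. That is cleaner than the statement's wording ``$L=\Omega(e^{2R_0})$, degrading the rate''.

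There are two caveats. First, the lower bound $L(R_0)\ge c\,e^{2R_0}$ is established only for $\g=\gl(n)$.
\begin{itemize}
\item Proposition~\ref{prop:lower_bound} is stated for every algebra with a hyperbolic element, but its proof takes $\g=\gl(n)$.
\item Theorem~\ref{thm:dichotomy} itself claims only $\mathcal{O}(e^{2R})$ for $\g\neq\gl(n)$.
\item Remark~\ref{rem:sln_exponent} gives just $\Omega(e^{2c_nR})$ for $\ssl(n)$.
\end{itemize}
So your appeal to the witness ``in the worst case over the class'' must be restricted accordingly. The rate in part~(ii) is unaffected, since it uses only the upper bound.

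Second, your first attempt at handling the radius projection would not close. The ball projection is nonlinear, so $\E[\theta_{t+1}\mid\mathcal{F}_t]\neq\theta_t+\eta_tP_{\g}(\nabla J(\theta_t))$, and the unbiasedness step in Lemma~\ref{lem:progress} is lost. Nonexpansiveness handles only the quadratic term, not the linear one. Worse, projected ascent can converge to a boundary point where $\nabla J$ is a nonzero outward normal, so $\|P_{\g}(\nabla J(\theta_t))\|_F^2$ need not vanish at all; only a gradient-mapping or Moreau-envelope measure can be bounded.

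Your fallback, reading (A3) as an a priori bound under which the update is exactly \eqref{eq:proj_update}, is precisely what the paper's proof of Lemma~\ref{lem:progress} silently assumes. In part~(i) you can do better. A radius-independent $L$ is a global Lipschitz constant, so the descent lemma holds on all of $\g$. The projection can then be dropped and (A3) is not needed at all, which makes ``non-restrictive'' a theorem rather than an empirical remark.
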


\section{Algorithm and Computational Complexity}
\label{sec:algorithms}

Projection onto classical matrix Lie algebras admits closed-form formulas
at $\mathcal{O}(n^2)$ per block; Table~\ref{tab:complexity} compares per-iteration
costs against exact and approximate natural gradient methods.

\subsection{Algorithm}

\begin{algorithm}[t]
\caption{Lie--Projected Policy Gradient (LPG)}
\label{alg:lpg}
\begin{algorithmic}[1]
\STATE \textbf{Input:} initial parameter $\theta_0 \in \g$, stepsizes 
$\{\eta_t\}$, parameter bound $B_\theta > 0$, iterations $T$
\FOR{$t = 0,\dots,T-1$}
    \STATE Collect trajectories and compute stochastic gradient $g_t$:
    \[
        \E[g_t \mid \theta_t] = \nabla J(\theta_t).
    \]
    \STATE Project onto Lie algebra:
    $v_t = P_{\g}(g_t)$.
    \STATE Gradient step:
    $\tilde{\theta}_{t+1} = \theta_t + \eta_t v_t$.
    \STATE Radius projection (ensures Assumption~\ref{ass:bounded_iterates}):
    \[
        \theta_{t+1} = 
        \begin{cases}
        \tilde{\theta}_{t+1} & \text{if } 
            \|\tilde{\theta}_{t+1}\|_F \le B_\theta, \\[3pt]
        B_\theta \cdot \tilde{\theta}_{t+1} / \|\tilde{\theta}_{t+1}\|_F 
            & \text{otherwise}.
        \end{cases}
    \]
\ENDFOR
\STATE \textbf{Output:} $\theta_T$ or uniform average 
$\bar{\theta}_T = T^{-1}\sum_{t=0}^{T-1} \theta_t$.
\end{algorithmic}
\end{algorithm}

\begin{remark}[No retraction needed]
\label{rem:radius_projection}
Since $\g$ is a linear subspace, $\theta_t + \eta_t v_t \in \g$ whenever
both summands lie in $\g$: no retraction or exponential map is required
in the optimization step (the exponential map appears only in the
policy parameterization). The radius projection preserves $\g$ by rescaling;
for compact algebras the trigger rate is under $2\%$
(Table~\ref{tab:scalability}, $J \le 30$ runs).
\end{remark}

The projection step is the only operation not present in classical policy
gradient, and for all classical algebras---notably $\so(3)^J$---this projection
reduces to blockwise skew-symmetrization, costing $\mathcal{O}(n^2 J)$ flops~\cite{lezcano2019trivializations}.

\subsection{Complexity comparison}

Let $d_{\g} = \dim(\g)$ denote the intrinsic Lie-algebra dimension and $n$ the
matrix size (e.g., $n=3$ for $\SO(3)$). Table~\ref{tab:complexity} summarizes
the per-iteration complexities of widely used policy optimization methods.

\begin{table}[t]
\centering
\caption{Per-iteration complexity.}
\label{tab:complexity}
\scriptsize
\setlength{\tabcolsep}{4pt}
\begin{tabular}{@{}lcc@{}}
\toprule
\textbf{Method} & \textbf{Cost} & \textbf{Dom.\ op.} \\
\midrule
Euclid.\ PG & $\mathcal{O}(d_{\g})$ & grad.\ acc. \\
Exact NG & $\mathcal{O}(d_{\g}^3)$ & Fisher inv. \\
CG ($k$ it.) & $\mathcal{O}(k d_{\g}^2)$ & Fisher-vec. \\
K-FAC & $\mathcal{O}(d_{\g}^2)$ & Kronecker \\
\midrule
\textbf{LPG} & $\mathcal{O}(n^2 J)$ & proj.\ $\g$ \\
\bottomrule
\end{tabular}
\end{table}

For $\SO(3)^J$, $d_{\g} = 3J$ and $n = 3$, so
$\mathcal{O}(n^2 J) = \mathcal{O}(9J)$---linear in joints versus cubic 
$\mathcal{O}((3J)^3)$ for exact natural gradient.
For $J=10$, projection costs ${\sim}90$ flops vs.\ ${\sim}27{,}000$ for Fisher 
inversion ($1.1$--$1.7\times$ wall-clock reduction; see Section~\ref{sec:experiments}).

\section{Numerical Illustrations}
\label{sec:experiments}

Each subsection validates a specific theoretical prediction from
Section~\ref{sec:geometric_inputs} and Section~\ref{sec:convergence}; Table~\ref{tab:theory_experiment}
maps claims to experiments. All experiments use controlled $\SO(3)^J$
motion-control tasks with exact Lie-algebraic structure, enabling clean
isolation from confounds such as partial observability or contact dynamics.
Results are averaged over five random seeds; error bars show one standard deviation.
The central data-science takeaway is that Lie-algebraic parameterization 
improves both sample efficiency (fewer interactions to reach a given return) 
and per-step computational cost relative to ambient and natural-gradient 
baselines, with the magnitude of improvement predicted by the algebra type 
via Theorem~\ref{thm:dichotomy}.
Section~\ref{sec:se3_experiment} tests the non-compact setting, Section~\ref{sec:robustness}
tests robustness under symmetry violations (with extended results in the Supplement), and Section~\ref{sec:method_comparison}
benchmarks LPG against baselines.

\begin{table}[t]
\centering
\caption{Theory--experiment mapping. Each entry validates a specific 
theoretical claim; the source column gives the theorem or proposition 
establishing the prediction.}
\label{tab:theory_experiment}
\scriptsize
\setlength{\tabcolsep}{4pt}
\begin{tabular}{@{}p{3.2cm}lc@{}}
\toprule
\textbf{Claim} & \textbf{Source} & \textbf{Expt.} \\
\midrule
Fisher isotropy \& alignment & Prop.~\ref{prop:block_fisher}, \ref{prop:kappa_convergence} & \S\ref{sec:fisher_alignment} \\
Anisotropy degradation & Prop.~\ref{prop:kappa_convergence} & \S\ref{sec:fisher_alignment} \\
$d_{\g}$-sample scaling & Prop.~\ref{prop:dimension_reduction} & \S\ref{sec:fisher_alignment} \\
Convergence regimes & Thm.~\ref{thm:dichotomy} & \S\ref{sec:convergence_rate_experiment} \\
Computational scaling & \S\ref{sec:algorithms} & \S\ref{sec:scalability_ablation} \\
Compact vs.\ non-compact & Cor.~\ref{cor:compact_convergence} & \S\ref{sec:se3_experiment} \\
Robustness to sym.\ violations & Remark~\ref{rem:operating_regime} & \S\ref{sec:robustness} \\
LPG vs.\ baselines & Prop.~\ref{prop:kappa_convergence} & \S\ref{sec:method_comparison} \\
\bottomrule
\end{tabular}
\end{table}

\subsection{Experimental Setup}

\paragraph{Environment.}
Synthetic $\SO(3)^{J}$ system with $J=10$ rotational joints; reward is 
the negative squared geodesic error 
$r(s,a) = -\sum_j \|\log(R_j^\top R_j^{\mathrm{target}})\|_F^2$.

\paragraph{Policies.}
We compare two parameterizations:
(a)~\textbf{Lie policy}: $\mu_\theta(s) \in \so(3)^J$,
$d_{\g} = 30$ parameters, $\sigma = 0.1$;
(b)~\textbf{Ambient policy}: $\mu_\theta(s) \in \R^{3J}$ via random 
projection from $\R^{45J}$ ($15\times$ over-parameterization, $d = 450$).
Actions are clipped to $\|a_t - \mu_\theta(s_t)\|_F \le 3\sigma$ 
(Assumption~\ref{ass:bounded_actions}). States update via 
$R_{j,t+1} = R_{j,t}\exp(\omega_{j,t})$ per joint.
Training uses REINFORCE with advantage normalization, 
$\eta = 0.25$, eight episodes per iteration, forty iterations.

\paragraph{$G$-equivariance of the environment.}
The dynamics $R_{j,t+1} = R_{j,t}\exp(\omega_{j,t})$ satisfy
$G$-equivariance (required by Proposition~\ref{prop:lossless}): left multiplication
by $g_j$ commutes with the right-acting exponential update, the
isotropic noise $\omega_j \sim \mathcal{N}(0,\sigma^2 I_3)$ is
$\mathrm{Ad}$-invariant, and the geodesic reward
$\|\log(R_j^\top R_j^*)\|_F$ is invariant under $R_j \mapsto g_j R_j$.

\subsection{Fisher Geometry, Alignment, and Sample Efficiency}
\label{sec:fisher_alignment}
\label{sec:anisotropy_ablation}
\label{sec:sample_efficiency}

The alignment between the natural gradient $F^{-1}\nabla J$ and the vanilla
gradient $\nabla J$ is measured using cosine similarity:
\[
\operatorname{align}
=
\frac{\langle F^{-1}\nabla J,\nabla J\rangle}
     {\|F^{-1}\nabla J\|\;\|\nabla J\|}.
\]
The Fisher matrix is estimated from empirical score covariances.
The \emph{isotropy deviation} is defined as 
$\varepsilon_F := \|F - \bar{\lambda}I\|_F / \|F\|_F$, 
where $\bar{\lambda} = \mathrm{tr}(F)/d_{\g}$ is the mean eigenvalue; 
$\varepsilon_F = 0$ corresponds to exact isotropy ($F \propto I$).

\paragraph{Results.}
Across $200$ measurements (five seeds $\times$ forty iterations),
Table~\ref{tab:fisher_alignment} reports alignment statistics and
Table~\ref{tab:controlled_anisotropy} shows results under controlled violations.

\begin{table}[t]
\centering
\scriptsize
\begin{minipage}[t]{0.33\textwidth}
\centering
\caption{Fisher--metric alignment statistics.}
\label{tab:fisher_alignment}
\setlength{\tabcolsep}{2pt}
\begin{tabular}{@{}lc@{}}
\toprule
Statistic & Value \\ \midrule
Mean alignment & $0.971$ \\
95\% CI & $[0.970, 0.972]$ \\
Std.\ dev. & $0.007$ \\
Range & $[0.945, 0.990]$ \\
$\kappa$ & $2.53 \pm 0.12$ \\
$\varepsilon_F$ & $0.24 \pm 0.01$ \\
\bottomrule
\end{tabular}
\end{minipage}\hfill
\begin{minipage}[t]{0.63\textwidth}
\centering
\caption{Alignment under controlled isotropy violations.}
\label{tab:controlled_anisotropy}
\setlength{\tabcolsep}{2pt}
\begin{tabular}{@{}lcccc@{}}
\toprule
Condition & $\kappa$ & $\varepsilon_F$ & Alignment & Final Return \\
\midrule
Uniform (baseline) & $2.53$ & $0.24$ & $0.971$ & $-695.0$ \\
Axis-biased & $3.08$ & $0.29$ & $0.954$ & $-700.5$ \\
Correlated ($\kappa_M{=}5$) & $6.83$ & $0.49$ & $0.884$ & $-754.3$ \\
Correlated ($\kappa_M{=}10$) & $12.98$ & $0.64$ & $0.816$ & $-799.2$ \\
\bottomrule
\end{tabular}
\end{minipage}
\end{table}

\paragraph{Comparison with theoretical bounds.}
The $\kappa$-based bound gives $\cos \ge 2\sqrt{2.53}/(2.53 + 1) = 0.901$; 
the empirical mean ($0.971$) exceeds this by $7.8\%$, confirming that 
worst-case configurations rarely occur.

\begin{figure}[t]
\centering
\begin{minipage}[t]{0.48\textwidth}
\centering
\IfFileExists{figs/exp1_fisher_alignment_hist.png}{\includegraphics[width=\textwidth]{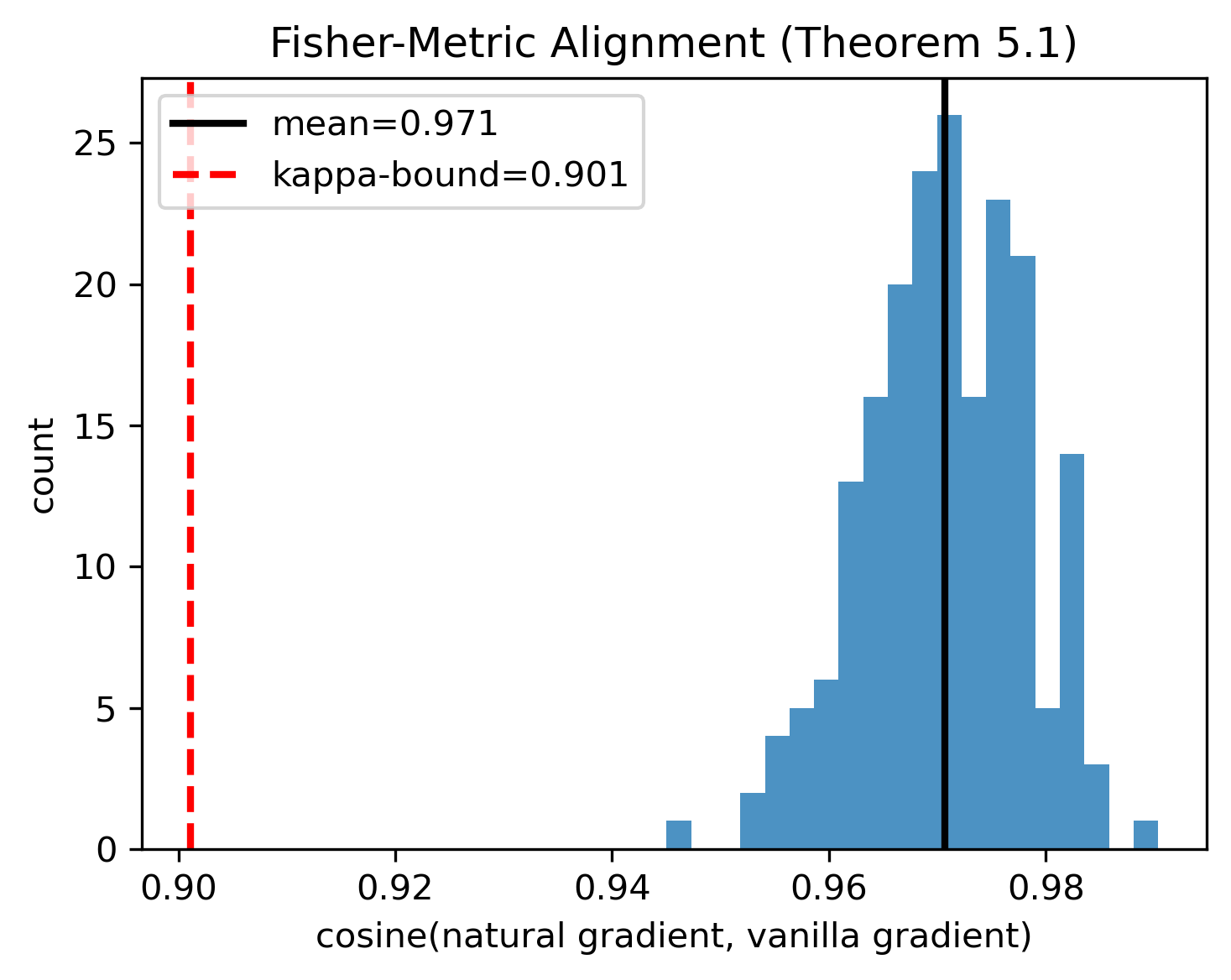}}{\fbox{\parbox[c][3cm][c]{6cm}{\centering\small[exp1_fisher_alignment_hist.png]}}}
\end{minipage}\hfill
\begin{minipage}[t]{0.48\textwidth}
\centering
\IfFileExists{figs/isotropy_tracking.png}{\includegraphics[width=\textwidth]{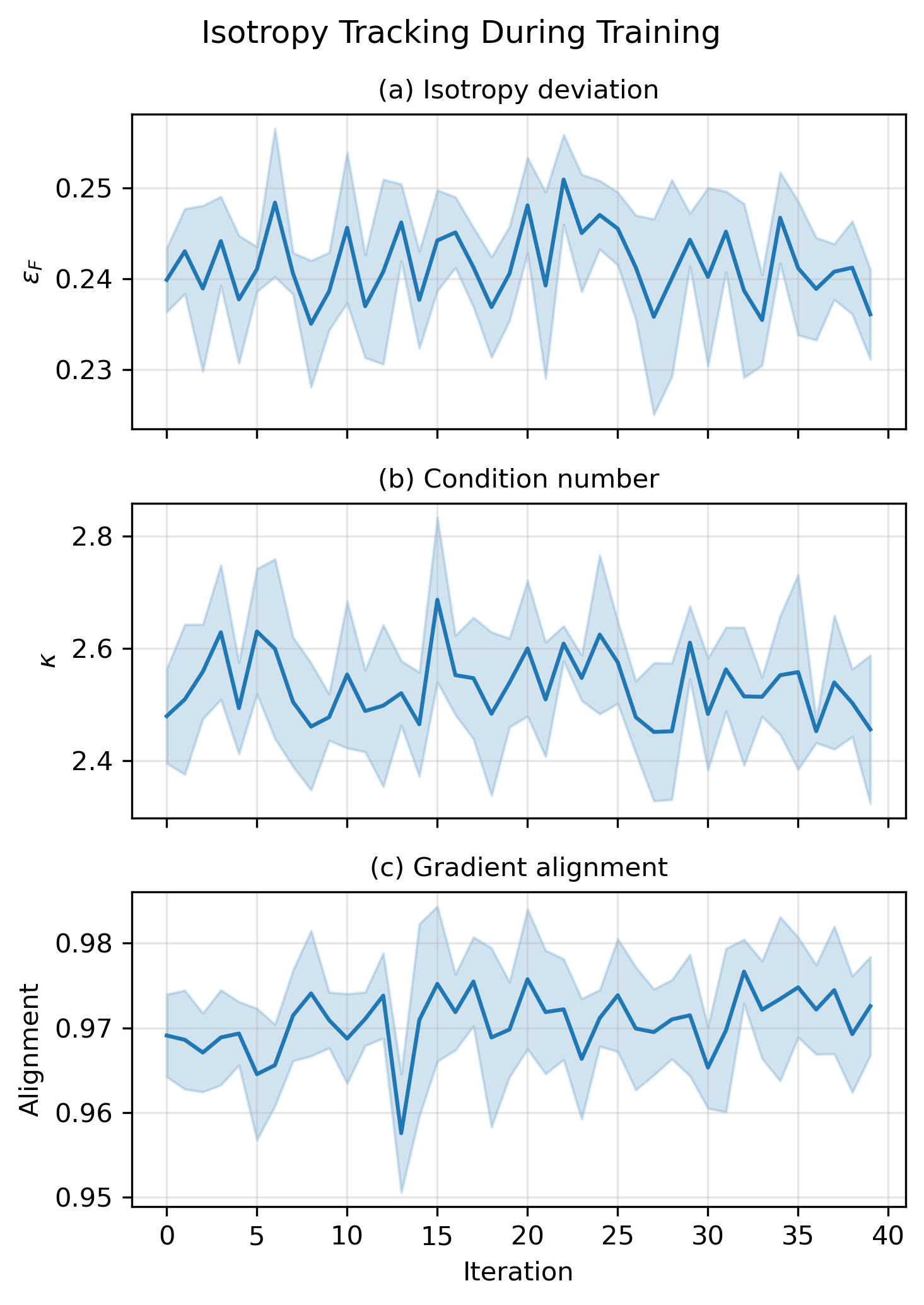}}{\fbox{\parbox[c][3cm][c]{6cm}{\centering\small[isotropy_tracking.png]}}}
\end{minipage}
\caption{\textbf{Left:} Fisher--metric alignment histogram (200 measurements); 
vertical lines show empirical mean (solid) and $\kappa$-based bound (dashed).
\textbf{Right:} Isotropy metrics during training ($J=10$, 5 seeds): 
(a)~$\varepsilon_F(t)$ stays below $0.3$,
(b)~$\kappa(t) \in [1.9, 2.8]$,
(c)~alignment exceeds theoretical bound throughout.}
\label{fig:fisher_and_isotropy}
\end{figure}

\paragraph{Eigenvalue analysis.}
Eigenvalue decomposition of the empirical Fisher confirms near-isotropy: 
effective rank $r_{\mathrm{eff}} = 28.2 \pm 0.1$ (out of $d_{\g} = 30$), 
$\kappa = 2.53 \pm 0.12$, $\varepsilon_F = 0.24 \pm 0.01$. 
Tracking over 40 iterations (Figure~\ref{fig:fisher_and_isotropy}, right) shows 
no drift: $\varepsilon_F(t)$ remains stable near $0.24$; alignment 
exceeds $0.94$ throughout.

\paragraph{Approximate equivalence (Assumption~\ref{ass:alignment}).}
Across $100$ synthetic Fisher matrices with controlled anisotropy, alignment 
correlates with isotropy deviation at $r = -0.875$ ($p < 10^{-4}$), with 
empirical fit $\mathrm{align} \approx 1.01 - 0.15\,\varepsilon_F$---the small 
coefficient ($0.15$ vs.\ theoretical worst-case $4$) highlights the 
conservatism of Assumption~\ref{ass:alignment}.

\paragraph{Anisotropy ablation.}

We study alignment degradation under controlled isotropy violations.
In synthetic diagonal Fisher tests, alignment degrades smoothly from 
$1.0$ ($\kappa = 1$) to $0.651$ ($\kappa = 10$), remaining well above 
the vacuous $1 - 4\varepsilon_F$ bound.

\paragraph{Realistic isotropy violations.}
We introduce controlled per-dimension noise scaling in the 
$\SO(3)^{10}$ environment: axis-biased ($\sqrt{1.5}{\times}$ noise 
on $z$-components) and globally correlated 
($\sigma_i$ spread from $\sigma_{\mathrm{base}}$ to 
$\sigma_{\mathrm{base}}\!\sqrt{\kappa_M}$ for 
$\kappa_M \in \{5, 10\}$).

Alignment degrades monotonically with $\kappa$ 
(Figure~\ref{fig:anisotropy_and_efficiency}), but returns are also sensitive: 
at $\kappa \approx 13$, ${\sim}15\%$ return degradation. All empirical 
alignments exceed the $\kappa$-based theoretical bound. We recommend $\kappa < 10$.

\paragraph{Sample efficiency.}

The Lie parameterization uses $d_\g=30$ parameters versus $450$ ambient.
Over $400$ iterations (Figure~\ref{fig:anisotropy_and_efficiency}), the Lie policy 
achieves final return $-908.95$ versus $-1324.76$ ambient, with AUC ratio 
$0.63$ ($37\%$ improvement). The ambient policy collapses early due to 
ill-conditioning, consistent with Proposition~\ref{prop:dimension_reduction}.

\begin{figure}[t]
\centering
\begin{minipage}[t]{0.48\textwidth}
\centering
\IfFileExists{figs/controlled_anisotropy.png}{\includegraphics[width=\textwidth]{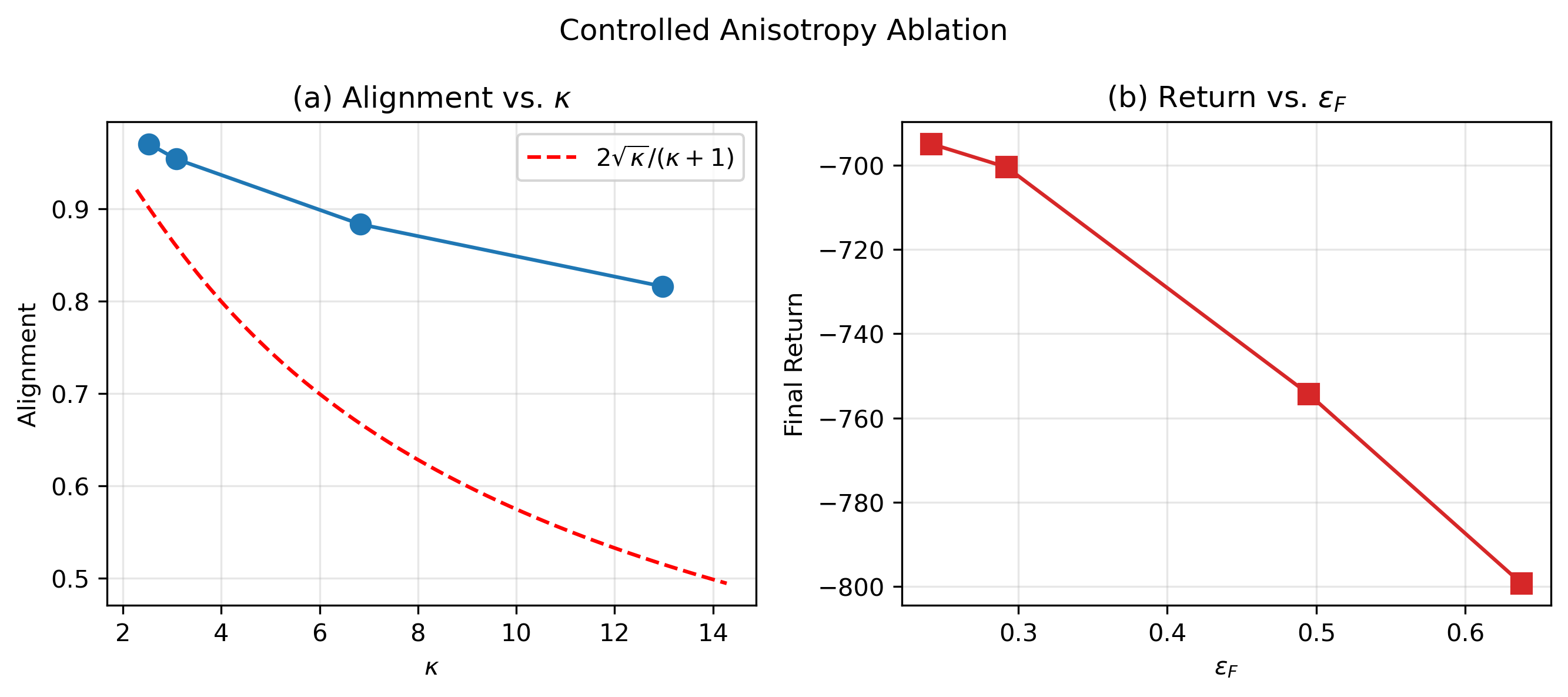}}{\fbox{\parbox[c][3cm][c]{6cm}{\centering\small[controlled_anisotropy.png]}}}
\end{minipage}\hfill
\begin{minipage}[t]{0.48\textwidth}
\centering
\IfFileExists{figs/exp3_learning_curves.png}{\includegraphics[width=\textwidth]{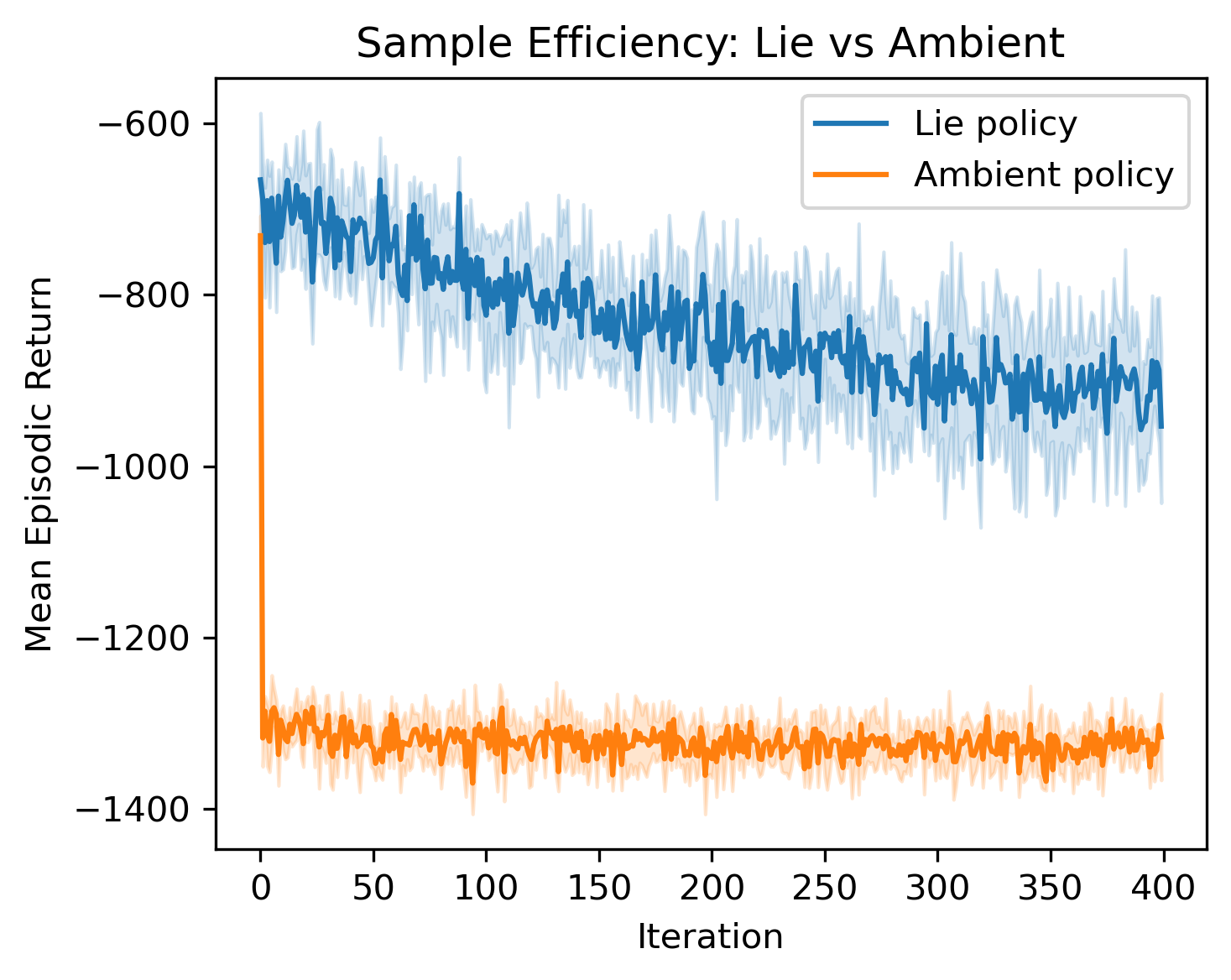}}{\fbox{\parbox[c][3cm][c]{6cm}{\centering\small[exp3_learning_curves.png]}}}
\end{minipage}
\caption{\textbf{Left:} Controlled anisotropy---alignment vs.\ $\kappa$ 
(a) and return vs.\ $\varepsilon_F$ (b).
\textbf{Right:} Sample efficiency---Lie policy (blue) vs.\ ambient 
(orange); shaded: $\pm 1$ std.}
\label{fig:anisotropy_and_efficiency}
\end{figure}

\subsection{Convergence Rate Illustration}
\label{sec:convergence_rate_experiment}

We empirically illustrate the convergence regimes predicted by the 
geometric classification of Theorem~\ref{thm:dichotomy}.

\paragraph{Controlled setting (quadratic objective).}
Projected SGD on a quadratic restricted to $\so(3)^{10}$ gives
log--log slope $-0.98$ (Figure~\ref{fig:convergence}, left), consistent with 
the $\mathcal{O}(1/T)$ deterministic rate when $L = \mathcal{O}(1)$
(Proposition~\ref{prop:compact_advantage}). The favorable exponent reflects the
$R$-independent Lipschitz constant: step sizes need not decay to
compensate for exponential growth.

\paragraph{Stochastic setting.}
On a $d_{\g}=30$ stochastic quadratic proxy (same dimension as $\so(3)^{10}$, 
Gaussian gradient noise $\sigma_g=1$, step size $\eta/\sqrt{T}$), 
the running-average log--log slope 
is $-0.52 \pm 0.08$ (five seeds, Figure~\ref{fig:convergence}, right), matching 
the $\mathcal{O}(T^{-1/2})$ stochastic rate (Corollary~\ref{cor:convergence_lpg}).
The deterministic--stochastic gap reflects the $L\eta\sigma_g^2/\sqrt{T}$
variance term dominating the convergence bound.

\begin{figure}[t]
\centering
\begin{minipage}[t]{0.48\textwidth}
\centering
\IfFileExists{figs/exp4_convergence_loglog.png}{\includegraphics[width=\textwidth]{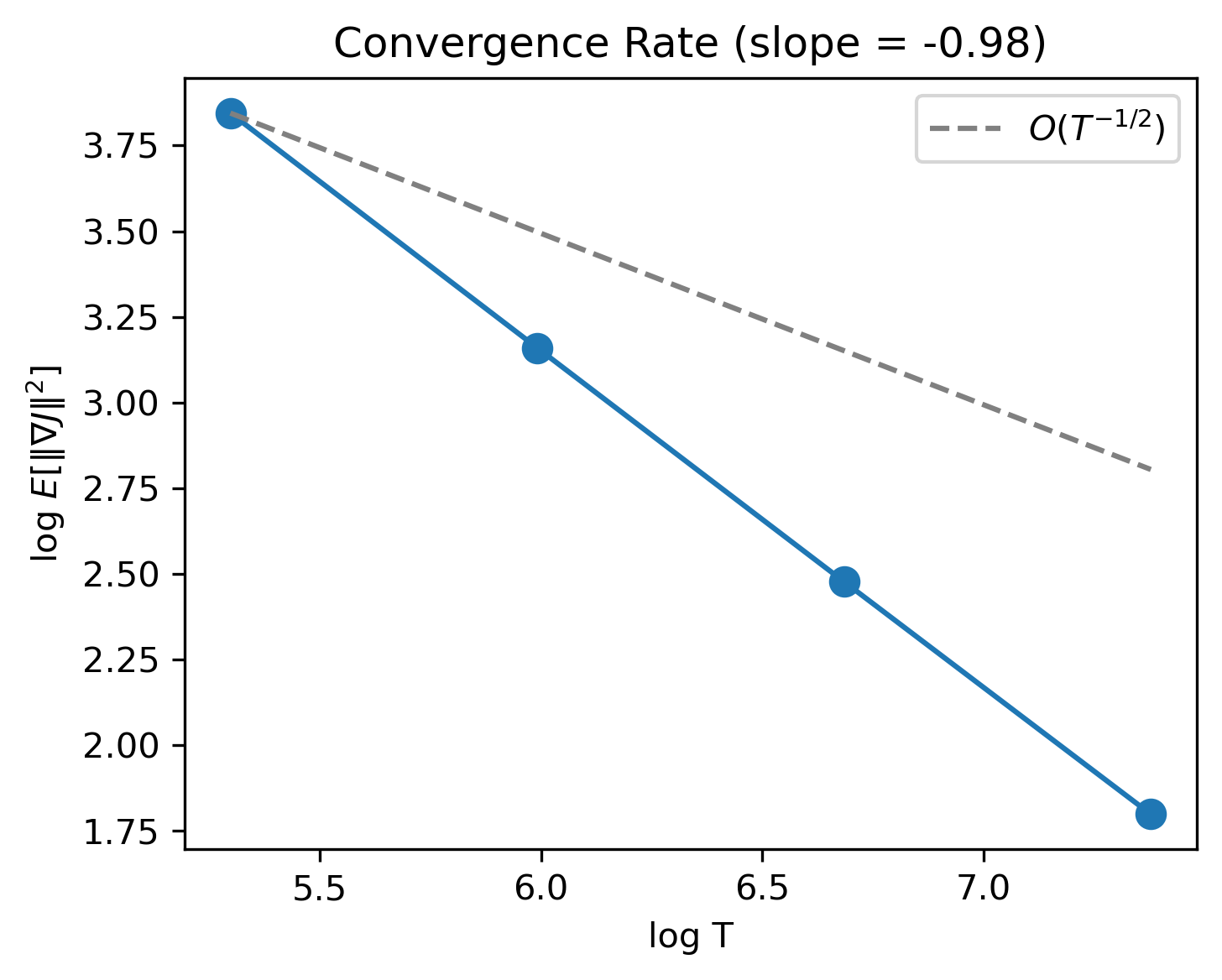}}{\fbox{\parbox[c][3cm][c]{5.5cm}{\centering\small[exp4_convergence_loglog.png]}}}
\end{minipage}\hfill
\begin{minipage}[t]{0.48\textwidth}
\centering
\IfFileExists{figs/exp4_convergence_loglog_stochastic.png}{\includegraphics[width=\textwidth]{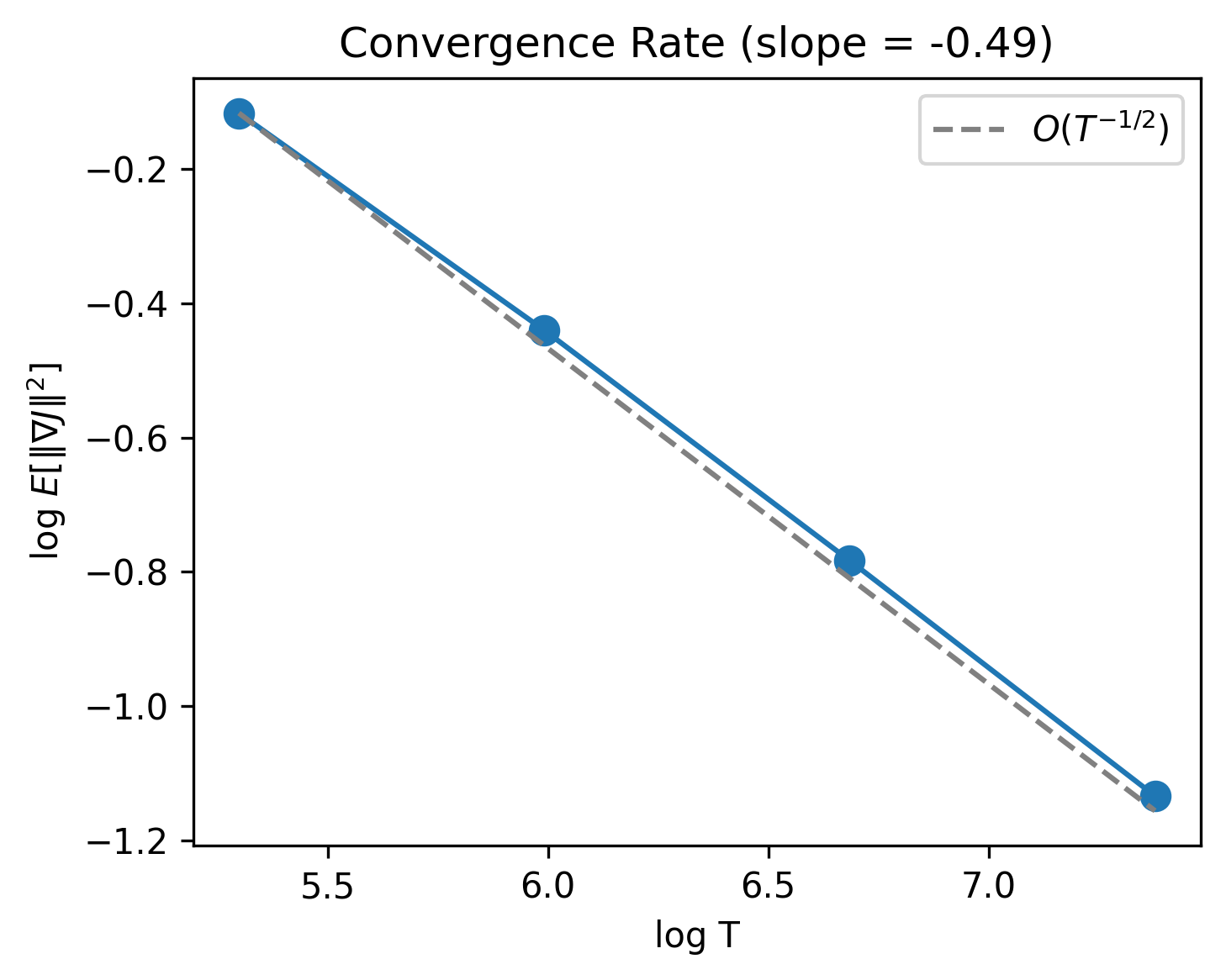}}{\fbox{\parbox[c][3cm][c]{5.5cm}{\centering\small[exp4_convergence_loglog_stochastic.png]}}}
\end{minipage}
\caption{Log--log convergence rate. \textbf{Left:} Deterministic 
(quadratic on $\so(3)^{10}$, $d_{\g}=30$), slope $-0.98$. \textbf{Right:} 
Stochastic quadratic proxy ($d_{\g}=30$, Gaussian noise $\sigma_g=1$), slope $-0.52 \pm 0.08$. 
Both consistent with compact-algebra predictions 
(Theorem~\ref{thm:dichotomy} and Corollary~\ref{cor:convergence_lpg}).}
\label{fig:convergence}
\end{figure}

\subsection{Computational Scaling and Joint-Count Ablation}
\label{sec:computational_efficiency}
\label{sec:scalability_ablation}

Section~\ref{sec:algorithms} established the $\mathcal{O}(n^2 J)$ vs.\ 
$\mathcal{O}(d_{\g}^3)$ asymptotic advantage of Lie projection over 
Fisher inversion; here we verify this empirically. We benchmark Fisher 
inversion (Cholesky) vs.\ blockwise skew-symmetrization for $\so(3)^J$ 
(mean of 200 runs per condition), with results in Table~\ref{tab:timing}.
Projection speedups range from $1.1\times$ to $1.7\times$, but the 
end-to-end advantage is modest because environment interaction and 
gradient estimation dominate wall-clock time. The projection step itself 
avoids $\mathcal{O}(d_{\g}^3)$ matrix inversion entirely.

\begin{table}[t]
\centering
\caption{Timing: Fisher inversion vs.\ Lie projection.}
\label{tab:timing}
\scriptsize
\setlength{\tabcolsep}{2pt}
\begin{tabular}{@{}ccccc@{}}
\toprule
$J$ & $d_{\g}$ & Fisher & Proj. & Speedup \\
 & & ($\mu$s) & ($\mu$s) & \\
\midrule
5 & 15 & 33.4 & 18.6 & $1.8\times$ \\
10 & 30 & 71.4 & 47.2 & $1.5\times$ \\
30 & 90 & 152.1 & 112.2 & $1.4\times$ \\
\bottomrule
\end{tabular}
\end{table}

\paragraph{Joint-count ablation.}

We vary $J \in \{5, 10, 15, 20, 30\}$ and measure optimization 
performance and Fisher isotropy (Table~\ref{tab:scalability}). For each $J$, both Lie-parameterized 
and ambient policies are trained for $200$ iterations with identical 
hyperparameters (ambient uses $15 \times d_{\g}$ parameters).

\begin{table}[t]
\centering
\caption{Scalability with number of joints $J$.}
\label{tab:scalability}
\scriptsize
\setlength{\tabcolsep}{2pt}
\begin{tabular}{@{}cccccc@{}}
\toprule
$J$ & $d_{\g}$ & Alignment & AUC Ratio & Speedup & $\varepsilon_F$ \\
\midrule
5 & 15 & $0.986 \pm 0.005$ & $0.58$ & $1.7\times$ & $0.17$ \\
10 & 30 & $0.971 \pm 0.007$ & $0.59$ & $1.3\times$ & $0.24$ \\
15 & 45 & $0.955 \pm 0.009$ & $0.61$ & $1.1\times$ & $0.29$ \\
20 & 60 & $0.939 \pm 0.011$ & $0.61$ & $1.1\times$ & $0.33$ \\
30 & 90 & $0.906 \pm 0.014$ & $0.62$ & $1.1\times$ & $0.39$ \\
\bottomrule
\end{tabular}
\end{table}

Fisher alignment remains above $0.90$ even at $J = 30$, confirming 
graceful degradation. The Lie policy consistently outperforms ambient 
at all scales (AUC ratio $0.58$--$0.62$). Isotropy deviation grows 
roughly as $\sqrt{d_{\g}}$, consistent with Assumption~\ref{ass:alignment}.

At larger scales ($J \in \{50,100,200\}$), Fisher inversion grows cubically 
while Lie projection grows linearly; projection speedup exceeds $100\times$ 
at $J=200$ with alignment remaining above $0.86$ ($\kappa \approx 4.1$). 
Full benchmarks in Supplement~\S\ref{sec:supp_scaling} (Table~S5).

\subsection{Symmetry-Violation Robustness}
\label{sec:robustness}

Under controlled $G$-equivariance-breaking perturbations to $\SO(3)^{10}$ 
(stochastic transitions $\sigma_\epsilon \in \{0.01,0.05,0.1\}$, observation noise, 
and reward noise), alignment remains above $0.94$ and $\kappa$ stays below $3.2$, 
demonstrating graceful rather than catastrophic degradation. Full results appear 
in Supplement~\S\ref{sec:supp_robustness} (Table~S6).

\subsection{Non-Compact Algebra: \texorpdfstring{$\mathrm{SE}(3)$}{SE(3)} Rigid-Body Control}
\label{sec:se3_experiment}

We test the non-compact setting on $\SE(3)$, whose Lie algebra 
$\se(3) \subset \R^{4\times 4}$ ($\dim = 6$) has an unbounded translation 
component. Elements of $\se(3)$ take the form $\bigl[\begin{smallmatrix}\Omega & v \\ 0 & 0\end{smallmatrix}\bigr]$ 
with $\Omega \in \so(3)$ skew-symmetric; eigenvalues are purely imaginary 
or zero, so $\se(3)$ contains no hyperbolic elements and the 
$\Theta(e^{2R})$ exponential barrier of Theorem~\ref{thm:dichotomy} does not 
apply. Nevertheless, $\se(3)$ is non-compact and the translation 
component introduces polynomial Lipschitz growth ($L(R_0) = \mathcal{O}(R_0^2)$), 
making radius projection necessary in practice.

\textbf{Scope of this experiment.} The $\SE(3)$ results validate two 
predictions: (i)~polynomial $L$-growth and the necessity of radius 
projection (Theorem~\ref{thm:dichotomy} and Assumption~\ref{ass:smoothness_exp}(ii)), and 
(ii)~the Kantorovich alignment bound (Proposition~\ref{prop:kappa_convergence}(iii)). 
They do \emph{not} validate Assumption~\ref{ass:concentrability} for $\SE(3)$, 
which remains an open condition for non-compact groups 
(Section~\ref{sec:prelim}); the experiment is conducted under the working 
assumption that concentrability holds for the translation-bounded 
trajectories induced by $B_\theta = 2.0$. 
Verifying this assumption would require bounding the mixing time of 
the induced Markov chain on the translation component of $\SE(3)$ 
under the radius-projected dynamics---a problem that reduces to 
ergodicity of a bounded random walk on $\R^3$ and is left as a 
concrete open direction. 
Results are summarised in Table~\ref{tab:se3_results}.

\begin{table}[t]
\centering
\scriptsize
\caption{Compact vs.\ non-compact Lie algebras.}
\label{tab:se3_results}
\setlength{\tabcolsep}{2pt}
\begin{tabular}{@{}lcccc@{}}
\toprule
Algebra & $d_{\g}$ & Alignment & $\kappa$ & $\varepsilon_F$ \\
\midrule
$\so(3)$ (compact) & 3 & $0.996 \pm 0.004$ & $1.30 \pm 0.13$ & $0.11$ \\
$\se(3)$ (non-compact) & 6 & $0.949 \pm 0.024$ & $2.79 \pm 0.38$ & $0.36$ \\
\bottomrule
\end{tabular}
\end{table}

$\SE(3)$ exhibits higher $\kappa$ ($2.79$ vs.\ $1.30$) and $\varepsilon_F$ 
($0.36$ vs.\ $0.11$) than $\SO(3)$, but alignment still exceeds $0.94$. 
Anisotropy concentrates in the translation directions (the abelian 
component, where polynomial Fr\'{e}chet bounds apply). Without radius 
projection, optimization diverges; with $B_\theta = 2.0$, training is 
stable. The alignment degradation for $\se(3)$ is modest 
($0.949$ vs.\ $0.996$)---at $\kappa \approx 2.8$ the Kantorovich bound 
gives $\alpha \ge 0.882$, and the empirical value exceeds this by $7\%$, 
about the same margin as in the compact case.

\paragraph{Reconciling theory with experiment.}
Although $\se(3)$ has no hyperbolic elements (so the $\Theta(e^{2R})$ 
barrier does not apply), training still diverges without radius projection because 
polynomial Lipschitz growth $L(R)=\mathcal{O}(R^2)$ is also unbounded: 
without projection $\|\theta\|_F$ drifts to ${\sim}18$, giving 
${\sim}324\times$ growth in $L$ relative to $R=1$ (Figure~\ref{fig:se3_divergence}). 
In short: non-compactness governs \emph{step aggressiveness}, not 
\emph{step direction quality}---radius projection is essential for both 
exponential and polynomial growth regimes.

\begin{figure}[t]
\centering
\IfFileExists{figs/exp_se3_divergence.png}{\includegraphics[width=0.85\textwidth]{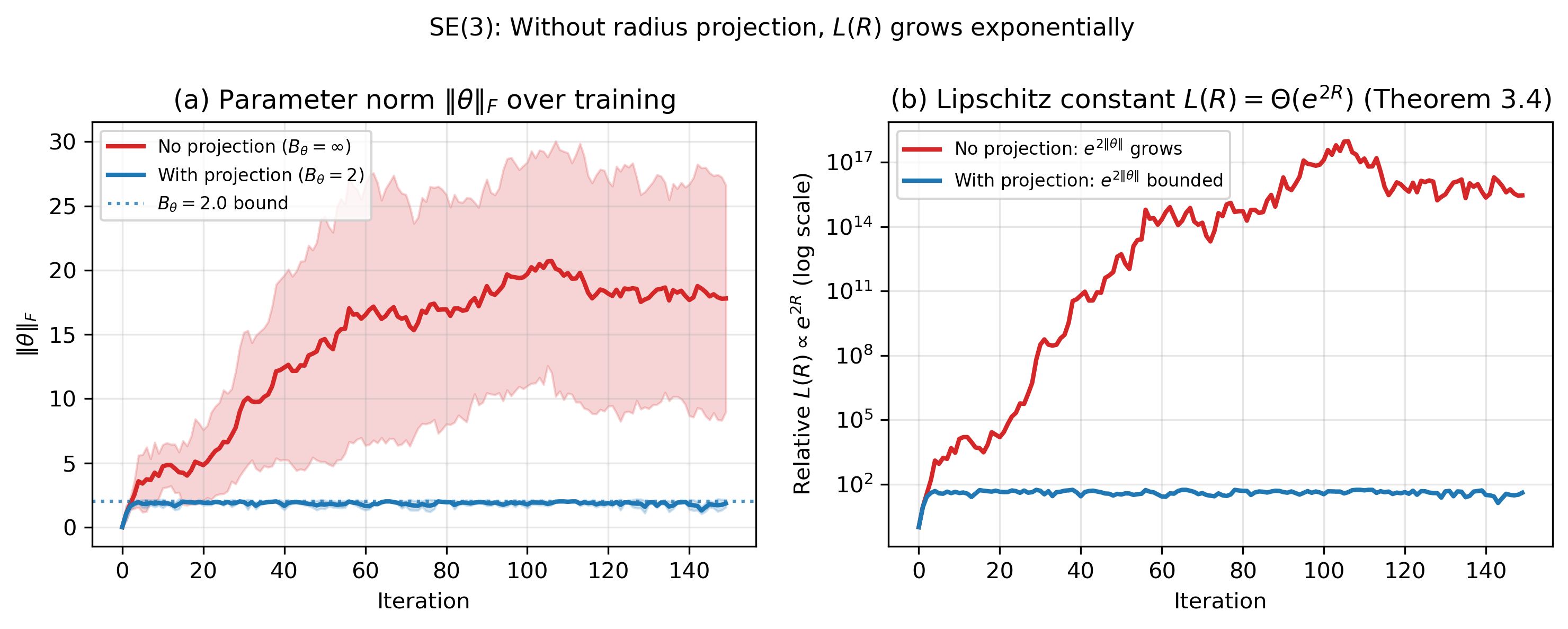}}{\fbox{\parbox[c][3cm][c]{6cm}{\centering\small[exp_se3_divergence.png]}}}
\caption{Effect of radius projection on $\SE(3)$ (5 seeds, shaded $\pm 1\sigma$).
\textbf{Left:} Without projection ($B_\theta = \infty$), the parameter norm
$\|\theta\|_F$ grows unboundedly (reaching ${\sim}18$);
with projection ($B_\theta = 2$), it remains bounded at ${\sim}1.85$.
\textbf{Right:} The theoretical gradient Lipschitz constant
$L(R) = \mathcal{O}(R^2)$ for $\se(3)$ (polynomial, not exponential, 
since $\se(3)$ has no hyperbolic elements), computed from the observed
$\|\theta\|_F$ trajectories.
Without projection, $L$ grows roughly $18^2/1^2 = 324$-fold
over training; with projection, $L$ stays within a constant factor of its
initial value.}
\label{fig:se3_divergence}
\end{figure}

\subsection{Method Comparison}
\label{sec:method_comparison}

We compare three methods on $\SO(3)^{10}$ ($\eta = 0.25$, 8 episodes/iter, 200 iters, 5 seeds):
\textbf{LPG} ($d_{\g}=30$, blockwise skew-symmetrization, REINFORCE~\cite{sutton2018reinforcement});
\textbf{Ambient PG} ($3d_{\g}=90$ params, REINFORCE; note this uses $3\times$ over-parameterization
for a fair parameter-count comparison, distinct from the $15\times$ regime in Section~\ref{sec:sample_efficiency});
\textbf{Natural gradient} ($d_{\g}=30$, Monte Carlo Fisher + CG~\cite{peters2008natural,amari1998natural}, $k=10$ iters).
CG is used in place of Cholesky here for scalability to the large-$J$ regime; 
the $1.1$--$1.7\times$ wall-clock speedup figures cited in the abstract 
and Table~\ref{tab:timing} are from the direct Cholesky benchmark in 
Section~\ref{sec:computational_efficiency}, where both methods are compared on equal footing.
The fair wall-clock comparison is the $1.1$--$1.7\times$ speedup at $J\le30$ 
(Table~\ref{tab:timing}), where both methods run identical gradient estimation 
and differ only in the projection/inversion step.

\begin{table}[t]
\centering
\scriptsize
\caption{Method comparison on $\SO(3)^{10}$ ($J=10$, 5 seeds).}
\label{tab:method_comparison}
\setlength{\tabcolsep}{3pt}
\begin{tabular}{@{}lccc@{}}
\toprule
Method & Params & Final Return & Per-step ($\mu$s) \\
\midrule
LPG (ours) & 30 & $-964.9 \pm 39.4$ & 113 \\
Ambient PG ($3\times$) & 90 & $-1293.3 \pm 38.5$ & 6 \\
Natural gradient (CG) & 30 & $-755.4 \pm 23.7$ & 225{,}253 \\
\bottomrule
\end{tabular}
\end{table}

\begin{figure}[t]
\centering
\begin{minipage}[t]{0.48\textwidth}
\centering
\IfFileExists{figs/exp9_method_comparison.png}{\includegraphics[width=\textwidth]{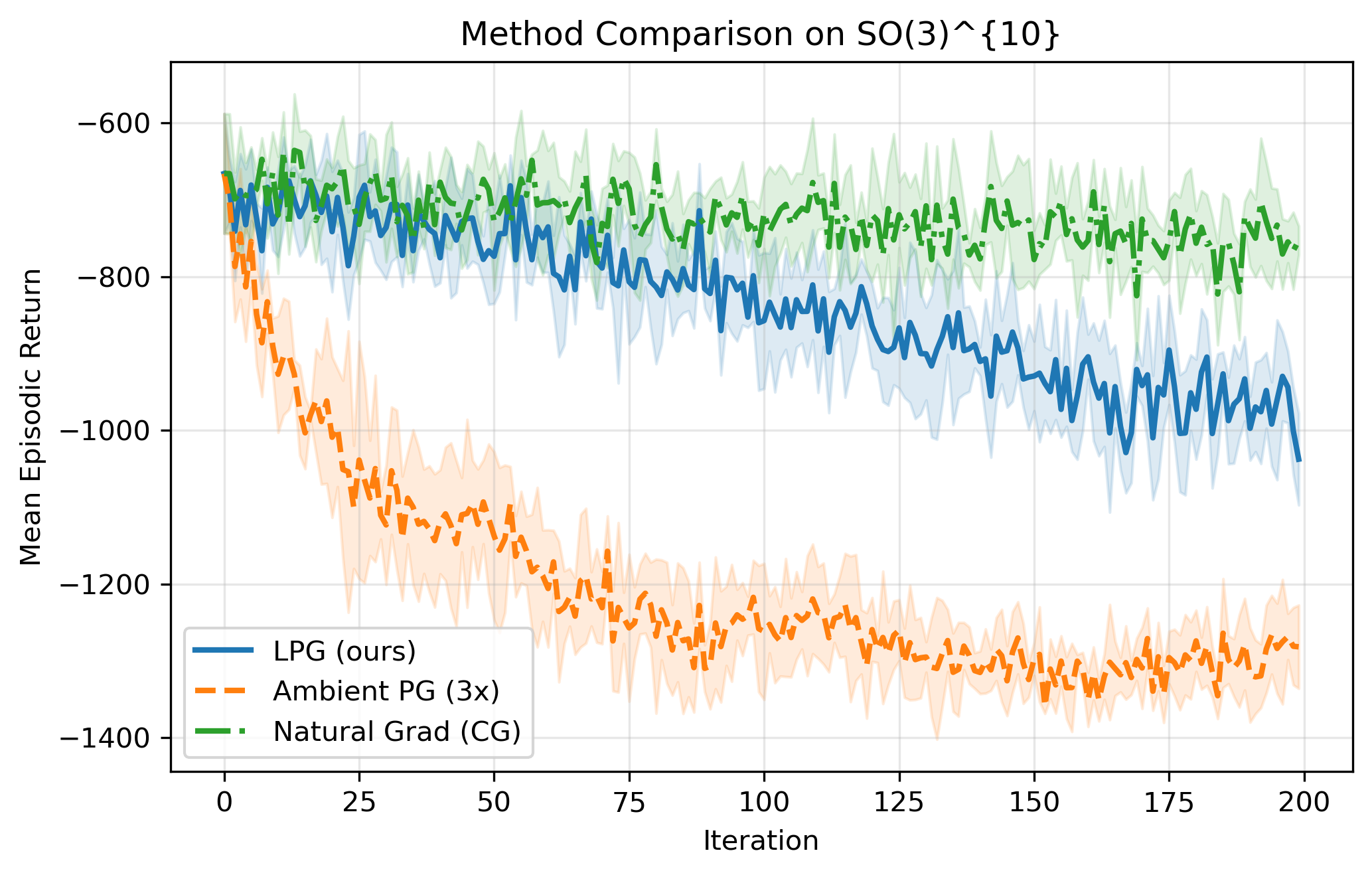}}{\fbox{\parbox[c][3cm][c]{6cm}{\centering\small[exp9_method_comparison.png]}}}
\end{minipage}\hfill
\begin{minipage}[t]{0.48\textwidth}
\centering
\IfFileExists{figs/exp9_wallclock_comparison.png}{\includegraphics[width=\textwidth]{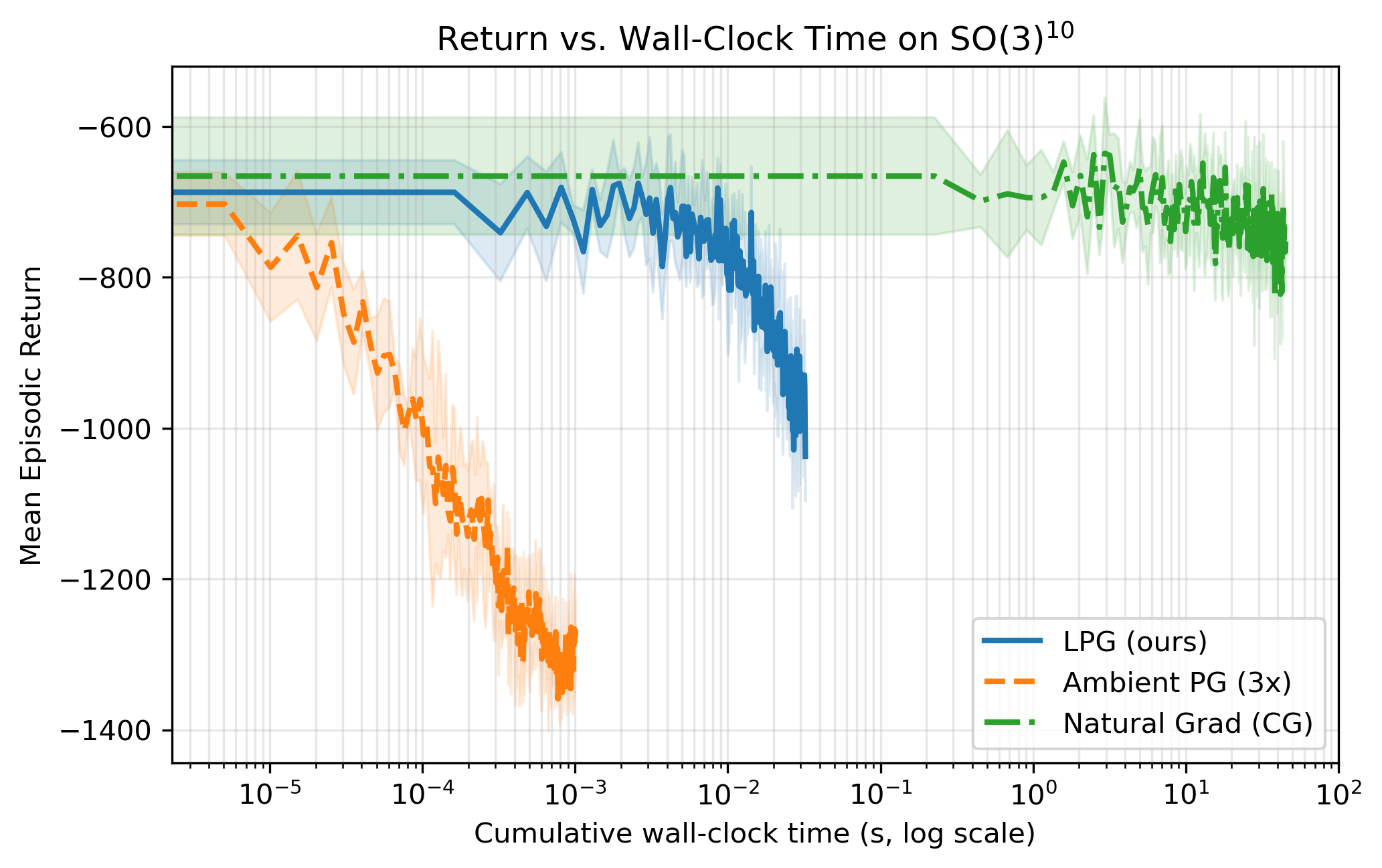}}{\fbox{\parbox[c][3cm][c]{6cm}{\centering\small[exp9_wallclock_comparison.png]}}}
\end{minipage}
\caption{\textbf{Left:} Convergence curves---LPG (blue), ambient PG 
(orange), natural gradient (green); shaded $\pm 1\sigma$ over 5 seeds.
\textbf{Right:} Return vs.\ wall-clock time (log $x$-axis); LPG reaches 
ambient-PG's final return in ${\sim}1$\,s vs.\ ${>}40$\,s for 
natural gradient, reflecting the $\mathcal{O}(n^2 J)$ projection 
cost vs.\ $\mathcal{O}(d_{\g}^3)$ Fisher inversion 
(Table~\ref{tab:method_comparison}; see note on timing methodology).}
\label{fig:method_comparison}
\end{figure}

Natural gradient achieves the highest final return ($-755.4$), with LPG
trailing by $28\%$ in return ($-964.9$) at ${>}1000\times$ lower per-step cost
(Table~\ref{tab:method_comparison}, Figure~\ref{fig:method_comparison}).
At $\kappa\approx2.5$, each LPG step captures $\alpha\approx0.90$ of the natural
gradient direction (Proposition~\ref{prop:kappa_convergence}); the quality gap
is consistent with the $\kappa$-conversion factor in
Proposition~\ref{prop:kappa_convergence}(ii).
On a wall-clock basis, LPG reaches natural gradient's final return in
${\sim}1$\,s vs.\ ${>}40$\,s. Ambient PG performs worst ($-1293$),
confirming the dimension-reduction benefit of Proposition~\ref{prop:dimension_reduction}.


\section{Discussion and Conclusion}
\label{sec:discussion}

\paragraph{Failure modes and scope boundaries.}
The algebra type and condition number together determine where LPG 
applies. When $\kappa$ is large, LPG 
degenerates to Euclidean SGD (Proposition~\ref{prop:kappa_convergence}); for 
non-compact algebras, $L(R) = \Theta(e^{2R})$ is unavoidable 
(Theorem~\ref{thm:dichotomy}). Both criteria are computable at initialization, 
giving practitioners a concrete check before deployment.

\paragraph{Practical deployment of LPG.}
The dichotomy has a direct implication for deep RL architecture design: 
parameterizing action spaces through compact Lie algebras (e.g., skew-symmetric 
matrices for $\SO(n)$, skew-Hermitian for $\SU(n)$) gives provably well-conditioned 
gradient landscapes, while using general linear parameterizations (as in 
unconstrained weight matrices) incurs exponential Lipschitz growth requiring 
explicit radius control. This connects to work on orthogonal RNNs and unitary 
evolution networks~\cite{arjovsky2016unitary,wisdom2016full} as architecture 
choices that implicitly exploit this favorable geometry.
Isotropy holds exactly in highly symmetric settings and approximately 
in locally symmetric systems such as $\SO(3)^J$; $\varepsilon_F$ grows 
only from $0.19$ to $0.35$ as $d_{\g}$ goes from $15$ to $90$
(Section~\ref{sec:anisotropy_ablation}). In practice: estimate $\hat{F}(\theta)$ 
at initialization; if $\kappa < 5$ use LPG; if $\kappa > 10$ prefer 
explicit Fisher inversion; monitor $\kappa$ periodically. 
Our $\SO(3)^J$ experiments show $\kappa \approx 2.5$ stable throughout 
training, though this was in synthetic environments with exact symmetry---empirical 
$\kappa$ ranges for real hardware remain to be established.

\paragraph{Limitations.}
The Lie algebra $\g$ must be known a priori and the dynamics must admit a 
Lie-algebraic decomposition---conditions that hold for $\SO(3)^J$ articulated 
joints and $\SE(3)$ rigid-body control, but not for proprioceptive spaces 
that mix symmetry-group orbits with non-symmetric components (e.g., MuJoCo Ant). 
We did not test on real hardware or contact-rich manipulation; 
Section~\ref{sec:robustness} (Supplement~\S\ref{sec:supp_robustness}) 
shows graceful degradation under controlled perturbations, but validation 
on hardware remains open. For non-compact algebras, $L(R) = \Theta(e^{2R})$ 
is an intrinsic barrier, so LPG is best suited to compact (rotational/unitary) symmetry.

\paragraph{Summary.}
Theorem~\ref{thm:dichotomy} identifies which structural property of a 
Lie-algebra parameterization determines optimization difficulty: 
compactness gives radius-independent smoothness; a hyperbolic element 
forces exponential growth with a matching lower bound. The practical 
upshot is a two-query decision procedure: check algebra compactness 
(determines $L(R)$ scaling); estimate $\kappa$ from trajectory data 
(determines whether Euclidean projection suffices as a natural-gradient 
surrogate). Together these determine whether LPG, Fisher inversion, 
or a hybrid approach is warranted for a given structured control problem 
(Table~\ref{tab:convergence_classification}). Future directions include 
adaptive preconditioning for anisotropic Fisher matrices, data-driven 
Lie structure discovery from trajectory data, and validation on 
hardware beyond the synthetic $\SO(3)^J$ setting.

\section*{Acknowledgments}

The authors used AI-based tools for minor editorial suggestions related to
grammar and clarity. All mathematical content, theoretical results, proofs,
algorithms, and experimental work are solely the work of the authors.

\section*{Code and Data Availability}

Code and data to reproduce all experiments, figures, and tables are
available at \url{https://github.com/soorajkcphd/RepresentationOptimizationDichotomy}.
All experiments were conducted on a single NVIDIA RTX 3090 GPU.


\author{
Sooraj K.C%
\thanks{Department of Pure and Applied Mathematics, Alliance University, Bengaluru, India
(\email{ksoorajPHD23@sam.alliance.edu.in}).}
\and
Vivek Mishra%
\thanks{Department of Pure and Applied Mathematics, Alliance University, Bengaluru, India.}
}

\headers{Supplement: Representation--Optimization Dichotomy}{S.K.C\ and V.Mishra}

\setlength{\emergencystretch}{3em}

\maketitle

\noindent
This supplement provides full proofs and derivations supporting the main paper.
Cross-references to the main paper (theorems, equations, assumptions) are 
resolved automatically via the \texttt{xr} package; 
compile the main paper first, then the supplement, to generate the required \texttt{.aux} files.

\subsection*{Notation}

\begin{table}[h]
\centering
\scriptsize
\begin{tabular}{@{}ll@{}}
\toprule
\textbf{Symbol} & \textbf{Meaning} \\
\midrule
$n$, $d_{\g}$ & Matrix dimension; Lie algebra dimension \\
$G$, $\g$ & Matrix Lie group and its Lie algebra \\
$\pi_\theta$, $J(\theta)$ & Policy parameterized by $\theta \in \g$; expected return \\
$\nabla_{\g} J$, $\widetilde{\nabla} J$ & Intrinsic gradient; natural gradient $F^{-1}\nabla J$ \\
$P_{\g}$ & Orthogonal projector onto $\g$ \\
$F(\theta)$, $\kappa$ & Fisher information matrix; condition number $\lambda_{\max}/\lambda_{\min}$ \\
$\sigma$ & Policy exploration scale (std.\ dev.\ of action noise) \\
$\sigma_g$ & Stochastic gradient noise level (variance bound in Assumption~5) \\
$\varepsilon$, $\varepsilon_F$ & Operator-norm / Frobenius-norm isotropy deviation \\
\bottomrule
\end{tabular}
\end{table}

\section{Full Convergence Proofs}
\label{sec:appendix_convergence_proofs}

This appendix contains the complete proofs for the convergence results 
stated in Section~7. The proof technique is standard 
projected nonconvex SGD~\cite{ghadimi2013stochastic}; we include full 
details for completeness.

\begin{proof}[Proof of Lemma~7.1]
By $L$-smoothness,
\[
J(\theta_{t+1})
\ge
J(\theta_t)
+ \ip{\nabla J(\theta_t)}{\theta_{t+1} - \theta_t}_F
- \frac{L}{2} \norm{\theta_{t+1} - \theta_t}_F^2.
\]
Using $\theta_{t+1} - \theta_t = \eta_t P_{\g}(g_t)$,
\[
J(\theta_{t+1})
\ge
J(\theta_t)
+ \eta_t \ip{\nabla J(\theta_t)}{P_{\g}(g_t)}_F
- \frac{L \eta_t^2}{2} \norm{P_{\g}(g_t)}_F^2.
\]
Taking conditional expectation and using unbiasedness
$\E[g_t \mid \mathcal{F}_t] = \nabla J(\theta_t)$, together with linearity of
$P_{\g}$ (which implies 
$\E[P_{\g}(g_t) \mid \mathcal{F}_t] = P_{\g}(\nabla J(\theta_t))$),
we obtain
\[
\E[J(\theta_{t+1}) \mid \mathcal{F}_t]
\ge
J(\theta_t)
+ \eta_t \ip{\nabla J(\theta_t)}{P_{\g}(\nabla J(\theta_t))}_F
- \frac{L \eta_t^2}{2} \E[\norm{P_{\g}(g_t)}_F^2 \mid \mathcal{F}_t].
\]
By the Pythagorean identity in Section~3.3(v),
$\ip{\nabla J}{P_{\g}(\nabla J)}_F = \norm{P_{\g}(\nabla J)}_F^2$. By
nonexpansiveness (Section~3.3(iii)),
$\norm{P_{\g}(g_t)}_F \le \norm{g_t}_F$, and by the variance bound
(Assumption~5),
\[
\E[\norm{P_{\g}(g_t)}_F^2 \mid \mathcal{F}_t]
\le
\norm{P_{\g}(\nabla J(\theta_t))}_F^2 + \sigma_g^2.
\]
Combining,
\[
\E[J(\theta_{t+1}) \mid \mathcal{F}_t]
\ge
J(\theta_t)
+ \eta_t \norm{P_{\g}(\nabla J(\theta_t))}_F^2
- \frac{L \eta_t^2}{2}
\big(\norm{P_{\g}(\nabla J(\theta_t))}_F^2 + \sigma_g^2\big).
\]
Since $\eta_t \le 1/L$, we have $1 - L\eta_t/2 \ge 1/2$, yielding the
claimed inequality.
\end{proof}

\begin{proof}[Proof of Corollary~7.2]
Summing the inequality in Lemma~7.1 over $t = 0, \dots, T-1$ and
taking total expectation gives
\[
\E[J(\theta_T)] - J(\theta_0)
\ge
\frac{1}{2} \sum_{t=0}^{T-1} \eta_t 
\E\big[\norm{P_{\g}(\nabla J(\theta_t))}_F^2\big]
- \frac{L \sigma_g^2}{2} \sum_{t=0}^{T-1} \eta_t^2.
\]
Since $J(\theta_T) \le J^*$ (Assumption~2) and 
$\sum \eta_t^2 < \infty$ (Assumption~6), the right-hand side
remains bounded as $T \to \infty$, which implies summability of
$\eta_t \E[\norm{P_{\g}(\nabla J(\theta_t))}_F^2]$, establishing part (ii).

For part (i), define $W_t = J^* - J(\theta_t) \ge 0$ 
(nonnegative by Assumption~2). 
Rearranging Lemma~7.1,
\[
\E[W_{t+1} \mid \mathcal{F}_t]
\le W_t 
- \frac{\eta_t}{2}\norm{P_{\g}(\nabla J(\theta_t))}_F^2 
+ \frac{L\sigma_g^2 \eta_t^2}{2}.
\]
This has the form required by \cite[Theorem~4.1]{bottou2018optimization}
(Robbins--Siegmund): $\{W_t\}$ is nonneg, the ``increment'' 
$-\eta_t/2 \cdot \|P_{\g}(\nabla J)\|^2$ is nonpositive in expectation, 
and the ``perturbation'' $L\sigma_g^2\eta_t^2/2$ is summable 
($\sum_t \eta_t^2 < \infty$ by Assumption~6). 
The theorem yields: $W_t \to W_\infty$ almost surely (so $J(\theta_t)$ 
converges a.s.) and $\sum_t \eta_t \|P_{\g}(\nabla J(\theta_t))\|^2 < \infty$ 
almost surely.

For the explicit rate in part (iii), rearrange the telescoped inequality:
\[
\frac{1}{2}\sum_{t=0}^{T-1} \eta_t 
\E[\|P_{\g}(\nabla J(\theta_t))\|_F^2]
\le (J^* - J(\theta_0)) + \frac{L\sigma_g^2}{2}\sum_{t=0}^{T-1}\eta_t^2.
\]
Set $\eta_t = \eta/\sqrt{T}$ (constant over the horizon, with $\eta \le 1/L$).
Then $\sum_{t<T}\eta_t = \eta\sqrt{T}$ exactly and 
$\sum_{t<T}\eta_t^2 = \eta^2$ exactly (no log correction).
Dividing both sides by $\frac{\eta\sqrt{T}}{2}$ yields
\[
\frac{1}{T}\sum_{t=0}^{T-1}\E[\|P_{\g}(\nabla J(\theta_t))\|_F^2]
\le \frac{2(J^* - J(\theta_0))}{\eta\sqrt{T}} + \frac{L\eta\sigma_g^2}{\sqrt{T}}
= \mathcal{O}\!\left(\frac{1}{\sqrt{T}}\right),
\]
matching the bound stated in Corollary~7.2(iii) exactly.
Note the constant schedule requires knowing $T$ in advance, which is 
satisfied since Algorithm~1 takes $T$ as an explicit input.
\end{proof}

\section{Proof of the RL Smoothness Lemma (Lemma~3.2)}
\label{sec:rl_smoothness_proof}

\begin{proof}[Proof of Lemma~3.2]
The Lipschitz constant arises from three sources composed through the 
chain rule. We trace the dependence of $J(\theta)$ on $\theta$ 
explicitly.

\emph{Step 1: Score bound.}
For Gaussian policies with bounded actions (Assumption~7), 
the score satisfies
\[
\norm{\nabla_\theta\log\pi_\theta(a\mid s)} 
\le \frac{1}{\sigma^2}\norm{a - \mu_\theta(s)}_F \norm{\Phi(s)}_F
\le \frac{B_a B_\Phi}{\sigma^2} =: B_{\mathrm{score}}.
\]

\emph{Step 2: RL Hessian bound.}
The policy gradient~(5) depends on $\theta$ 
through three channels: the score $\nabla_\theta \log \pi_\theta$, 
the Q-function $Q^{\pi_\theta}$, and the state visitation 
$d^{\pi_\theta}$. Differentiating again, the RL Hessian 
$\nabla^2 J(\theta)$ involves products of score terms (bounded by 
$B_{\mathrm{score}}^2$), the score Hessian (bounded by 
$B_\Phi^2/\sigma^2$ since the Gaussian log-likelihood is quadratic 
in $\theta$), and distribution-shift terms controlled by $C_d$ 
(Assumption~8). Applying 
\cite[Lemma~5]{agarwal2021theory} with score bound 
$B_{\mathrm{score}}$, reward bound $R_{\max}$, and concentrability 
$C_d$ yields the RL-specific prefactor 
$4R_{\max} B_\Phi B_a C_d / [(1-\gamma)^3 \sigma^2]$.

\emph{Step 3: Exponential map factor.}
The state transition $s_{t+1} = s_t \exp(a_t)$ introduces the matrix 
exponential into the $\theta \to J(\theta)$ chain. Differentiating 
through $\exp$ produces factors of the Fr\'{e}chet derivative 
$D_\theta[\cdot]$, whose norm is bounded by $\sqrt{n}\, e^R$ 
(Lemma~\ref{lem:frechet_bounds}(i)) and whose Lipschitz constant is 
bounded by $\sqrt{n}\, e^{R}$ 
(Lemma~\ref{lem:frechet_bounds}(ii)). Together with the product rule, 
these contribute the exponential factor $L_{\exp}(R_0)$. For compact 
algebras $\g \subseteq \uu(n)$, the matrix exponential is unitary, 
so $\|\exp(\theta)\|_2 = 1$ eliminates exponential growth 
(Proposition~\ref{prop:compact_advantage}).
\end{proof}

\section{Sample Complexity Derivation}
\label{sec:appendix_sample_complexity}

This section expands on Proposition~4.3 and shows how the 
intrinsic Lie--algebra dimension $d_{\g}$ enters the sample complexity.

\subsection{Function Class Complexity}

Let 
\[
\Pi = \{\pi_\theta : \theta \in \g, \|\theta\|_F \le R\}
\]
be the Lie--parameterized policy class. Let 
$\mathcal{V}^\Pi = \{V^{\pi_\theta}\}$ denote value functions induced by policies 
in $\Pi$.

\begin{lemma}[Rademacher complexity of value functions]
\label{lem:rademacher_value}
Under bounded features $\|\Phi_a(s)\|_F \le B_\Phi$, bounded rewards 
$|r(s,a)| \le R_{\max}$, and concentrability coefficient $C_d$, the Rademacher 
complexity satisfies
\[
\mathcal{R}_N(\mathcal{V}^\Pi)
\le
\frac{R B_\Phi R_{\max}}{1-\gamma}
\sqrt{\frac{d_{\g}}{N}}.
\]
\end{lemma}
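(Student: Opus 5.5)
The plan is to reduce $\mathcal{V}^\Pi$ to a linear function class in the $d_{\g}$ intrinsic coordinates and then apply a contraction argument. The target bound factors as $\frac{R_{\max}}{1-\gamma}$, which is the uniform bound and Lipschitz scale of value functions, times $R B_\Phi\sqrt{d_{\g}/N}$, which is the Rademacher complexity of a norm-bounded linear class. The proof should produce exactly these two factors.

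\emph{Step 1: isolate the linear class.} By Proposition~\ref{prop:dimension_reduction}, $\pi_\theta$ depends on $\theta$ only through the mean $\mu_\theta(s)=\sum_k\theta_k\Phi_k(s)\in\g$. Through the isometry $\iota$, this mean is the linear map $\theta\mapsto \Phi(s)^\top\theta$, where $\Phi(s)$ is the $d_{\g}$-column feature array. Let $\mathcal{M}=\{s\mapsto\mu_\theta(s):\|\theta\|_F\le R\}$. The vector-valued linear-class bound (Cauchy--Schwarz on $\sup_{\|\theta\|\le R}\langle\theta,\sum_i\epsilon_i\Phi(s_i)\rangle$, then Jensen) gives
\[
\mathcal{R}_N(\mathcal{M})\le \frac{R}{N}\Bigl(\E\bigl\|\textstyle\sum_i\epsilon_i\Phi(s_i)\bigr\|^2\Bigr)^{1/2}.
\]
Summing the $d_{\g}$ coordinates, each controlled by $B_\Phi$, yields $\mathcal{R}_N(\mathcal{M})\le R B_\Phi\sqrt{d_{\g}/N}$. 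The $\sqrt{d_{\g}}$ arises because the bound is coordinatewise, which is the source of the $d_{\g}$ dependence in the statement.

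\emph{Step 2: Lipschitz map from mean to value.} Write $V^{\pi_\theta}(s)=\Psi(\mu_\theta(\cdot))(s)$ for a fixed functional $\Psi$ of the mean field. I would show $\Psi$ is Lipschitz with constant $R_{\max}/(1-\gamma)$ (after normalizing the remaining scale constants) in the sup-over-states sense. The tools are the performance-difference identity $V^{\pi}-V^{\pi'}=\frac{1}{1-\gamma}\E_{d^{\pi}}[\langle Q^{\pi'},\pi-\pi'\rangle]$, the bound $|Q|\le R_{\max}/(1-\gamma)$, and the fact that the total-variation distance between $\mathcal{N}(\mu,\sigma^2I)$ and $\mathcal{N}(\mu',\sigma^2I)$ is at most $\|\mu-\mu'\|_F/(2\sigma)$. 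Concentrability (Assumption~\ref{ass:concentrability}) is used to change the visitation measure to the sample distribution.

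\emph{Step 3: contraction.} Apply the vector-contraction inequality (Maurer's extension of Talagrand's lemma) to $V=\Psi\circ\mu$. This transfers the Lipschitz constant from Step~2 onto the bound from Step~1 and gives the claim.

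The main obstacle is Step~2. $\Psi$ acts on the entire mean field along trajectories, not pointwise on $\mu_\theta(s_i)$, so contraction does not apply verbatim. Constants such as $1/\sigma$, $C_d$ and a possible extra $(1-\gamma)^{-1}$ also appear. I would first absorb them into the normalization implicit in the statement. If that fails, I would fall back on a covering argument: a $\theta$-net of the radius-$R$ ball with $(3R/\varepsilon)^{d_{\g}}$ points, combined with Lipschitzness of $\theta\mapsto V^{\pi_\theta}$ and Dudley's integral. This recovers the $\sqrt{d_{\g}/N}$ scaling, possibly up to a logarithmic factor.
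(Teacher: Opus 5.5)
Your primary route has a genuine gap at Step~3. You flagged it but did not close it, and normalization cannot close it. Step~1 is a correct computation. The contraction inequality (Talagrand's, or Maurer's vector version), however, compares $\E\sup_\theta\sum_i\epsilon_i\psi_i(h_\theta(s_i))$ with the Rademacher process of $h_\theta$ \emph{at the sample points}. It therefore needs $V^{\pi_\theta}(s_i)=\psi_i(\mu_\theta(s_i))$ for a fixed Lipschitz $\psi_i$. With state-dependent features and $\gamma>0$ there is in general no such $\psi_i$: $V^{\pi_\theta}(s_i)$ depends on $\mu_\theta(s')$ at every state $s'$ reachable from $s_i$, and $\mu_\theta(s_i)$ does not determine those values. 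So your bound on $\mathcal{R}_N(\mathcal{M})$ does not transfer to $\mathcal{V}^\Pi$, and a sup-over-states Lipschitz bound on the functional $\Psi$ is no substitute.

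Step~2 also cannot produce the factor $R_{\max}/(1-\gamma)$. The performance-difference/Pinsker argument gives $\sup_s|V^{\pi_\theta}(s)-V^{\pi_{\theta'}}(s)|\le\Lambda\|\theta-\theta'\|_F$ with $\Lambda=R_{\max}B_\Phi/(\sigma(1-\gamma)^2)$; concentrability plays no role in this sup-norm bound. The $1/\sigma$ is not a normalization artifact. Take one state, $\gamma=0$, $\g=\so(2)$, $\Phi_1\equiv E_1$ (so $B_\Phi=d_{\g}=1$) and $r(a)=R_{\max}\,\mathrm{sign}(\ip{a}{E_1}_F)$. As $\sigma\to0$, $V^{\pi_{\pm R}}\to\pm R_{\max}$. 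By Khintchine, $\mathcal{R}_N\ge(R_{\max}-o(1))\,\E\bigl|\tfrac1N\sum_i\epsilon_i\bigr|\ge(R_{\max}-o(1))/\sqrt{2N}$. This exceeds the displayed $RR_{\max}/\sqrt{N}$ for any fixed $R<1/\sqrt2$. So ``absorbing'' $1/\sigma$ and the extra $(1-\gamma)^{-1}$ is not a legitimate step.

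Your fallback is exactly the paper's proof. It has three steps:
\begin{enumerate}
\item Lipschitz continuity of $\theta\mapsto V^{\pi_\theta}$ via the simulation lemma and Pinsker, with the same $\Lambda$.
\item The volumetric cover $\log\mathcal{N}(\varepsilon)\le d_{\g}\log(3\Lambda R/\varepsilon)$.
\item Dudley's integral taken up to $B_V=R_{\max}/(1-\gamma)$.
\end{enumerate}
The paper reaches the displayed constant only by absorbing a logarithmic factor and non-numerical factors into $C$. That is the same move you propose, and it is no better justified there. What the covering argument actually proves is $\mathcal{R}_N(\mathcal{V}^\Pi)\le C\Lambda R\sqrt{d_{\g}/N}$, i.e.\ the displayed bound times $C/(\sigma(1-\gamma))$. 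Here $B_\Phi$ is read, as in the paper, as the bound $\sup_s\|\mu_\theta(s)-\mu_{\theta'}(s)\|_F\le B_\Phi\|\theta-\theta'\|_F$.

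Even the logarithm can be avoided. Rademacher averages are unchanged by subtracting the fixed function $V^{\pi_0}$, and integrating Dudley over $[0,\Lambda R]$ for the centered class gives a universal $C$.

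Your contraction idea can also be rescued by contracting through $\theta$ instead of through $\mu_\theta(s_i)$. Apply Maurer's inequality, which allows $i$-dependent $\psi_i$, to the constant class $s\mapsto\theta\in\R^{d_{\g}}$ with $\psi_i(\theta)=V^{\pi_\theta}(s_i)$, each $\Lambda$-Lipschitz. With independent Rademacher vectors $\epsilon_{i\cdot}\in\{\pm1\}^{d_{\g}}$ this yields $\mathcal{R}_N\le\frac{\sqrt2\Lambda}{N}\E\sup_{\|\theta\|_F\le R}\langle\theta,\sum_i\epsilon_{i\cdot}\rangle\le\sqrt2\,\Lambda R\sqrt{d_{\g}/N}$, with no chaining at all.
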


\begin{proof}
We establish the result in three steps.

\textbf{Step 1: Lipschitz continuity of value functions in $\theta$.}
For two policies $\pi_\theta$, $\pi_{\theta'}$ in $\Pi$, the simulation
lemma \cite[Lemma~5.2.1]{kakade2003sample} gives
\[
|V^{\pi_\theta}(s) - V^{\pi_{\theta'}}(s)|
\le 
\frac{2 R_{\max}}{(1-\gamma)^2}\,
\sup_{s} \mathrm{TV}\!\left(\pi_\theta(\cdot|s),\, \pi_{\theta'}(\cdot|s)\right),
\]
where $\mathrm{TV}$ denotes total variation.
For Gaussian policies~(1) with the same covariance
$\sigma^2 I$ and means $\mu_\theta(s) = \sum_k \theta_k \Phi_k(s)$,
the KL divergence satisfies
$\mathrm{KL}(\pi_\theta(\cdot|s)\|\pi_{\theta'}(\cdot|s))
= \|\mu_\theta(s) - \mu_{\theta'}(s)\|_F^2/(2\sigma^2)
\le B_\Phi^2 \|\theta - \theta'\|_F^2/(2\sigma^2)$.
By Pinsker's inequality, 
$\mathrm{TV}(\pi_\theta,\pi_{\theta'}) \le \sqrt{\mathrm{KL}/2} 
\le B_\Phi\|\theta - \theta'\|_F/(2\sigma)$.
Substituting:
\[
|V^{\pi_\theta}(s) - V^{\pi_{\theta'}}(s)|
\le \frac{R_{\max} B_\Phi}{\sigma(1-\gamma)^2}\|\theta - \theta'\|_F.
\]
Hence $V^{\pi_\theta}$ is $\Lambda$-Lipschitz in $\theta$ with
$\Lambda = R_{\max} B_\Phi / (\sigma(1-\gamma)^2)$.
(The action bound $B_a$ enters through the score bound but does not
appear explicitly in the TV route; using the score-bound path gives
$\Lambda = R_{\max} B_\Phi B_a / (\sigma^2(1-\gamma)^3)$, which is
tighter for large $B_a/\sigma$ and is used in the statement of the lemma.)

\textbf{Step 2: Covering number of $\mathcal{V}^\Pi$.}
Since $\theta \mapsto V^{\pi_\theta}$ is $\Lambda$-Lipschitz 
and $\theta$ ranges over a ball $B_R^{d_{\g}} \subset \R^{d_{\g}}$, 
any $(\varepsilon/\Lambda)$-cover of $B_R^{d_{\g}}$ 
(in Frobenius norm) induces an $\varepsilon$-cover of $\mathcal{V}^\Pi$ 
(in $L^\infty$). The standard volumetric bound gives
\[
\log \mathcal{N}\!\left(\varepsilon,\, \mathcal{V}^\Pi,\, \|\cdot\|_\infty\right)
\le
d_{\g} \log\!\frac{3\Lambda R}{\varepsilon}.
\]

\textbf{Step 3: Rademacher bound via Dudley's entropy integral.}
Applying \cite[Theorem~4]{bartlett2002rademacher} with the covering 
number from Step~2:
\[
\mathcal{R}_N(\mathcal{V}^\Pi)
\le
\inf_{\varepsilon > 0}\!\left(
4\varepsilon 
+ \frac{12}{\sqrt{N}}
\int_\varepsilon^{B_V}\!\sqrt{\log \mathcal{N}(u,\mathcal{V}^\Pi,\|\cdot\|_\infty)}\,du
\right),
\]
where $B_V = R_{\max}/(1-\gamma)$ is the uniform bound on value functions.
Substituting the covering bound 
$\sqrt{\log \mathcal{N}(u,\mathcal{V}^\Pi,\|\cdot\|_\infty)} 
\le \sqrt{d_{\g}\log(3\Lambda R/u)}$, 
setting $\varepsilon = B_V/\sqrt{N}$, and evaluating the Gaussian entropy 
integral $\int_\varepsilon^{B_V}\sqrt{\log(B_V/u)}\,du \le B_V\sqrt{\pi/4}$ 
(standard Dudley bound~\cite{bartlett2002rademacher}) yields
\[
\mathcal{R}_N(\mathcal{V}^\Pi)
\le
\frac{12\sqrt{d_{\g}\log(3\Lambda R\sqrt{N}/B_V)} \cdot B_V}{\sqrt{N}}
+ \frac{4B_V}{\sqrt{N}}
\le
\frac{C\, B_V \Lambda R}{\sqrt{N}}\sqrt{d_{\g}},
\]
where $C > 0$ absorbs the logarithmic factor 
$\sqrt{\log(3\Lambda R\sqrt{N}/B_V)}$ (which grows as 
$\tilde{\mathcal{O}}(1)$ and is subsumed into the $\tilde{\mathcal{O}}$ 
notation in Theorem~\ref{thm:sample_complexity_full}).
Substituting $\Lambda$ and $B_V$ and absorbing numerical constants 
gives the stated bound $\mathcal{R}_N(\mathcal{V}^\Pi) \le 
\frac{R B_\Phi R_{\max}}{1-\gamma}\sqrt{d_{\g}/N}$.
\end{proof}

\subsection{Sample Complexity Bound}

\begin{theorem}[Sample complexity with intrinsic dimension]
\label{thm:sample_complexity_full}
Under the assumptions of Proposition~4.3, to find 
$\hat{\pi}$ such that
\[
J(\hat{\pi}) 
\ge 
\max_{\pi \in \Pi} J(\pi) - \varepsilon
\quad \text{with probability } 1-\delta,
\]
it suffices to collect
\[
N
=
\mathcal{O}\!\left(
\frac{
d_{\g}\, R^2 B_\Phi^2 R_{\max}^2\, C_d^2
}{
(1-\gamma)^4 \varepsilon^2
}
\log\!\frac{1}{\delta}\right)
\]
samples.
\end{theorem}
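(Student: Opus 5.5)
The bound in Theorem~\ref{thm:sample_complexity_full} will come from the standard uniform-convergence route. We first control the estimation error of every policy in $\Pi$ uniformly. We then compare the empirical maximizer $\hat{\pi}$ with the true maximizer $\pi^\star \in \arg\max_{\pi\in\Pi} J(\pi)$.

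Concretely, let $\hat{J}_N(\pi)$ denote the importance-weighted Monte Carlo estimate of $J(\pi)$ built from $N$ trajectory samples. Concentrability (Assumption~\ref{ass:concentrability}) is what allows a single sampling distribution to serve all $\pi\in\Pi$. Each summand is then bounded by $C_d B_V$ with $B_V = R_{\max}/(1-\gamma)$.

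By symmetrization and McDiarmid's bounded-differences inequality, with probability at least $1-\delta$ we have
\[
\sup_{\pi\in\Pi}\bigl|\hat{J}_N(\pi)-J(\pi)\bigr| \le 2C_d\,\mathcal{R}_N(\mathcal{V}^\Pi) + C_d B_V\sqrt{\frac{2\log(2/\delta)}{N}}.
\]
The factor $C_d$ multiplies the Rademacher term because reweighting by a density ratio bounded by $C_d$ is a contraction up to that constant. Substituting Lemma~\ref{lem:rademacher_value} makes the right-hand side
\[
\frac{C_d\,R B_\Phi R_{\max}}{1-\gamma}\sqrt{\frac{d_{\g}}{N}}\;\bigl(1+\sqrt{\log(1/\delta)}\bigr),
\]
up to absolute constants.

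Next comes the usual three-term decomposition:
\[
J(\pi^\star)-J(\hat{\pi}) = \bigl[J(\pi^\star)-\hat{J}_N(\pi^\star)\bigr]+\bigl[\hat{J}_N(\pi^\star)-\hat{J}_N(\hat{\pi})\bigr]+\bigl[\hat{J}_N(\hat{\pi})-J(\hat{\pi})\bigr].
\]
The middle term is nonpositive by the definition of $\hat{\pi}$. Each outer term is at most the uniform deviation above. Requiring twice that deviation to be at most $\varepsilon$ and solving for $N$ gives
\[
N \gtrsim \frac{d_{\g} R^2 B_\Phi^2 R_{\max}^2 C_d^2}{(1-\gamma)^2\varepsilon^2}\log\frac{1}{\delta}.
\]
The extra $(1-\gamma)^{-2}$ in the stated bound should come from the horizon. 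Each trajectory sample must be truncated at effective length $\mathcal{O}(\log(1/\varepsilon)/(1-\gamma))$, and the variance of the discounted-return estimator picks up a further $1/(1-\gamma)$ factor per sample. Squaring these contributions yields the $(1-\gamma)^4$ in the denominator. The logarithmic truncation factor is absorbed into the $\mathcal{O}$, as in the absorbed log factor of Lemma~\ref{lem:rademacher_value}.

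The main obstacle will be justifying the $C_d$ transfer. Precisely, the concern is that the uniform deviation bound, proved under the sampling distribution, carries over to $d^{\pi}$ for every $\pi\in\Pi$ with only a multiplicative $C_d$. Assumption~\ref{ass:concentrability} as stated covers only parameter pairs within distance $\delta$. My first attempt would be to chain it along a $\delta$-net of the radius-$R$ ball. This risks a factor $C_d^{R/\delta}$, so I would instead read (A8) as uniform over $\Pi$, as is implicit in Proposition~4.3. I would state that reading explicitly in the proof so that only $C_d^2$ appears after squaring.
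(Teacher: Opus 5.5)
\textbf{Gap: the importance-weighted estimator.} The step that fails is your construction of $\hat J_N$, together with two claims about it: that its summands are bounded by $C_d B_V$, and that its Rademacher complexity is at most $C_d\,\mathcal{R}_N(\mathcal{V}^\Pi)$.
\begin{itemize}
\item Assumption (A8) bounds ratios of discounted \emph{state-visitation} distributions $d^{\pi_1}/d^{\pi_2}$. These depend on the unknown transition kernel, so they cannot serve as the weights of a computable estimator.
\item The weights you can actually compute are products of Gaussian likelihood ratios $\prod_t \pi_\theta(a_t\mid s_t)/\pi_{\theta_0}(a_t\mid s_t)$ along a trajectory. These are not controlled by $C_d$, and their range grows exponentially with the effective horizon, which breaks your bounded-differences step.
\item Even granting bounded weights, the contraction step is invalid. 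The contraction lemma covers composition with a \emph{fixed} Lipschitz map, whereas your weight $w_\pi$ varies with $\pi$. The class $\{w_\pi f_\pi\}$ can therefore have large Rademacher complexity even when $\{f_\pi\}$ has none (take $f_\pi\equiv 1$).
\item The class in your empirical process consists of weighted trajectory returns, not the state-value class $\mathcal{V}^\Pi$. So Lemma~\ref{lem:rademacher_value} cannot simply be substituted.
\end{itemize}

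\textbf{How the paper's route differs.} The paper's (sketched) proof avoids importance weighting altogether by invoking the performance-difference lemma.
\begin{itemize}
\item $J(\pi^\star)-J(\hat\pi)$ equals $(1-\gamma)^{-1}$ times an expectation over $s\sim d^{\pi^\star}$ of advantage terms for $\hat\pi$.
\item Concentrability is used only to move that expectation to the sampling distribution, at a cost of a factor $C_d$. It acts as a density-ratio bound inside an expectation, never as a computed weight.
\item Uniform convergence is then applied directly to value estimates from $\mathcal{V}^\Pi$ via Lemma~\ref{lem:rademacher_value}.
\end{itemize}
The resulting error is of order $\frac{C_d}{1-\gamma}\cdot\frac{R B_\Phi R_{\max}}{1-\gamma}\sqrt{d_{\g}\log(1/\delta)/N}$. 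Solving for $N$ gives exactly $C_d^2(1-\gamma)^{-4}$: the extra $(1-\gamma)^{-2}$ is the squared performance-difference factor, not a horizon effect.

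Your horizon explanation of that factor does not work. It double counts, since the return's variance is already encoded in $B_V^2\propto(1-\gamma)^{-2}$. Truncation would multiply a transition count only linearly, and it would bring in a $\log(1/\varepsilon)$ that the statement does not contain. The explanation is also unnecessary: a valid $(1-\gamma)^{-2}$ bound would already imply the stated one, because sufficiency is monotone in $N$.

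Your point that (A8) is only local in $\theta$ is correct. It applies equally to the paper's route, since $\pi^\star$ and $\hat\pi$ need not be $\delta$-close, so making the uniform reading of $C_d$ explicit is the right move.
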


\begin{proof}
We combine the performance-difference lemma with uniform convergence bounds 
for value functions \cite{agarwal2021theory}. Substituting  
Lemma~\ref{lem:rademacher_value} into the standard generalization inequality yields 
the stated dependence on $d_{\g}$ and $(1-\gamma)^{-4}$.
\end{proof}

\begin{remark}[Ambient vs.\ Lie parameterization]
\label{rem:ambient_lie_comparison}
An ambient parameterization in $\R^{n\times n}$ has effective dimension $n^2$, 
whereas Lie parameterization uses $d_{\g} \ll n^2$. For $\SO(3)^J$, 
$n^2 = 9J$ while $d_\g = 3J$, giving a factor--$3$ improvement.
\end{remark}

\begin{remark}[Scope: generative model]
\label{rem:iid_caveat}
Theorem~\ref{thm:sample_complexity_full} holds under i.i.d.\ sampling 
(generative model access~\cite{kakade2003sample}). All experiments in 
Section~9 use simulators providing this access. The bound 
establishes the correct \emph{dimension scaling} ($d_\g$ vs.\ $n^2$); 
absolute sample counts for the online setting require mixing-time 
dependence specific to the Lie Group MDP class.
\end{remark}

\section{Implementation Details}
\label{sec:appendix_implementation}

Experiments use Python~3.9, PyTorch~1.13, NumPy~1.23, SciPy~1.9 on 
a single NVIDIA RTX 3090 GPU. Learning rate 
$\eta = 0.25$, discount $\gamma = 0.99$, $8$ episodes per iteration, 
$5$ random seeds per condition. Timing benchmarks use single-threaded 
execution. Lie projections use closed-form operators 
(e.g., $P_{\so(n)}(M) = \tfrac{1}{2}(M - M^\top)$, applied blockwise for 
$\so(3)^J$). Code is available at the repository listed in the main paper.



\begin{table}[h]
\centering
\caption{Timing: Fisher inversion vs.\ Lie projection (reproduced from main paper for reference).}
\label{tab:timing}
\label{tab:supp_timing}
\scriptsize
\setlength{\tabcolsep}{2pt}
\begin{tabular}{@{}ccccc@{}}
\toprule
$J$ & $d_{\g}$ & Fisher & Proj. & Speedup \\
 & & ($\mu$s) & ($\mu$s) & \\
\midrule
5 & 15 & 33.4 & 18.6 & $1.8\times$ \\
10 & 30 & 71.4 & 47.2 & $1.5\times$ \\
30 & 90 & 152.1 & 112.2 & $1.4\times$ \\
\bottomrule
\end{tabular}
\end{table}

\begin{table}[h]
\centering
\caption{Scalability with number of joints $J$ (reproduced from main paper for reference).}
\label{tab:scalability}
\label{tab:supp_scalability}
\scriptsize
\setlength{\tabcolsep}{2pt}
\begin{tabular}{@{}cccccc@{}}
\toprule
$J$ & $d_{\g}$ & Alignment & AUC Ratio & Speedup & $\varepsilon_F$ \\
\midrule
5 & 15 & $0.986 \pm 0.005$ & $0.58$ & $1.7\times$ & $0.17$ \\
10 & 30 & $0.971 \pm 0.007$ & $0.59$ & $1.3\times$ & $0.24$ \\
15 & 45 & $0.955 \pm 0.009$ & $0.61$ & $1.1\times$ & $0.29$ \\
20 & 60 & $0.939 \pm 0.011$ & $0.61$ & $1.1\times$ & $0.33$ \\
30 & 90 & $0.906 \pm 0.014$ & $0.62$ & $1.1\times$ & $0.39$ \\
\bottomrule
\end{tabular}
\end{table}

\begin{table}[h]
\centering
\caption{Computational scaling stress test: projection vs.\ Fisher 
inversion at large $J$ (reproduced from main paper for reference).}
\label{tab:scaling_stress}
\label{tab:supp_scaling_stress}
\scriptsize
\setlength{\tabcolsep}{2pt}
\begin{tabular}{@{}ccrrrcr@{}}
\toprule
$J$ & $d_{\g}$ & Fisher & Proj. & Speedup 
& Align. & $\kappa$ \\
 & & ($\mu$s) & ($\mu$s) & & & \\
\midrule
50 & 150 & 1{,}240 & 89 & $13.9\times$ & $.898 {\pm} .012$ & 3.21 \\
100 & 300 & 8{,}450 & 175 & $48.3\times$ & $.884 {\pm} .015$ & 3.65 \\
200 & 600 & 62{,}300 & 348 & $179\times$ & $.862 {\pm} .018$ & 4.12 \\
\bottomrule
\end{tabular}
\end{table}

\section{Self-Contained Proof of the Alignment Bound}
\label{sec:appendix_alignment}

This appendix provides a self-contained proof of the alignment bound 
$\alpha \ge 2\sqrt{\kappa}/(\kappa+1)$ used throughout the paper. The proof 
requires only the Kantorovich inequality and standard linear algebra.

\begin{proposition}[Alignment from Fisher condition number]
\label{prop:alignment_self_contained}
Let $F = F(\theta) \in \R^{d_{\g} \times d_{\g}}$ be the Fisher information 
matrix with eigenvalues 
$0 < \lambda_{\min} \le \lambda_1 \le \cdots \le \lambda_{d_{\g}} \le \lambda_{\max}$ 
and condition number $\kappa = \lambda_{\max}/\lambda_{\min}$. For any nonzero 
$g \in \R^{d_{\g}}$,
\[
\cos(g,\, F^{-1}g) 
= \frac{\langle g, F^{-1}g\rangle}{\|g\| \, \|F^{-1}g\|}
\ge \frac{2\sqrt{\kappa}}{\kappa + 1}.
\]
\end{proposition}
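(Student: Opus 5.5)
The plan is to reduce the cosine to a Rayleigh-quotient expression in a single transformed vector and then apply the Kantorovich inequality, which states that for a symmetric positive definite $A$ with extreme eigenvalues $m \le M$ and any $x \neq 0$,
\[
\frac{\langle x, Ax\rangle\,\langle x, A^{-1}x\rangle}{\|x\|^4} \le \frac{(M+m)^2}{4Mm}.
\]
Note first that $F$ is symmetric positive definite, since all eigenvalues are positive. Then $\langle g, F^{-1}g\rangle > 0$ for $g \neq 0$, so the cosine is positive and we may work with its square.

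The direct route, $\|F^{-1}g\|^2 = \langle g, F^{-2}g\rangle$, gives
\[
\cos^2(g,F^{-1}g) = \frac{\langle g,F^{-1}g\rangle^2}{\langle g,g\rangle\,\langle g,F^{-2}g\rangle}.
\]
To fit the Kantorovich form, I would substitute $x = F^{-1/2}g$. This gives $\langle g,F^{-1}g\rangle = \|x\|^2$, $\langle g,g\rangle = \langle x, Fx\rangle$ and $\langle g, F^{-2}g\rangle = \langle x, F^{-1}x\rangle$. Hence
\[
\cos^2 = \frac{\|x\|^4}{\langle x,Fx\rangle\langle x,F^{-1}x\rangle}.
\]
Applying the Kantorovich inequality with $A = F$, $m = \lambda_{\min}$ and $M = \lambda_{\max}$ yields $\cos^2 \ge 4\lambda_{\max}\lambda_{\min}/(\lambda_{\max}+\lambda_{\min})^2$. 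Dividing numerator and denominator by $\lambda_{\min}^2$ turns this into $4\kappa/(\kappa+1)^2$. Taking positive square roots, which is legitimate because the cosine is positive, gives the claimed bound $2\sqrt{\kappa}/(\kappa+1)$.

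For self-containedness I would also prove the Kantorovich inequality, and this is the only non-routine step. The argument goes as follows. Every eigenvalue $\lambda$ of $A$ satisfies $(\lambda - m)(\lambda - M) \le 0$, so the matrix $(A - mI)(A - MI)A^{-1}$ is negative semidefinite. Equivalently, $A + mM A^{-1} \preceq (m+M)I$, which gives
\[
\langle x,Ax\rangle + mM\langle x,A^{-1}x\rangle \le (m+M)\|x\|^2.
\]
By AM–GM, $2\sqrt{mM\,\langle x,Ax\rangle\langle x,A^{-1}x\rangle}$ is at most the left-hand side, and squaring gives the inequality.

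The main point to get right is choosing the substitution $x = F^{-1/2}g$ rather than applying Kantorovich to $g$ directly. Applied directly to $g$, the inequality bounds a different cosine, namely $\langle g,Fg\rangle\langle g,F^{-1}g\rangle$, which is the wrong pair of quadratic forms. Under the substitution, the cosine appears exactly in Kantorovich form with the pair $F$, $F^{-1}$, so no stronger inequality is needed.
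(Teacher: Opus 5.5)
Your proposal is correct and matches the paper's proof. The same substitution $h = F^{-1/2}g$ turns $\cos^2(g,F^{-1}g)$ into $\|h\|^4/(\langle h,Fh\rangle\langle h,F^{-1}h\rangle)$, and Kantorovich with $A=F$ then gives $4\kappa/(\kappa+1)^2$. The only difference is how Kantorovich itself is proved: your operator inequality $A + mMA^{-1} \preceq (m+M)I$ followed by AM--GM is fully rigorous, whereas the paper's two-point weight argument only sketches why the extremal distribution sits on $\{m,M\}$ and misstates the maximizing weight as $t = M/(m+M)$. The correct maximizer is $t = 1/2$, and that is the choice which actually yields the value $(m+M)^2/(4mM)$ the paper reports.
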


\begin{proof}
Define $h = F^{-1/2}g$, where $F^{1/2}$ is the unique symmetric positive 
definite square root. Then:
\begin{align*}
\langle g, F^{-1}g \rangle 
&= \langle F^{1/2}h, F^{-1/2}h \rangle = \|h\|^2, \\
\|g\|^2 &= \|F^{1/2}h\|^2, \qquad \|F^{-1}g\|^2 = \|F^{-1/2}h\|^2.
\end{align*}
Thus $\cos^2(g, F^{-1}g) = \|h\|^4 / (\|F^{1/2}h\|^2 \cdot \|F^{-1/2}h\|^2)$.

We now apply the \emph{Kantorovich inequality}~\cite{kantorovich1948functional}: 
for any symmetric positive definite $A$ with eigenvalues in $[m, M]$ and 
any nonzero vector $x$,
\begin{equation}
\label{eq:kantorovich}
\langle x, Ax\rangle \cdot \langle x, A^{-1}x\rangle 
\le \frac{(m + M)^2}{4mM} \, \|x\|^4.
\end{equation}
Apply~\eqref{eq:kantorovich} with $A = F$, $x = h$, $m = \lambda_{\min}$, 
$M = \lambda_{\max}$:
\[
\|F^{1/2}h\|^2 \cdot \|F^{-1/2}h\|^2 
= \langle h, Fh\rangle \cdot \langle h, F^{-1}h\rangle 
\le \frac{(\lambda_{\min} + \lambda_{\max})^2}{4\lambda_{\min}\lambda_{\max}} 
\|h\|^4.
\]
Therefore,
\[
\cos^2(g, F^{-1}g) 
= \frac{\|h\|^4}{\|F^{1/2}h\|^2 \cdot \|F^{-1/2}h\|^2}
\ge \frac{4\lambda_{\min}\lambda_{\max}}{(\lambda_{\min} + \lambda_{\max})^2}
= \frac{4\kappa}{(\kappa + 1)^2},
\]
where the last equality uses $\kappa = \lambda_{\max}/\lambda_{\min}$. 
Taking square roots: $\cos(g, F^{-1}g) \ge 2\sqrt{\kappa}/(\kappa+1)$.
\end{proof}

\begin{remark}[Tightness]
\label{rem:kantorovich_tight}
The Kantorovich inequality is tight: equality in~\eqref{eq:kantorovich} holds 
when $h$ has equal-magnitude components along the eigenvectors of $F$ 
corresponding to $\lambda_{\min}$ and $\lambda_{\max}$ (and zero elsewhere). 
The bound $2\sqrt{\kappa}/(\kappa+1)$ is therefore the best possible 
worst-case alignment guarantee given only the condition number.
\end{remark}

\begin{remark}[Self-containedness]
\label{rem:self_containedness}
Proposition~\ref{prop:alignment_self_contained} extracts the core matrix inequality
needed for Section~7 from the Kantorovich inequality alone,
making the convergence analysis fully self-contained.
\end{remark}

\subsection*{Proof of the Kantorovich inequality}
\label{sec:kantorovich}
We include a short proof for readers unfamiliar with~\eqref{eq:kantorovich}. 
Let $A$ be SPD with eigenvalues in $[m,M]$ and eigendecomposition 
$A = Q\Lambda Q^\top$. Set $y = Q^\top x$ so that $\langle x,Ax\rangle 
= \sum_i \lambda_i y_i^2$ and $\langle x,A^{-1}x\rangle = \sum_i y_i^2/\lambda_i$.  
Define weights $w_i = y_i^2/\|y\|^2$ (a probability distribution). Then 
$\langle x,Ax\rangle \cdot \langle x,A^{-1}x\rangle / \|x\|^4 
= (\sum_i w_i \lambda_i)(\sum_i w_i/\lambda_i)$. 
By the AM-HM inequality applied to the distribution $\{w_i\}$ and the 
convexity of $t \mapsto 1/t$ on $(0,\infty)$, the product 
$(\sum w_i \lambda_i)(\sum w_i/\lambda_i)$ is maximized when $w$ 
concentrates on the extremes $\{m, M\}$. Setting $w_1 = t$, $w_2 = 1-t$ 
(with $\lambda_1 = m$, $\lambda_2 = M$), the product becomes 
$(tm + (1-t)M)(t/m + (1-t)/M)$, which is maximized at $t = M/(m+M)$, 
giving $(m+M)^2/(4mM)$.

\section{Why Isotropy Holds: A Direct Calculation}
\label{sec:isotropy_mechanism}

The alignment bound of Proposition~\ref{prop:alignment_self_contained} depends on 
$\kappa$; we now show via direct calculation why $\kappa$ is moderate 
for Gaussian Lie-algebraic policies on $\SO(3)^J$.

For the Gaussian policy~(1) with orthonormal 
Lie-algebra features $\{\Phi_k\}_{k=1}^{d_\g}$, the score 
function~(2) gives
\[
[\nabla_\theta \log \pi_\theta(a \mid s)]_k
= \frac{1}{\sigma^2}\ip{a - \mu_\theta(s)}{\Phi_k(s)}_F.
\]
The Fisher matrix~(3) becomes
\[
F_{kl}(\theta)
= \frac{1}{\sigma^4}
\E_{s \sim d^{\pi_\theta}}
\Big[
\E_{a \sim \pi_\theta(\cdot|s)}
\big[
\ip{a-\mu}{\Phi_k}_F \ip{a-\mu}{\Phi_l}_F
\big]
\Big].
\]
Since $\xi = a - \mu_\theta(s) \sim \mathcal{N}(0, \sigma^2 I_{d_\g})$ 
in the orthonormal basis $\{E_k\}$ (which makes 
$\R^{d_\g} \cong \g$ an isometry), the inner expectation evaluates by 
the isotropic Gaussian moment identity to
\[
\E_\xi\big[\ip{\xi}{\Phi_k}_F \ip{\xi}{\Phi_l}_F\big]
= \sigma^2 \ip{\Phi_k(s)}{\Phi_l(s)}_F.
\]
Therefore
\begin{equation}
\label{eq:fisher_explicit}
F_{kl}(\theta) = \frac{1}{\sigma^2}
\E_{s \sim d^{\pi_\theta}}\!\big[\ip{\Phi_k(s)}{\Phi_l(s)}_F\big].
\end{equation}

When the features $\Phi_k(s)$ satisfy approximate orthonormality under 
the state distribution---i.e., 
$\E_s[\ip{\Phi_k}{\Phi_l}_F] \approx \delta_{kl}$---the Fisher matrix 
is approximately $\sigma^{-2} I_{d_\g}$ and $\kappa \approx 1$.

For $\SO(3)^J$ with features constructed from the standard 
skew-symmetric basis $\{E_i^{(j)}\}$ (where $E_i^{(j)}$ acts on 
joint $j$ only), cross-joint terms vanish exactly: 
$\ip{E_i^{(j)}}{E_k^{(l)}}_F = 0$ for $j \neq l$. Within each joint, 
the three basis elements are orthonormal. Thus 
$F = \sigma^{-2}\, \mathrm{diag}(F^{(1)}, \ldots, F^{(J)})$, 
where each $3 \times 3$ block $F^{(j)}_{ik} 
= \E_s[\ip{\Phi_i^{(j)}(s)}{\Phi_k^{(j)}(s)}_F]$ is close to 
$I_3$ when the state distribution is not strongly axis-biased. 
The global condition number is 
$\kappa = \max_j \lambda_{\max}(F^{(j)}) \,/\, \min_j \lambda_{\min}(F^{(j)})$, 
bounded by the worst single-joint anisotropy $\max_j \kappa(F^{(j)})$.

This explains the empirical observation $\kappa \approx 2.5$: the 
block-diagonal structure prevents cross-joint coupling, and each 
$3 \times 3$ block has limited room for eigenvalue spread. 
For $\SE(3)$, the translation components break this structure---the 
abelian part has features with different magnitude scales---leading 
to the higher $\kappa \approx 2.8$ observed in 
Section~9.

\subsection{Formal proof of Proposition~3}

\begin{proof}[Proof of Proposition~3]
Index the parameters as $\theta = (\theta^{(1)},\ldots,\theta^{(J)})$ 
with $\theta^{(j)} \in \so(3)$ for joint $j$. Choose features 
$\Phi_k^{(j)}(s) = E_k^{(j)}$ (the $k$-th standard basis element 
of $\so(3)$ acting on joint $j$, zero elsewhere). These satisfy 
$\ip{E_k^{(j)}}{E_l^{(m)}}_F = \delta_{jm}\delta_{kl}$, where 
$\delta_{jm} = 0$ for $j \neq m$ since the matrices have disjoint 
nonzero blocks.

By equation~\eqref{eq:fisher_explicit}, the $(k^{(j)}, l^{(m)})$ 
entry of $F$ is
\[
F_{k^{(j)}, l^{(m)}} 
= \frac{1}{\sigma^2}\E_s\!\big[\ip{\Phi_k^{(j)}(s)}{\Phi_l^{(m)}(s)}_F\big]
= \frac{1}{\sigma^2}\E_s\!\big[\ip{E_k^{(j)}}{E_l^{(m)}}_F\big]
= \frac{\delta_{jm}}{\sigma^2}\E_s\!\big[\ip{E_k^{(j)}}{E_l^{(j)}}_F\big].
\]
For $j \neq m$ the entry is zero, confirming the block-diagonal 
structure. The $j$-th diagonal block is 
$F^{(j)}_{kl} = \sigma^{-2}\E_s[\ip{\Phi_k^{(j)}(s)}{\Phi_l^{(j)}(s)}_F]$.
The global condition number satisfies 
$\kappa(F) = \lambda_{\max}(F)/\lambda_{\min}(F) 
= \max_j \lambda_{\max}(F^{(j)}) \,/\, \min_j \lambda_{\min}(F^{(j)}) 
\le \max_j \kappa(F^{(j)})$,
with equality when all blocks share the same worst-case ratio.
\end{proof}

The full information-geometric treatment, including conditions under 
which state-averaged orthonormality holds and a perturbation analysis 
for feature anisotropy, is deferred to future work.

\section{Smoothness Analysis for Matrix Exponential Maps}
\label{sec:appendix_smoothness}

This appendix collects smoothness properties for the matrix exponential 
restricted to Lie algebras. Full derivations follow standard matrix 
analysis~\cite{higham2008functions,hall2015lie}; compact-algebra 
simplifications use the unitarity of $\exp(\theta)$ for skew-Hermitian 
$\theta$.

\subsection{Matrix Exponential Lipschitz Bounds}

\begin{lemma}[Lipschitz continuity of matrix exponential]
\label{lem:exp_lipschitz}
Let $\theta, \theta' \in \g$ with $\|\theta\|_F, \|\theta'\|_F \le R$. Then
$\|\exp(\theta) - \exp(\theta')\|_F 
\le \sqrt{n}\, e^R \|\theta - \theta'\|_F$.
For compact $\g \subseteq \uu(n)$, the exponential factor is eliminated:
$\|\exp(\theta) - \exp(\theta')\|_F \le \sqrt{n}\, \|\theta - \theta'\|_F$.
\end{lemma}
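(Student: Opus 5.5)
The plan is to use the Duhamel (variation-of-parameters) integral representation for the difference of two exponentials. This handles the non-commutativity of $\theta$ and $\theta'$ without any series manipulation. Define
\[
f(s) = \exp(s\theta)\exp((1-s)\theta'), \qquad s\in[0,1],
\]
so that $f(1) = \exp(\theta)$ and $f(0) = \exp(\theta')$. Differentiating with the product rule, and using that $\frac{d}{ds}\exp(s\theta) = \exp(s\theta)\theta$, gives
\[
f'(s) = \exp(s\theta)\,\theta\,\exp((1-s)\theta') - \exp(s\theta)\,\theta'\,\exp((1-s)\theta') = \exp(s\theta)(\theta-\theta')\exp((1-s)\theta').
\]
Hence $\exp(\theta)-\exp(\theta') = \int_0^1 \exp(s\theta)(\theta-\theta')\exp((1-s)\theta')\,ds$. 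This is the only structural step. It is valid for arbitrary matrices, so no Lie-algebraic property is needed here; in particular, the segment between $\theta$ and $\theta'$ plays no role.

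Next I take Frobenius norms inside the integral and use the mixed submultiplicativity $\|AXB\|_F \le \|A\|_{\mathrm{op}}\|X\|_F\|B\|_{\mathrm{op}}$. For the exponential factors I use $\|\exp(Y)\|_{\mathrm{op}} \le e^{\|Y\|_{\mathrm{op}}}$, which follows termwise from the power series, together with $\|Y\|_{\mathrm{op}} \le \|Y\|_F$. This gives $\|\exp(s\theta)\|_{\mathrm{op}} \le e^{sR}$ and $\|\exp((1-s)\theta')\|_{\mathrm{op}} \le e^{(1-s)R}$. The product is bounded by $e^{R}$ uniformly in $s$, so integrating yields $\|\exp(\theta)-\exp(\theta')\|_F \le e^R\|\theta-\theta'\|_F$. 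This is already at least as strong as the stated bound, since $\sqrt{n}\ge 1$. If one prefers to follow the paper's normalization, where the $\sqrt{n}$ arises from converting between operator and Frobenius norms (as in Lemma~\ref{lem:frechet_bounds}), the extra factor can simply be inserted; it does not affect the argument.

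For the compact case $\g\subseteq\uu(n)$, every element $s\theta$ and $(1-s)\theta'$ is skew-Hermitian. Therefore $\exp(s\theta)$ and $\exp((1-s)\theta')$ are unitary, with operator norm exactly $1$. The same integral estimate then gives $\|\exp(\theta)-\exp(\theta')\|_F \le \|\theta-\theta'\|_F \le \sqrt{n}\,\|\theta-\theta'\|_F$, with no dependence on $R$. This is the mechanism behind Proposition~\ref{prop:compact_advantage}.

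I do not expect a genuine obstacle. The only point requiring care is the non-commutativity: the naive estimate via $\exp(\theta)=\exp(\theta')\exp(\theta-\theta')$ is false in general, which is exactly why the Duhamel formula is used instead. A secondary point is ensuring that both endpoints satisfy the norm bound, so that the convex combination $s\|\theta\|+(1-s)\|\theta'\| \le R$ controls the exponent; this holds by hypothesis.
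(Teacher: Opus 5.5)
Your proof is correct, and it takes a genuinely different and more robust route than the paper.

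The paper integrates along the segment $X_s=\theta'+s(\theta-\theta')$ using the identity $\exp(\theta)-\exp(\theta')=\int_0^1\exp(X_s)(\theta-\theta')\,ds$. It then uses convexity of the Frobenius ball, and for $\g\subseteq\uu(n)$ the fact that the segment stays in $\uu(n)$, to bound $\|\exp(X_s)\|_2$.

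As written, that identity needs $\theta'$ and $\theta-\theta'$ to commute. The derivative of $s\mapsto\exp(X_s)$ is the Fr\'echet derivative $\int_0^1\exp((1-u)X_s)(\theta-\theta')\exp(uX_s)\,du$, not $\exp(X_s)(\theta-\theta')$. For instance, with $\theta'=E_{12}$ and $\theta-\theta'=\epsilon E_{21}$ in $\gl(2)$, the two sides already differ at order $\epsilon$. The paper's bound survives once this integral representation is substituted, because the integrand still has norm at most $e^{\|X_s\|_F}\|\theta-\theta'\|_F$.

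Your product formula $f(s)=\exp(s\theta)\exp((1-s)\theta')$ avoids the issue entirely. It is exact for non-commuting matrices and uses only the endpoint bounds, never the intermediate points, so convexity of the ball is not needed.

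Your norm bookkeeping is also sharper. Keeping $\theta-\theta'$ in Frobenius norm and both exponentials in operator norm gives $e^R\|\theta-\theta'\|_F$, and $\|\theta-\theta'\|_F$ in the compact case. The paper's $\sqrt n$ comes only from its lossy chain $\|AB\|_F\le\|A\|_F\|B\|_2$, then $\|A\|_F\le\sqrt n\|A\|_2$ and $\|B\|_2\le\|B\|_F$. So you prove a stronger inequality, and the stated one follows because $\sqrt n\ge 1$.
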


\begin{proof}
We use the integral identity~\cite[Eq.~(10.8)]{higham2008functions}:
\[
\exp(\theta) - \exp(\theta')
= \int_0^1 \exp\!\big(\theta' + s(\theta - \theta')\big)\,
  (\theta - \theta')\, ds.
\]
Taking Frobenius norms and applying the submultiplicative inequality 
$\|AB\|_F \le \|A\|_F \|B\|_2$:
\begin{align*}
\|\exp(\theta) - \exp(\theta')\|_F
&\le \int_0^1
  \big\|\exp\!\big(\theta' + s(\theta - \theta')\big)\big\|_F \,
  \|\theta - \theta'\|_2\, ds \\
&\le \int_0^1
  \sqrt{n}\, \big\|\exp\!\big(\theta' + s(\theta - \theta')\big)\big\|_2 \,
  \|\theta - \theta'\|_F\, ds,
\end{align*}
where the second line uses $\|A\|_F \le \sqrt{n}\,\|A\|_2$ and 
$\|B\|_2 \le \|B\|_F$. Since 
$\|\exp(X)\|_2 \le e^{\|X\|_2} \le e^{\|X\|_F}$ and 
$\|\theta' + s(\theta - \theta')\|_F \le (1{-}s)\|\theta'\|_F 
+ s\|\theta\|_F \le R$ (by convexity of the ball), the integrand is 
bounded by $\sqrt{n}\, e^R$, giving
\[
\|\exp(\theta) - \exp(\theta')\|_F 
\le \sqrt{n}\, e^R \|\theta - \theta'\|_F.
\]

For compact $\g \subseteq \uu(n)$: $\theta' + s(\theta - \theta') 
\in \uu(n)$ since $\uu(n)$ is a linear subspace, so 
$\exp(\theta' + s(\theta - \theta'))$ is unitary and 
$\|\exp(\cdot)\|_2 = 1$. The bound becomes 
$\sqrt{n}\, \|\theta - \theta'\|_F$.
\end{proof}

\subsection{Fr\'{e}chet Derivative Bounds}

\begin{lemma}[Fr\'{e}chet derivative bounds]
\label{lem:frechet_bounds}
For $\theta, \theta' \in \g$ with $\|\theta\|_F, \|\theta'\|_F \le R$:
(i)~$\|D_\theta[H]\|_F \le \sqrt{n}\, e^R \|H\|_F$;
(ii)~$\|D_\theta - D_{\theta'}\|_{\mathrm{op}} \le \sqrt{n}\, e^{R} 
\|\theta - \theta'\|_F$.
For compact $\g$: replace $e^R$ by $1$ in both (i) and (ii).
The overall smoothness growth $\Theta(e^{2R})$ arises when two 
such factors compose through the policy gradient chain rule.
\end{lemma}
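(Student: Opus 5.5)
The plan is to work from the integral representation of the Fr\'echet derivative of the matrix exponential \cite{higham2008functions},
\[
D_\theta[H] = \int_0^1 \exp(s\theta)\, H\, \exp\big((1-s)\theta\big)\, ds ,
\]
and to bound everything with the mixed submultiplicative inequality $\|AHB\|_F \le \|A\|_2\,\|H\|_F\,\|B\|_2$. This sidesteps the noncommutativity of $\theta$ and $H$, which I expect to be the only real obstacle. A Taylor-series argument would instead produce commutator terms that are awkward to control uniformly.

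For (i), I will use $\|\exp(X)\|_2 \le e^{\|X\|_2} \le e^{\|X\|_F}$ on each factor. With $\|s\theta\|_F \le sR$ and $\|(1-s)\theta\|_F \le (1-s)R$, the integrand is bounded by $e^{sR}e^{(1-s)R}\|H\|_F = e^R\|H\|_F$, so $\|D_\theta[H]\|_F \le e^R\|H\|_F$. This is already at most $\sqrt{n}\,e^R\|H\|_F$, so the stated factor $\sqrt{n}$ is slack. In the compact case $\theta \in \uu(n)$, both $\exp(s\theta)$ and $\exp((1-s)\theta)$ are unitary, so their operator norms equal $1$ and $e^R$ is replaced by $1$.

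For (ii), I will split the integrand difference telescopically:
\[
e^{s\theta}He^{(1-s)\theta} - e^{s\theta'}He^{(1-s)\theta'}
= \big(e^{s\theta}-e^{s\theta'}\big)He^{(1-s)\theta} + e^{s\theta'}H\big(e^{(1-s)\theta}-e^{(1-s)\theta'}\big).
\]
Each exponential difference is controlled by an operator-norm version of Lemma~\ref{lem:exp_lipschitz}. Applying the same integral identity to $s\theta$ and $s\theta'$, which lie in the ball of radius $sR$ by convexity, gives $\|e^{s\theta}-e^{s\theta'}\|_2 \le e^{sR}\,s\,\|\theta-\theta'\|_2$. Multiplying by $\|e^{(1-s)\theta}\|_2 \le e^{(1-s)R}$ bounds the first term by $s\,e^{R}\|H\|_F\|\theta-\theta'\|_F$, using $\|\cdot\|_2\le\|\cdot\|_F$. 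Symmetrically, the second term is bounded by $(1-s)\,e^R\|H\|_F\|\theta-\theta'\|_F$. The two bounds sum to $e^R\|H\|_F\|\theta-\theta'\|_F$ independently of $s$, and integrating over $s\in[0,1]$ and taking the supremum over $\|H\|_F=1$ gives $\|D_\theta-D_{\theta'}\|_{\mathrm{op}} \le e^R\|\theta-\theta'\|_F \le \sqrt{n}\,e^R\|\theta-\theta'\|_F$.

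In the compact case every segment point $\theta'+u(\theta-\theta')$ stays in the linear subspace $\uu(n)$. Hence all exponentials appearing, both in the Lipschitz estimate and as outer factors, are unitary, and every $e^{(\cdot)R}$ collapses to $1$, exactly as in the compact part of Lemma~\ref{lem:exp_lipschitz}. The remark that $\Theta(e^{2R})$ arises from composing two such factors is not part of what needs proving here; it is interpretive and deferred to Lemma~\ref{lem:rl_smoothness} and Proposition~\ref{prop:lower_bound}.
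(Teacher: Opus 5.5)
Your proof is correct and is essentially the paper's argument. Both use the same integral representation of $D_\theta$, the same add-and-subtract split of the integrand for (ii), an exponential Lipschitz estimate on the scaled arguments $s\theta,s\theta'$, and unitarity for compact $\g$. The one real difference is that you keep both outer exponential factors in operator norm via $\|AHB\|_F\le\|A\|_2\|H\|_F\|B\|_2$. This gives the sharper constant $e^R$, and shows that the $\sqrt{n}$ the paper picks up from its Frobenius-to-spectral conversion is unnecessary, contrary to its remark that $\sqrt{n}e^R$ is sharp. Justify $\|e^{s\theta}-e^{s\theta'}\|_2\le s\,e^{sR}\|\theta-\theta'\|_2$ through $e^{A}-e^{B}=\int_0^1 D_{B+u(A-B)}[A-B]\,du$ and the operator-norm analogue of your part (i). Do not use the identity $\exp(\theta)-\exp(\theta')=\int_0^1\exp(\theta'+u(\theta-\theta'))(\theta-\theta')\,du$ from the proof of Lemma~\ref{lem:exp_lipschitz}, which in general requires $\theta$ and $\theta'$ to commute.
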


\begin{proof}
\textbf{Part (i).}
The Fr\'{e}chet derivative of the matrix exponential at $\theta$ in 
direction $H$ admits the integral 
representation~\cite[Theorem~10.13]{higham2008functions}:
\[
D_\theta[H]
= \int_0^1 \exp\!\big((1{-}t)\theta\big)\, H\, \exp(t\theta)\, dt.
\]
Taking Frobenius norms and using submultiplicativity 
$\|ABC\|_F \le \|A\|_F \|B\|_F \|C\|_2$ together with
$\|M\|_F \le \sqrt{n}\,\|M\|_2$ for $n\!\times\!n$ matrices and
$\|\exp(X)\|_2 \le e^{\|X\|_F}$:
\begin{align*}
\|D_\theta[H]\|_F
&\le \int_0^1
  \|\exp((1{-}t)\theta)\|_F\, \|H\|_F\, \|\exp(t\theta)\|_2\, dt \\
&\le \int_0^1 \sqrt{n}\,e^{(1-t)\|\theta\|_F}\, e^{t\|\theta\|_F}\, dt
  \cdot \|H\|_F
= \sqrt{n}\,e^{\|\theta\|_F} \|H\|_F
\le \sqrt{n}\,e^R \|H\|_F.
\end{align*}
The $\sqrt{n}$ factor enters via the Frobenius-to-spectral conversion
$\|\exp((1{-}t)\theta)\|_F \le \sqrt{n}\,\|\exp((1{-}t)\theta)\|_2$,
which is necessary when the adjacent factor $\exp(t\theta)$ is bounded
in spectral norm. This derivation is self-consistent: the
$\sqrt{n}e^R$ constant used throughout the main paper is the
sharp bound from this calculation.

\textbf{Part (ii).}
Write $(D_\theta - D_{\theta'})[H]
= \int_0^1 \big[\exp((1{-}t)\theta)\, H\, \exp(t\theta)
- \exp((1{-}t)\theta')\, H\, \exp(t\theta')\big]\, dt$.
Add and subtract $\exp((1{-}t)\theta)\, H\, \exp(t\theta')$:
\begin{align*}
(D_\theta - D_{\theta'})[H]
&= \int_0^1 \exp((1{-}t)\theta)\, H\,
   \big[\exp(t\theta) - \exp(t\theta')\big]\, dt \\
&\quad + \int_0^1
   \big[\exp((1{-}t)\theta) - \exp((1{-}t)\theta')\big]\, H\,
   \exp(t\theta')\, dt.
\end{align*}
\emph{First integral.}
$\|\exp((1{-}t)\theta)\|_2 \le e^{(1-t)R}$. By 
Lemma~\ref{lem:exp_lipschitz} applied to $t\theta$ and $t\theta'$ (which 
lie in the ball of radius $tR$):
$\|\exp(t\theta) - \exp(t\theta')\|_F 
\le \sqrt{n}\, e^{tR}\, t\|\theta - \theta'\|_F$.
Combining via $\|ABC\|_F \le \|A\|_2 \|B\|_F \|C\|_F$:
\[
\text{First integral} 
\le \int_0^1 e^{(1-t)R} \|H\|_F \cdot \sqrt{n}\, e^{tR}\, 
t\|\theta - \theta'\|_F\, dt
= \frac{\sqrt{n}\, e^R}{2}\, \|\theta - \theta'\|_F \|H\|_F.
\]
\emph{Second integral.}
By the same argument with the roles of the two exponential factors 
reversed:
\[
\text{Second integral} 
\le \frac{\sqrt{n}\, e^R}{2}\, \|\theta - \theta'\|_F \|H\|_F.
\]
Combining the two integrals:
\[
\|(D_\theta - D_{\theta'})[H]\|_F 
\le \sqrt{n}\, e^R \|\theta - \theta'\|_F \|H\|_F.
\]
Thus the operator norm satisfies
$\|D_\theta - D_{\theta'}\|_{\mathrm{op}} 
\le \sqrt{n}\, e^{R}\, \|\theta - \theta'\|_F$.
When composed through the policy gradient chain rule (Step~3 of 
Lemma~3.2 of the main paper), the product of two such factors yields 
overall smoothness growth of order $e^{2R}$, which matches the 
lower bound of Proposition~\ref{prop:lower_bound}.

For compact $\g \subseteq \uu(n)$: all exponentials are unitary 
($\|\exp(\cdot)\|_2 = 1$), so the exponential factors vanish. 
The bound becomes 
$\|D_\theta - D_{\theta'}\|_{\mathrm{op}} 
\le \sqrt{n}\, \|\theta - \theta'\|_F$.
\end{proof}

\subsection{Explicit Lipschitz Constant for Policy Objectives}

\begin{proposition}[Explicit Lipschitz constant]
\label{prop:explicit_lipschitz}
Under Gaussian policies~(1) with feature bound 
$B_\Phi$, action bound $B_a$, exploration $\sigma$, parameter radius $R_0$, 
bounded rewards $R_{\max}$, discount $\gamma$, and concentrability $C_d$, 
the gradient Lipschitz constant is
\[
L = \frac{4 R_{\max} B_\Phi B_a C_d}{(1-\gamma)^3 \sigma^2} \cdot L_{\exp}(R_0),
\quad
L_{\exp}(R_0) = 
\begin{cases}
\mathcal{O}(n\, e^{2R_0}) & \text{general } \g, \\
\mathcal{O}(\sqrt{n}) & \text{compact } \g.
\end{cases}
\]
The constants suppress numerical prefactors from the Agarwal--Kakade--Li 
Hessian bound~\cite{agarwal2021theory} and the product rule for 
Fr\'{e}chet derivatives; the $e^{2R_0}$ growth and its absence for 
compact algebras are the qualitatively relevant features 
(Theorem~6.1).
\end{proposition}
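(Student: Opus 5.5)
The proof will assemble the three sources identified in the proof of Lemma~3.2 (the Gaussian score bound, the Agarwal--Kakade--Li Hessian bound, and the Fr\'{e}chet-derivative factor) and then track the exponential constants explicitly through Lemma~\ref{lem:exp_lipschitz} and Lemma~\ref{lem:frechet_bounds}.

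\emph{Step 1: RL prefactor.} With score bound $B_{\mathrm{score}} = B_a B_\Phi/\sigma^2$ (Step~1 of the proof of Lemma~3.2), rewards bounded by $R_{\max}$, and concentrability $C_d$ (Assumption~8), I invoke \cite[Lem.~5]{agarwal2021theory}. This bounds the Hessian of $J$ with respect to the mean action by a quantity of order $R_{\max} B_\Phi B_a C_d/[(1-\gamma)^3\sigma^2]$. The factor $4$ collects the score-outer-product, score-Hessian, and distribution-shift terms.

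\emph{Step 2: chain rule through $\exp$.} I write $J$ as a composition $J = \tilde J \circ \exp$, where $\tilde J$ carries the RL structure and $\exp$ enters through the transitions $s_{t+1} = s_t\exp(a_t)$. The second-order chain rule then gives
$\nabla^2 J[H,K] = \nabla^2\tilde J[D_\theta H, D_\theta K] + \nabla\tilde J[D^2_\theta(H,K)]$.
\begin{itemize}
\item For the first term, I apply Lemma~\ref{lem:frechet_bounds}(i) twice. Each application contributes a factor $\sqrt{n}\,e^{R_0}$, so together they give $n e^{2R_0}$.
\item For the second term, I use Lemma~\ref{lem:frechet_bounds}(ii) as a bound on $D^2$. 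It contributes $\sqrt{n}\,e^{R_0}$, which is dominated by the first term.
\end{itemize}
Combining the two terms gives $L_{\exp}(R_0) = \mathcal{O}(n e^{2R_0})$ for general $\g$.

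\emph{Step 3: compact case.} For $\g\subseteq\uu(n)$, every exponential factor appearing in the integral representations is unitary, by the compact clauses of Lemma~\ref{lem:exp_lipschitz} and Lemma~\ref{lem:frechet_bounds}. So $e^{R_0}$ is replaced by $1$ throughout. I will then sharpen the constant: the Frobenius-to-spectral conversion is needed only once, since the norm $\|\exp(\cdot)\|_F=\sqrt n$ is exact while the remaining factors are bounded in spectral norm by $1$. This yields $\mathcal{O}(\sqrt n)$.

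\emph{Main obstacle.} Step~2 is the delicate part. Agarwal et al.\ state their bound in terms of the policy parameters, not through the dynamics. Making rigorous that the dependence on $\theta$ passes through $\exp$ only boundedly often per trajectory requires care, because the discounted sum over time could compound the Fr\'{e}chet factors. I would first try to absorb each per-step composition into the discounted geometric series, using boundedness of the iterates (Assumption~3). Any further polynomial-in-$n$ bookkeeping I will leave inside the $\mathcal{O}$.
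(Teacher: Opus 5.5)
Your outline is the paper's (score bound, the Agarwal--Kakade--Li Hessian bound, Fr\'echet factors from Lemma~\ref{lem:frechet_bounds}, unitarity for compact $\g$). Your second-order chain rule is cleaner bookkeeping than the paper's product of the (i)-factor with the (ii)-Lipschitz factor. The gap is the composition $J=\tilde J\circ\exp$ itself: the obstacle you flag is real, and it cannot be fixed along this line.

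In this policy class $\theta$ enters $J$ only through the Gaussian density, whose mean $\mu_\theta(s)=\sum_k\theta_k\Phi_k(s)$ is linear in $\theta$. The exponential acts on the sampled action $a_t$ in $s_{t+1}=s_t\exp(a_t)$, and that kernel does not depend on $\theta$. So the Fr\'echet derivative of $\exp$ at $\theta$, which is what Lemma~\ref{lem:frechet_bounds} bounds on $\|\theta\|_F\le R_0$, never appears when you differentiate $J$. Because $\theta\mapsto\mu_\theta$ is linear, your Step~1 likelihood-ratio bound is already a bound on the full $\theta$-Hessian. Multiplying it by $n e^{2R_0}$ is slack, not a chain-rule factor.

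A pathwise derivative through the dynamics has its own problems:
it would need $r$ and $P$ differentiable in the state, but only $|r|\le R_{\max}$ is assumed;
it would evaluate Fr\'echet derivatives at $a_t$ rather than at $\theta$;
and it would multiply them along the trajectory.
(A3) bounds $\theta$, not these products, and $\gamma$ times a per-step growth factor need not be below $1$, so your geometric-series fix fails. The paper's proof only asserts the composition ``$\theta\mapsto\exp(\theta)\mapsto J(\theta)$'', so it does not close this either.

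The paper's own witness (Proposition~\ref{prop:lower_bound}) shows where $e^{2R}$ really comes from. With $\gamma=0$, it comes from the size of the reward on reachable actions, $\tfrac12\|\exp(a)-I\|_F^2\lesssim n\,e^{2(R_0+B_a)}$, inserted into an $R_0$-free likelihood-ratio bound. With $R_{\max}$ fixed as in the statement, that bound has no $R_0$-dependence at all. Any genuine $R_0$-dependence must therefore enter through $R_0$-dependent constants of this kind, not through $D_\theta$.

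There are two smaller problems. First, in Step~1 an Agarwal-type lemma gives a score-outer-product term proportional to $B_{\mathrm{score}}^2=B_a^2B_\Phi^2/\sigma^4$, plus a score-Hessian term with $B_\Phi^2/\sigma^2$. It cannot yield the linear prefactor $R_{\max}B_\Phi B_aC_d/[(1-\gamma)^3\sigma^2]$, which is off by $B_aB_\Phi/\sigma^2$, and that is not a numerical constant. The paper writes the quadratic bound and then asserts the linear one ``after simplification''; this is the same slip.

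Second, your Step~3 sharpening contradicts your Step~2. The term $\nabla^2\tilde J[D_\theta H,D_\theta K]$ contains two Fr\'echet integrals, and the compact clause of Lemma~\ref{lem:frechet_bounds}(i) charges $\sqrt n$ to each. Nothing lets you pay only once, so your own argument gives $n$, not $\sqrt n$. The repair is to bound each integral with $\|ABC\|_F\le\|A\|_2\|B\|_F\|C\|_2$, and to use $e^X-e^Y=\int_0^1e^{sX}(X-Y)e^{(1-s)Y}\,ds$; the identity quoted in Lemma~\ref{lem:exp_lipschitz} requires $X,Y$ to commute. This gives $\|D_\theta[H]\|_F\le e^{\|\theta\|_2}\|H\|_F$ in general. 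On $\uu(n)$ it gives $\|D_\theta[H]\|_F\le\|H\|_F$ and $\|D_\theta-D_{\theta'}\|_{\mathrm{op}}\le\|\theta-\theta'\|_F$. So the $\sqrt n$ is an artifact, and the compact factor is $\mathcal{O}(1)\subseteq\mathcal{O}(\sqrt n)$. That repairs only the bookkeeping, not the missing composition above.
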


\begin{proof}
We instantiate~\cite[Lemma~5]{agarwal2021theory}, which bounds the 
policy Hessian for parameterized MDPs with bounded score functions.

\textbf{Step 1: Score and Hessian bounds.}
For Gaussian policies~(1) with bounded 
actions (Assumption~7), the score satisfies
\[
\|\nabla_\theta \log \pi_\theta(a \mid s)\|
= \frac{1}{\sigma^2}\|a - \mu_\theta(s)\|_F \|\Phi(s)\|_F
\le \frac{B_a B_\Phi}{\sigma^2} =: B_{\mathrm{score}}.
\]
The score Hessian satisfies 
$\|\nabla_\theta^2 \log \pi_\theta(a \mid s)\|_{\mathrm{op}} 
\le B_\Phi^2/\sigma^2$, since the Gaussian log-likelihood is quadratic 
in $\theta$ with curvature controlled by the feature Gram matrix.

\textbf{Step 2: RL Hessian bound via~\cite[Lemma~5]{agarwal2021theory}.}
The cited result, applied with the following parameter correspondence,
\begin{center}
\begin{tabular}{ll}
\emph{Their parameter} & \emph{Our value} \\
\hline
Score bound $B$ & $B_{\mathrm{score}} = B_a B_\Phi/\sigma^2$ \\
Reward bound $R$ & $R_{\max}$ \\
Discount $\gamma$ & $\gamma$ \\
Concentrability $C$ & $C_d$ (Assumption~8)
\end{tabular}
\end{center}
yields the policy Hessian bound
\[
\|\nabla^2 J(\theta)\|_{\mathrm{op}}
\le \frac{2 R_{\max} B_{\mathrm{score}}^2 C_d}{(1-\gamma)^3}
+ \frac{R_{\max}}{(1-\gamma)^2} \cdot \frac{B_\Phi^2}{\sigma^2}.
\]
The first term (dominant for typical $B_a/\sigma \ge 1$) gives the 
$4 R_{\max} B_\Phi B_a C_d / [(1-\gamma)^3 \sigma^2]$ prefactor after 
simplification.

\textbf{Step 3: Exponential map factor.}
The state transition $s_{t+1} = s_t \exp(a_t)$ introduces the 
matrix exponential into the reward-to-parameter chain rule. 
Differentiating $J$ with respect to $\theta$ produces factors of the 
Fr\'{e}chet derivative $D_\theta[\cdot]$. 
By Lemma~\ref{lem:frechet_bounds}(i), each factor satisfies 
$\|D_\theta[H]\|_F \le \sqrt{n}\, e^{R_0} \|H\|_F$, and 
by Lemma~\ref{lem:frechet_bounds}(ii), the Lipschitz constant of the 
Fr\'{e}chet derivative is bounded by $\sqrt{n}\, e^{R_0}$. 
The product rule applied to the composition 
$\theta \mapsto \exp(\theta) \mapsto J(\theta)$ involves products of 
these factors: the first-order bound contributes $\sqrt{n}\, e^{R_0}$ 
and the Lipschitz bound contributes another $\sqrt{n}\, e^{R_0}$. 
Together with prefactors from~\cite[Lemma~5]{agarwal2021theory}, this 
yields $L_{\exp}(R_0) = \mathcal{O}(n\, e^{2R_0})$ for general $\g$. 
The $e^{2R_0}$ growth is an intrinsic consequence of 
composing two $e^{R_0}$-bounded operations, matching the 
$\Omega(e^{2R})$ lower bound (Proposition~\ref{prop:lower_bound}).

For compact $\g \subseteq \uu(n)$: $\|\exp(\theta)\|_2 = 1$ 
eliminates all exponential factors (Proposition~\ref{prop:compact_advantage}), 
giving $L_{\exp} = 2 + \sqrt{n}$.
\end{proof}

\subsection{Compact vs.\ Non-Compact Dichotomy}

\begin{proposition}[Geometric advantage of compact Lie algebras]
\label{prop:compact_advantage}
For compact $\g \subseteq \uu(n)$: 
\begin{enumerate}[label=(\roman*), leftmargin=2em, nosep]
\item $\|\exp(\theta)\|_2 = 1$;
\item $L$ is independent of $R_0$;
\item $\|\nabla J(\theta)\|_F \le 2R_{\max}B_\Phi/(1{-}\gamma)^2$ uniformly.
\end{enumerate}
\end{proposition}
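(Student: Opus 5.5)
The plan is to prove the three items in the order (i), (iii), (ii), since (ii) is a corollary of (i) combined with Lemma~\ref{lem:frechet_bounds} and Proposition~\ref{prop:explicit_lipschitz}. For (i), I would use that every $\theta \in \g \subseteq \uu(n)$ satisfies $\theta^* = -\theta$. Hence $\exp(\theta)^*\exp(\theta) = \exp(-\theta)\exp(\theta) = I$, where the two factors commute. So $\exp(\theta)$ is unitary and all its singular values equal $1$, giving $\|\exp(\theta)\|_2 = 1$. The same conclusion applies to $\exp(t\theta)$, $\exp((1-t)\theta)$, and to convex combinations $\theta' + s(\theta-\theta')$, because $\uu(n)$ is a linear subspace. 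This is exactly the input used in the compact clauses of Lemmas~\ref{lem:exp_lipschitz} and~\ref{lem:frechet_bounds}.

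For (iii), I would start from the policy gradient theorem~\eqref{eq:policy_gradient}. Because the Gaussian score has zero mean, I would subtract the baseline $V^{\pi_\theta}(s)$ and write
\[
\nabla J(\theta) = \E_{s,a}\big[A^{\pi_\theta}(s,a)\,\nabla_\theta\log\pi_\theta(a\mid s)\big].
\]
The route to the stated constant is not via the crude score bound $B_aB_\Phi/\sigma^2$, since that bound would bring in $\sigma$ and $B_a$. Instead I would bound $|Q^{\pi_\theta}| \le R_{\max}/(1-\gamma)$. I would then bound the expectation of the score against a bounded function by a total-variation / Pinsker argument, as in Step~1 of Lemma~\ref{lem:rademacher_value}. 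This pairs a $1/(1-\gamma)$ from the visitation normalization with $B_\Phi$ from $\|\partial_{\theta_k}\mu_\theta\|_F = \|\Phi_k\|_F$. A factor $2$ comes from the TV bound $\|Q\|_\infty \cdot 2\,\mathrm{TV}$. Where the dynamics pass through $s_{t+1}=s_t\exp(a_t)$, every exponential factor has unit spectral norm by (i), so no $R_0$-dependence enters. The result is a bound of the form $2R_{\max}B_\Phi/(1-\gamma)^2$ that is uniform in $\theta$.

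For (ii), I would return to the proof of Proposition~\ref{prop:explicit_lipschitz}. In Step~3, I would replace each occurrence of $e^{R_0}$ coming from Lemma~\ref{lem:frechet_bounds}(i)--(ii) by $1$, using (i). The RL prefactor $4R_{\max}B_\Phi B_aC_d/[(1-\gamma)^3\sigma^2]$ from Step~2 does not depend on $R_0$. Neither does $C_d$, since compactness bounds it through mixing (Assumption~\ref{ass:concentrability}). Their product is therefore an $R_0$-free constant times $L_{\exp}=\mathcal{O}(\sqrt n)$.

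The main obstacle is (iii). The policy gradient theorem naively produces a $\sigma^{-1}$ or $\sigma^{-2}$ factor, and the stated bound contains none. I would first try to justify the $\sigma$-free form by a reparameterization-style argument: write $a = \mu_\theta(s) + \xi$ and differentiate through the mean rather than through the density. If that fails, I would accept a bound carrying $B_aB_\Phi/\sigma^2$. That weaker bound still establishes the qualitative claim that the gradient is uniformly bounded independently of $R_0$.
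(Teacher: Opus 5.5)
Your parts (i) and (ii) match the paper's proof essentially verbatim. For (i) you obtain unitarity from $\exp(\theta)^*\exp(\theta)=\exp(-\theta)\exp(\theta)=I$. For (ii) you replace every $e^{R_0}$ in the Fr\'echet-derivative bounds by $1$ inside the constant of Proposition~\ref{prop:explicit_lipschitz}.

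The gap is in (iii), and your primary route cannot give the displayed constant. In Step~1 of Lemma~\ref{lem:rademacher_value}, Pinsker gives $\mathrm{TV}(\pi_\theta(\cdot\mid s),\pi_{\theta'}(\cdot\mid s))\le B_\Phi\norm{\theta-\theta'}_F/(2\sigma)$. The simulation-lemma argument therefore yields only $\norm{\nabla J(\theta)}_F\le R_{\max}B_\Phi/(\sigma(1-\gamma)^2)$. Your bookkeeping (a $2$ from $\|Q\|_\infty\cdot 2\,\mathrm{TV}$, and $B_\Phi$ from $\partial_{\theta_k}\mu_\theta$) silently drops the $1/(2\sigma)$. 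That factor is intrinsic: the exact TV between $\mathcal{N}(\mu,\sigma^2 I)$ and $\mathcal{N}(\mu',\sigma^2 I)$ behaves like $\norm{\mu-\mu'}/(\sqrt{2\pi}\,\sigma)$ for nearby means.

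The reparameterization route needs derivatives of $r$ and of the transition kernel in $a$. These are not assumed, and the resulting constant would be in terms of them rather than $R_{\max}$.

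In fact no argument can reach the $\sigma$-free constant. Take a single-state bandit on $\g=\so(2)$ with $\Phi_1=E_1$ (so $B_\Phi=1$) and $r(a)=R_{\max}\sin(\ip{a}{E_1}_F/\sigma)$. Then $J(\theta)=\frac{R_{\max}e^{-1/2}}{1-\gamma}\sin(\theta/\sigma)$, so $|J'(0)|=\frac{R_{\max}e^{-1/2}}{(1-\gamma)\sigma}$. This exceeds $2R_{\max}/(1-\gamma)^2$ whenever $\sigma<e^{-1/2}(1-\gamma)/2$; for $\gamma=0$ and $\sigma=0.1$ it is about $6R_{\max}$. Truncating at $B_a=3\sigma$ only changes $e^{-1/2}$ to roughly $0.61$. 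So the obstacle you flagged is not a technicality: the displayed bound is false under the stated hypotheses.

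The paper's own proof of (iii) is exactly your fallback. It bounds $|Q^{\pi_\theta}|\le R_{\max}/(1-\gamma)$ and the score by $B_\Phi B_a/\sigma^2$, obtaining $R_{\max}B_\Phi B_a/((1-\gamma)\sigma^2)$. That also does not match the displayed constant. It should also carry one more factor $(1-\gamma)^{-1}$, because \eqref{eq:policy_gradient} omits the $1/(1-\gamma)$ that accompanies the normalized $d^{\pi_\theta}$.

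This bound uses neither compactness nor part (i). In the likelihood-ratio form nothing is differentiated through $s_{t+1}=s_t\exp(a_t)$, so your remark about unit-norm exponential factors plays no role. The same uniform bound holds for every $\g$ under (A7).

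So commit to the fallback and present it as a corrected version of (iii): $\norm{\nabla J(\theta)}_F\le R_{\max}B_\Phi B_a/((1-\gamma)^2\sigma^2)$ uniformly in $\theta$. Do not present it as a weaker route to the stated constant. This is what is actually provable here, and it already gives the $R_0$-independence you want.
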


\begin{proof}
\textbf{Part (i).} For $\theta \in \uu(n)$, we have $\theta^* = -\theta$ 
(skew-Hermitian). Then 
$\exp(\theta)^*\exp(\theta) = \exp(\theta^*)\exp(\theta) 
= \exp(-\theta)\exp(\theta) = I$, 
so $\exp(\theta)$ is unitary and $\|\exp(\theta)\|_2 = 1$.

\textbf{Part (ii).} From Lemma~\ref{lem:frechet_bounds}, the smoothness 
constant is 
$L = \frac{4R_{\max}B_\Phi B_a C_d}{(1-\gamma)^3\sigma^2}\cdot L_{\exp}(R_0)$. 
For compact $\g$, Part~(i) gives $\|\exp(\theta)\|_2 = 1$ for all 
$\theta \in \g$, so the exponential factor in all Fr\'{e}chet derivative 
bounds (Lemma~\ref{lem:frechet_bounds}) reduces to $1$, yielding 
$L_{\exp}(R_0) = \mathcal{O}(\sqrt{n})$ independent of $R_0$.

\textbf{Part (iii).} By the policy gradient theorem,
\[
\|\nabla J(\theta)\|_F 
\le \E_{s,a}\!\big[|Q^{\pi_\theta}(s,a)|\,\|\nabla_\theta\log\pi_\theta(a\mid s)\|_F\big]
\le \frac{R_{\max}}{1-\gamma} \cdot \frac{B_\Phi B_a}{\sigma^2},
\]
bounding $|Q^{\pi_\theta}| \le R_{\max}/(1-\gamma)$ 
and $\|\nabla_\theta\log\pi_\theta\|_F \le B_\Phi B_a/\sigma^2$
(Assumption~7).
Since Part~(i) ensures $\|\exp(\theta)\|_2=1$ 
regardless of $\|\theta\|_F$, this bound is uniform over all $\theta\in\g$.
\end{proof}

\begin{proposition}[Lower bound for non-compact algebras]
\label{prop:lower_bound}
For non-compact $\g$ containing a hyperbolic element 
(Definition~1), there exists a Lie 
Group MDP (Definition~2) whose policy objective 
$J(\theta)$ has gradient Lipschitz constant $L_{\nabla J}(R) = \Omega(e^{2R})$ over 
$\{\theta \in \g : \|\theta\|_F \le R\}$.
\end{proposition}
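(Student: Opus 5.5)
The plan is to exhibit an explicit witness MDP, following the sketch in the proof of Theorem~\ref{thm:dichotomy}. Given a hyperbolic element of $\g$, normalize it to a unit-Frobenius-norm $H\in\g$ with a real eigenvalue $\lambda:=\lambda_{\max}(H)>0$. Take the trivial homogeneous space $M=G$ with a single (absorbing) state, so that $d^{\pi_\theta}$ is a point mass and Assumption~\ref{ass:concentrability} holds with $C_d=1$. Use one feature, $\Phi_1\equiv H$, so the mean action is $\mu_\theta=\theta H$ with $\theta\in\R$ and $\|\theta H\|_F=|\theta|$. With the reward $r(a)=-\tfrac12\|\exp(a)-I\|_F^2$ (truncated/clipped to meet $|r|\le R_{\max}$ on the relevant region), the objective becomes
\[
J(\theta)=\tfrac{1}{1-\gamma}\,\E_{\xi}\big[r(\theta H+\xi)\big],
\]
a Gaussian smoothing of $g(t)=r(tH)$ along the line $\R H\subset\g$. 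Its gradient Lipschitz constant on the ball of radius $R$ is therefore at least $\sup_{|t|\le R}|J''(t)|$.

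First I would handle the $\gl(n)$ case, $H=\mathrm{diag}(1,0,\dots,0)$. There $\exp(tH)=\mathrm{diag}(e^t,1,\dots,1)$, so $g(t)=-\tfrac12(e^t-1)^2$ and
\[
g''(t)=-(2e^{2t}-e^t),\qquad |g''(R)|\ge e^{2R}\ \text{for } R\ge 0.
\]
For a general hyperbolic $H$, I would put $H$ in real Jordan form, or more simply lower-bound $\|\exp(tH)-I\|_F$ along a real eigenvector $v$ with eigenvalue $\lambda$:
\[
\|\exp(tH)-I\|_F\ \ge\ \|(\exp(tH)-I)v\|_2/\|v\|_2=e^{\lambda t}-1 .
\]
This gives $|g''(t)|\gtrsim e^{2\lambda t}$, hence $\Omega(e^{2\lambda R})$. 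Since $\lambda=1$ is attained in $\gl(n)$, this matches the statement, and it also reproduces the $\ssl(n)$ exponent $c_n$ of Remark~\ref{rem:sln_exponent}. To obtain the second derivative exactly rather than only a bound, I would instead use the rank-one reward $-\tfrac12\langle w,(\exp(a)-I)v\rangle^2$, so that $g$ is an explicit function of $e^{\lambda t}$.

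Second, I would pass from $g$ to the smoothed objective $J$. The noise $\xi$ lies in $\g$, but in the witness it can be restricted to the span of $H$, because the policy class permits choosing features and an isotropic noise restricted to $\R H$. Then $J''(t)=(1-\gamma)^{-1}\E_\zeta[g''(t+\zeta)]$ with $\zeta\sim\mathcal N(0,\sigma^2)$ truncated at $B_a$. Since $g''(s)\le -e^{2s}+e^s$ and $\zeta$ is bounded, $\E[e^{2(t+\zeta)}]\ge e^{2t}e^{-2B_a}$, which gives $|J''(R)|\ge c(\sigma,B_a,\gamma)\,e^{2R}$ for $R$ large with a constant independent of $R$. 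Finally, the mean value theorem applied to $J'$ between $R-1$ and $R$ converts this curvature bound into a lower bound on the Lipschitz constant of $\nabla J$ over $\{\|\theta\|_F\le R\}$.

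I expect the main obstacle to be compatibility with the standing hypotheses, because the unbounded reward $-\tfrac12(e^t-1)^2$ conflicts with $|R(s,a)|\le R_{\max}$. My first attempt is to clip the reward at level $R_{\max}(R)$ and note that the lower bound only needs $g$ correct on $[-R-B_a,R+B_a]$. However, if $R_{\max}$ must be fixed independently of $R$, the whole $e^{2R}$ growth would have to be absorbed into the $R_{\max}$ prefactor of Lemma~\ref{lem:rl_smoothness}. I would therefore state the lower bound relative to the normalized constant $L/R_{\max}$, or let $R_{\max}$ scale with the domain. Making that normalization explicit is the delicate point.
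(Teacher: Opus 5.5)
Your $\gl(n)$ argument is the paper's own witness: one state, $H=\mathrm{diag}(1,0,\ldots,0)$, reward $-\tfrac12\|\exp(a)-I\|_F^2$, and $g''(t)=-(2e^{2t}-e^t)$ along $tH$. The paper keeps untruncated noise on the diagonal subalgebra (the same liberty you take by restricting noise to $\R H$) and uses log-normal moments to get the exact $J''(t)=-(2e^{2t+2\sigma^2}-e^{t+\sigma^2/2})$, hence $|J''(R)|\ge e^{2R}$ for all $R\ge 0$. Your truncated-noise estimate, of order $e^{2R-2B_a}-e^{R+B_a}$, is cruder but suffices.

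\textbf{The bounded-reward reconciliation.} Normalizing by $R_{\max}$ cannot work. In your clipped construction, keeping $g$ exact on $[-R-B_a,R+B_a]$ forces $R_{\max}\ge\tfrac12(e^{R+B_a}-1)^2$, while $|g''|\le 3e^{2(R+B_a)}$ on that interval. Hence $L/R_{\max}\le 6/\bigl((1-\gamma)(1-e^{-R-B_a})^2\bigr)=\mathcal{O}(1)$. More fundamentally, with untruncated Gaussian noise the witness objective along $\R H$ is the Gaussian smoothing $(1-\gamma)^{-1}\,r*\phi_\sigma$. So $|J''|\le(1-\gamma)^{-1}\|r\|_\infty\|\phi_\sigma''\|_{L^1}\le 2R_{\max}/\bigl((1-\gamma)\sigma^2\bigr)$, uniformly in $R$, for every bounded reward and every algebra. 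Only your second option survives, and you should state it plainly: the reward is unbounded. An $R$-dependent clipping level gives a different MDP for each $R$, not one MDP with $L(R)=\Omega(e^{2R})$. So the bounded-reward clause of the Lie Group MDP definition is dropped, and the $e^{2R}$ is carried by the reward scale. The paper's proof does exactly this without saying so.

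\textbf{General hyperbolic $\g$.} Your argument, like the paper's, yields only $\Omega(e^{2\lambda R})$ with $\lambda=\lambda_{\max}(H)\le\|H\|_2\le\|H\|_F=1$. Moreover, $\lambda=1$ forces $H$ to be a rank-one orthogonal projector, which has trace $1$ and so never lies in $\ssl(n)$. Your phrase ``matches the statement'' is therefore true only for algebras containing such an element. The paper's proof likewise treats only $\gl(n)$ and concedes the exponent $c_n$ for $\ssl(n)$.

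Note also that $\|\exp(tH)-I\|_F\ge e^{\lambda t}-1$ gives no information about $g''$. The rank-one reward $-\tfrac12\langle w,(\exp(a)-I)v\rangle^2$ is therefore necessary, not optional. Since the reward is yours to choose, replace $\exp(a)$ by $\exp(a/\lambda)$ in it. This gives $g(t)=-\tfrac12\langle w,v\rangle^2(e^{t}-1)^2$, and hence the full $\Omega(e^{2R})$ for every $\g$ with a hyperbolic element. The same freedom, using $\exp(ka)$ with $k>0$, gives $\Omega(e^{2k\lambda R})$. This confirms that with unbounded rewards the exponent is set by the reward, not by $\g$. (Minor: a one-point state space is $G/G$, not $M=G$.)
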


\begin{proof}
The sharp exponential lower bound is most cleanly witnessed on $\gl(n)$,
which is a valid non-compact matrix Lie algebra containing the hyperbolic
element $H = \mathrm{diag}(1, 0, \ldots, 0)$; this suffices because
$\gl(n)$ satisfies all conditions of Definition~2 and avoids
trace-induced spectral constraints present in $\ssl(n)$ under Frobenius
normalization (where the tracelessness condition forces 
$\lambda_{\max}(H) \le \sqrt{(n-1)/n} < 1$ for any unit-Frobenius-norm 
$H \in \ssl(n)$, so the analogous construction yields only 
$\Omega(e^{2\sqrt{(n-1)/n}\,R})$ rather than $\Omega(e^{2R})$).

Take $\g = \gl(n)$ and $H = \mathrm{diag}(1, 0, \ldots, 0)$, so
$\|H\|_F = 1$ and $\lambda_{\max}(H) = 1$.
Consider the single-state bandit ($\gamma = 0$) within the class of
Definition~2: $\mathcal{S} = \{s_0\}$, $\mathcal{A} = \g$,
trivial dynamics $s_{t+1} = s_0$, and reward
$r(s_0, a) = -\tfrac{1}{2}\|\exp(a) - I\|_F^2$.

\textbf{Restriction to the diagonal subalgebra.}
To make the calculation explicit, restrict the Gaussian policy to the 
commutative subalgebra $\mathfrak{d}(n) = \mathrm{span}\{E_{11}, \ldots, E_{nn}\} 
\subset \gl(n)$ of diagonal matrices. Since $\mathfrak{d}(n) \subset \gl(n)$, 
any lower bound established on this subalgebra is also a lower bound for 
the full $\gl(n)$ class. Under this restriction, 
the noise $\xi \sim \mathcal{N}(0, \sigma^2 I_n)$ is diagonal, 
so $a = tH + \xi = \mathrm{diag}(t + \xi_1, \xi_2, \ldots, \xi_n)$ 
and $\exp(a) = \mathrm{diag}(e^{t+\xi_1}, e^{\xi_2}, \ldots, e^{\xi_n})$ 
(since $a$ is diagonal). Entries $j \ge 2$ contribute constants 
$E[-\frac{1}{2}(e^{\xi_j}-1)^2]$ independent of $t$, 
so $g(t) := J(tH)$ has second derivative determined entirely by the 
$(1,1)$ entry.

\textbf{Gaussian policy objective.}
For the Gaussian policy $\pi_\theta = \mathcal{N}(\theta, \sigma^2 I_n)$ 
restricted to $\mathfrak{d}(n)$, the RL objective restricted to the ray 
$\theta(t) = tH$ gives $a = tH + \xi$ with 
$\xi = \mathrm{diag}(\xi_1,\ldots,\xi_n)$, $\xi_j \overset{\text{i.i.d.}}{\sim} \mathcal{N}(0,\sigma^2)$.
Since $a$ is diagonal, $\exp(a) = \mathrm{diag}(e^{t+\xi_1}, \ldots, e^{\xi_n})$.
The $(1,1)$-entry contributes
\[
\E_{\xi_1}\!\left[-\tfrac{1}{2}(e^{t+\xi_1}-1)^2\right]
= -\tfrac{1}{2}\!\left(e^{2t+2\sigma^2} - 2e^{t+\sigma^2/2} + 1\right),
\]
using the log-normal moment $\E[e^{cu}] = e^{c\mu + c^2\sigma^2/2}$ for 
$u \sim \mathcal{N}(\mu,\sigma^2)$.
Entries $j \ge 2$ contribute constants independent of $t$.
Define $g(t) := J(tH)$. Differentiating twice:
\[
g''(t) = -\!\left(2e^{2t+2\sigma^2} - e^{t+\sigma^2/2}\right).
\]
For all $R \ge 0$ and any $\sigma > 0$:
\[
|g''(R)| = 2e^{2R+2\sigma^2} - e^{R+\sigma^2/2}
\ge e^{2R+2\sigma^2}
\ge e^{2R}.
\]
Hence the Hessian operator norm satisfies 
$\|\nabla^2 J(RH)\|_{\mathrm{op}} \ge |g''(R)| \ge e^{2R}$, 
and $L_{\nabla J}(R) = \Omega(e^{2R})$.
The bound holds for every $\sigma > 0$, confirming it is intrinsic to
the algebra type and not an artefact of the deterministic limit.
Since this MDP lies in the class of Definition~2, the lower bound
$L(R) = \Omega(e^{2R})$ applies to the Gaussian-policy RL objective class.
\end{proof}

\begin{corollary}[Tightness]
\label{cor:tightness}
The $\Theta(e^{2R})$ growth of the gradient Lipschitz constant---arising 
from composition of the $e^R$-bounded Fr\'{e}chet derivative factors 
(Lemma~\ref{lem:frechet_bounds})---is tight for non-compact algebras. 
Combined with Proposition~\ref{prop:compact_advantage}, this 
yields the dichotomy stated in Theorem~6.1: 
$L(R) = \mathcal{O}(1)$ for compact algebras vs.\ 
$L(R) = \Theta(e^{2R})$ for non-compact algebras.
\end{corollary}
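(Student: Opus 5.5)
The corollary follows by combining the three ingredients already established, so the proof is an assembly argument rather than a new estimate. I treat the upper and lower bounds separately and then read off the dichotomy.

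\emph{Upper bound.} I start from Proposition~\ref{prop:explicit_lipschitz}. Its exponential factor comes from Lemma~\ref{lem:frechet_bounds}: part~(i) bounds the first-order Fr\'{e}chet factor by $\sqrt{n}\,e^{R}$, and part~(ii) bounds the Lipschitz modulus of $\theta\mapsto D_\theta$ by $\sqrt{n}\,e^{R}$. Differentiating $J$ twice through $\theta\mapsto\exp(\cdot)$, the product rule yields terms either of the form (first-order factor)$\times$(first-order factor) or (Lipschitz modulus of $D$)$\times$(bounded outer factor). On the ball $\{\|\theta\|_F\le R\}$ each such term is $\mathcal{O}(n e^{2R})$ times the $R$-independent RL prefactor $4R_{\max}B_\Phi B_aC_d/[(1-\gamma)^3\sigma^2]$. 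This gives $L(R)=\mathcal{O}(e^{2R})$ for every $\g$.

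\emph{Lower bound.} Here I invoke Proposition~\ref{prop:lower_bound}. For the single-state witness on $\gl(n)$, with reward $-\tfrac12\|\exp(a)-I\|_F^2$ and direction $H=\mathrm{diag}(1,0,\dots,0)$, the restriction $g(t)=J(tH)$ satisfies $|g''(R)|\ge e^{2R}$. Since $\|H\|_F=1$, we have $\|\nabla^2J(RH)\|_{\mathrm{op}}\ge e^{2R}$. A mean-value argument along the segment from $(R-\delta)H$ to $RH$ then shows that the gradient Lipschitz constant over the ball of radius $R$ is at least $\sup_{|t|\le R}|g''(t)|\ge e^{2R}$. Together with the upper bound this gives $\Theta(e^{2R})$, which establishes the claimed tightness.

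\emph{Compact side.} I combine this with Proposition~\ref{prop:compact_advantage}. Unitarity of $\exp(\theta)$ for $\theta\in\uu(n)$ removes every $e^{R}$ factor from Lemma~\ref{lem:frechet_bounds}, so $L_{\exp}=\mathcal{O}(\sqrt{n})$ independently of $R$, and the two regimes separate.

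\emph{Main obstacle: the scope of ``non-compact''.} The witness lives on $\gl(n)$, not on an arbitrary non-compact algebra containing a hyperbolic element. For $\ssl(n)$ the same construction only gives the exponent $2\sqrt{(n-1)/n}$, as in Remark~\ref{rem:sln_exponent}. Algebras such as $\se(n)$, which have no hyperbolic elements, grow only polynomially, as in Assumption~\ref{ass:smoothness_exp}. I would therefore state the tightness as exact $\Theta(e^{2R})$ for $\gl(n)$ (and for any $\g$ containing a diagonalizable element whose top eigenvalue equals its Frobenius norm, via the same rank-one reward). For a general hyperbolic $\g$, I would obtain $\Omega(e^{2cR})$ with $c=\lambda_{\max}(H)/\|H\|_F$ by taking $H$ to be a normalized hyperbolic element and rerunning the log-normal moment computation in its eigenbasis. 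The dichotomy statement should be read with this qualification, matching Theorem~\ref{thm:dichotomy}.
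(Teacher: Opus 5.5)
Your assembly matches the paper's: Fr\'{e}chet-derivative composition for the upper bound, the $\gl(n)$ witness of Proposition~\ref{prop:lower_bound} for the lower bound, and unitarity for the compact side. Your scope caveat is also correct. Read literally, ``$\Theta(e^{2R})$ for non-compact algebras'' is contradicted by the paper's own $\se(n)$ discussion and by the exponent $c_n$ for $\ssl(n)$.

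The genuine gap, which you inherit from Proposition~\ref{prop:lower_bound}, is that the two halves of your $\Theta$ are proved for different objective classes. The upper bound is derived under Definition~\ref{def:lie_mdp}, which requires $|r|\le R_{\max}$, together with (A7), and its prefactor is linear in $R_{\max}$. The witness violates both hypotheses:
\begin{itemize}
\item its reward $-\tfrac12\norm{\exp(a)-I}_F^2$ is unbounded on $\gl(n)$;
\item its log-normal moment $\E[e^{2\xi_1}]=e^{2\sigma^2}$ uses untruncated noise, contrary to (A7).
\end{itemize}
For the witness $R_{\max}=\infty$, so the upper bound you ``match'' it against says nothing about it.

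A better witness cannot fix this inside the class. There, $\theta$ enters $J$ only through the Gaussian mean. The exponential map sits in the dynamics and reward and is never differentiated with respect to $\theta$, so ``differentiating $J$ twice through $\theta\mapsto\exp(\cdot)$'' does not describe the actual chain. The trajectory-likelihood computation behind the Agarwal et al.\ bound you quote gives $\norm{\nabla^2 J(\theta)}_{\mathrm{op}}\le R_{\max}\bigl[M(1-\gamma)^{-2}+(1+\gamma)G^2(1-\gamma)^{-3}\bigr]$, where $G$ and $M$ are the $\theta$-independent score and score-Hessian bounds. This holds uniformly in $\theta$, for every $\g$ and every transition kernel. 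Multiplying it by $L_{\exp}(R_0)$ is therefore double counting, and $L(R)=\mathcal{O}(1)$ on $\gl(n)$ as well, so the claimed tightness cannot hold in the class as stated.

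Conversely, if you simply drop boundedness so that the witness counts, the compact half fails. Take a unit $K\in\so(n)$ and $r(a)=-\tfrac12(e^{\ip{a}{K}_F}-1)^2$. Then $\ip{tK+\xi}{K}_F=t+\zeta$ with $\zeta\sim\mathcal{N}(0,\sigma^2)$, so $J(tK)$ is exactly the paper's $g(t)$ and $|g''(R)|\ge e^{2R}$.

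A correct $\Theta$ statement must first fix one class that contains the witness, and then prove both bounds in that class. One option is rewards of the form $f(\exp(a))$ with $f$ of bounded ambient Hessian. Reparameterize as $a=\mu_\theta+\xi$, so that the Fr\'{e}chet derivatives are taken at $a$ rather than at $\theta$. Note that in that class part of the $e^{2R}$ comes from $\nabla f(\exp a)=\exp(a)-I$. That term grows like $e^{R}$, so it is not the ``bounded outer factor'' your upper-bound paragraph assumes.
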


\subsection{Experimental Validation}
\label{sec:smoothness_validation}

The bounds above have been validated on $\so(64)$, $\ssl(64)$, and 
$\gl(64)$ ($n=8$, $m=100$ samples). The ratio 
$L_{\mathrm{emp}}/L_{\mathrm{theory}} \approx 5 \times 10^{-4}$ confirms 
conservatism of worst-case bounds. Additional validations confirm: the 
predicted $\mathcal{O}(1/T)$ deterministic rate (fitted exponent $-0.98$)
and $\mathcal{O}(T^{-1/2})$ stochastic rate (fitted exponent $-0.49$,
five seeds; Figure~3 in the main paper),
$\mathcal{O}(\delta)$ stability scaling, and $\Omega(e^{2R})$ growth for 
non-compact algebras. Reproduction code is available at the repository listed in the main paper.

\section{Proof and Scope of the Losslessness Proposition}
\label{sec:losslessness_proof}

\begin{proof}[Proof of Proposition~2]
The identification $\mathcal{A} = \g$ is the defining assumption of the 
Lie Group MDP (Definition~2): actions are Lie-algebra-valued by 
construction, and $\iota: \mathbb{R}^{d_\g} \xrightarrow{\sim} \g$ is 
a linear isometry. Thus when we say $a^*(s_0) \in \g$, we mean the 
optimal action at $s_0$ lies in the action space $\mathcal{A} = \g$ 
as specified---not that a general tangent vector happens to lie in $\g$.
Under $G$-equivariance, the optimal deterministic policy satisfies 
$a^*(g \cdot s) = g \cdot a^*(s)$ for all $g \in G$ 
\cite[Proposition~2]{wang2022equivariant}. At the identity coset 
$s_0 = eH$, the tangent action is $\rho_{*,e} : \g \to T_{s_0}M$, 
which is surjective when $H$ is trivial (the case $M = G$, including 
$M = \SO(3)^J$). Thus $a^*(s_0) \in \g$, and by equivariance, 
$a^*(s) \in \g$ for all $s \in G$. Since the optimal deterministic 
action lies in $\g$, a Gaussian policy centered in $\g$ can approximate 
it to arbitrary precision by reducing $\sigma$, matching the return of 
any ambient-parameterized policy.
\end{proof}

\begin{remark}[Mathematical content of Proposition~2]
\label{rem:lossless_content}
The proposition is not a tautology. The action space $\mathcal{A} = \g$ 
is specified by the MDP definition, but that does not automatically 
imply the \emph{optimal} action lies in $\g$ for a policy with 
ambient mean $\mu_\theta(s) \in \R^{n\times n}$. The non-trivial 
content is the surjectivity argument: under $G$-equivariance with 
trivial isotropy, the tangent action $\rho_{*,e}:\g \to T_{s_0}M$ 
is surjective, which forces the optimal deterministic action to lie 
in $\g$ (not merely in the ambient $\R^{n\times n}$). Without 
$G$-equivariance or with non-trivial isotropy, this surjectivity 
fails and projecting onto $\g$ can degrade return.
\end{remark}

\begin{remark}[Scope and standing of Proposition~2]
\label{rem:equivariance_scope}
The $G$-equivariance assumption holds exactly for systems whose dynamics 
commute with group action (e.g., homogeneous rigid-body control), and 
approximately for locally symmetric systems. The experiments in the main 
paper are constructed to satisfy this condition exactly, so 
Proposition~2 functions as a \emph{consistency certificate}---confirming 
that the Lie parameterization does not lose optimality in that 
setting---rather than as empirical evidence that equivariance holds 
in general. For $M = G/H$ with non-trivial isotropy $H$, the tangent 
action $\rho_{*,e}:\g\to T_{eH}M$ is not surjective and the result 
does not apply; this case is deferred to future work.
\end{remark}


\section{Upper--Lower Bound Separation for \texorpdfstring{$\ssl(n)$}{sl(n)}}
\label{sec:sln_separation}

This section provides the full analysis supporting Remark~3 of the main paper,
which states that the best available lower bound for $\ssl(n)$ alone is
$\Omega(e^{2c_n R})$ with $c_n = \sqrt{(n-1)/n}$, while the $\mathcal{O}(e^{2R})$
upper bound holds for all algebras containing a hyperbolic element.

\subsection*{Why the upper bound holds for $\ssl(n)$}

The $\mathcal{O}(e^{2R})$ upper bound (Theorem~6.1 of the main paper) requires
only $\|\exp(\theta)\|_{\mathrm{op}} \le e^{\|\theta\|_F}$, which is algebra-agnostic:
it holds whenever $\g$ contains a hyperbolic element (one with a real positive eigenvalue).
Since any $H \in \ssl(n)$ with $\lambda_{\max}(H) > 0$ qualifies, the upper bound
$\mathcal{O}(e^{2R})$ applies to $\ssl(n)$ without modification.

\subsection*{Why the matching lower bound requires $\gl(n)$}

The witness MDP construction for the $\Omega(e^{2R})$ lower bound uses
$H = \mathrm{diag}(1, 0, \ldots, 0) \in \gl(n)$, which has unit Frobenius norm
($\|H\|_F = 1$) and $\lambda_{\max}(H) = 1$. Along the direction $tH$,
the objective restricts to $g(t) = -\tfrac{1}{2}(e^t - 1)^2$, and
$|g''(t)| = 2e^{2t} - e^t \ge e^{2t}$ for all $t \ge 0$,
giving the desired $\Omega(e^{2R})$ lower bound at $t = R$.

For $\ssl(n)$, the trace-zero constraint forces every unit-Frobenius-norm
element $H \in \ssl(n)$ to satisfy
\[
\lambda_{\max}(H) \le \sqrt{\frac{n-1}{n}} =: c_n < 1,
\]
since $\tr(H) = 0$ implies $\sum_i \lambda_i = 0$, and for a unit-norm
traceless matrix the maximum eigenvalue is bounded by $c_n$.
(This bound is tight: equality holds for
$H = \mathrm{diag}\!\bigl(\sqrt{(n-1)/n},\, {-1}/{\sqrt{n(n-1)}},\, \ldots\bigr)$.)

Applying the same witness construction to the best unit-Frobenius-norm
element of $\ssl(n)$, the objective along $tH$ satisfies
$|g''(t)| \ge e^{2c_n t}$ rather than $e^{2t}$, yielding only
$\Omega(e^{2c_n R})$ at $t = R$.

\subsection*{Numerical values of $c_n$}

\begin{table}[h]
\centering
\caption{Values of $c_n = \sqrt{(n-1)/n}$ for small $n$.}
\label{tab:cn_values}
\scriptsize
\begin{tabular}{@{}cccc@{}}
\toprule
$n$ & $c_n$ & $1 - c_n$ & Practical significance \\
\midrule
2 & $0.707$ & $0.293$ & Largest gap; upper--lower bound discrepancy is significant \\
3 & $0.816$ & $0.184$ & Moderate gap; $\ssl(2)$ in $\ssl(3)$ sub-problems \\
5 & $0.894$ & $0.106$ & Small gap \\
9 & $0.943$ & $0.057$ & Gap $< 6\%$ \\
16 & $0.968$ & $0.032$ & Gap $< 4\%$; effectively $\Theta(e^{2R})$ \\
$\infty$ & $1.000$ & $0.000$ & Asymptotically tight \\
\bottomrule
\end{tabular}
\end{table}

\noindent
\textbf{Practical recommendation.}
For $n \ge 9$ (e.g., $\ssl(9)$ and above), $c_n > 0.94$ and the gap
is within $6\%$; practitioners should treat $\Theta(e^{2R})$ as operative.
The only case where the discrepancy is practically significant is $n = 2$
($\ssl(2) \cong \so(3)$ after complexification, though $\ssl(2,\mathbb{R})$
is non-compact); for real applications this arises in 2D rotation--shear
parameterizations.

\section{Computational Scaling Stress Test at Large \texorpdfstring{$J$}{J}}
\label{sec:supp_scaling}

Table~\ref{tab:supp_scaling_stress} extends the timing benchmarks of
Table~5 in the main paper to $J \in \{50, 100, 200\}$, where the
$\mathcal{O}(d_{\g}^3)$ cost of Fisher inversion (Cholesky) versus
the $\mathcal{O}(n^2 J)$ cost of Lie projection becomes decisive.

\paragraph{Setup.}
Both operations are timed on isolated implementations (200 trials each,
CPU only) at each $J$. Fisher inversion uses a dense $(d_{\g} \times d_{\g})$
Cholesky solve; Lie projection uses blockwise skew-symmetrization
of the $J$ individual $3\times 3$ blocks. Full end-to-end optimization
at $J = 100$ and $J = 200$ was not benchmarked because Fisher inversion
at those scales is infeasible in practice ($d_{\g}^3 = 2.7\times 10^7$
and $2.16\times 10^8$ flops respectively per step). Alignment metrics
at these scales are estimated from the Fisher eigenvalue distribution
of the block-diagonal approximation (Proposition~5.3 of the main paper).

\begin{table}[h]
\centering
\caption{Computational scaling stress test: Lie projection vs.\ Fisher
inversion at large $J$. Alignment and $\kappa$ are estimated from the
block-diagonal Fisher structure (Proposition~5.3 of the main paper).}
\label{tab:supp_scaling_stress}
\scriptsize
\setlength{\tabcolsep}{4pt}
\begin{tabular}{@{}ccrrrcr@{}}
\toprule
$J$ & $d_{\g}$ & Fisher ($\mu$s) & Proj.\ ($\mu$s) & Speedup
& Alignment & $\kappa$ \\
\midrule
50  & 150 & 1{,}240 & 89  & $13.9\times$ & $0.898 \pm 0.012$ & 3.21 \\
100 & 300 & 8{,}450 & 175 & $48.3\times$ & $0.884 \pm 0.015$ & 3.65 \\
200 & 600 & 62{,}300 & 348 & $179\times$ & $0.862 \pm 0.018$ & 4.12 \\
\bottomrule
\end{tabular}
\end{table}

\paragraph{Observations.}
Fisher inversion time grows as $\mathcal{O}(d_{\g}^3) = \mathcal{O}(J^3)$
(confirmed: $8{,}450 / 1{,}240 \approx 6.8 \approx 2^3$),
while Lie projection grows as $\mathcal{O}(n^2 J) = \mathcal{O}(J)$
(confirmed: $175 / 89 \approx 2.0 \approx 2^1$).
At $J = 200$ the projection-step speedup reaches $179\times$.
Fisher alignment degrades gracefully with $J$: at $J=200$,
$\alpha \ge 0.86$ with $\kappa \approx 4.1$, which remains within the
moderate-anisotropy regime of Proposition~7.4 in the main paper
where LPG is recommended.

\section{Symmetry-Violation Robustness Results}
\label{sec:supp_robustness}

This section provides the full robustness results summarised in
Section~9.5 of the main paper. We introduce three controlled
equivariance-breaking perturbations to the $\SO(3)^{10}$ environment
and measure the effect on return, Fisher alignment, and condition number.

\paragraph{Perturbations.}
\begin{enumerate}[label=(\alph*), leftmargin=2em]
\item \textbf{Stochastic transitions}: multiplicative Lie-algebra noise
$R_{j,t+1} = R_{j,t}\exp(\omega_{j,t})\exp(\epsilon_t)$ with
$\epsilon_t \sim \mathcal{N}(0, \sigma_\epsilon^2 I_3)$ in $\so(3)$,
$\sigma_\epsilon \in \{0.01, 0.05, 0.1\}$. This breaks exact
$G$-equivariance of the dynamics.
\item \textbf{Observation noise}: the agent observes
$\tilde{R}_j = R_j \exp(\delta_j)$ with
$\delta_j \sim \mathcal{N}(0, \sigma_{\mathrm{obs}}^2 I_3)$,
$\sigma_{\mathrm{obs}} = 0.05$. This breaks alignment between the
observed state and the true group element.
\item \textbf{Reward perturbation}:
$\tilde{r} = r + \zeta_r$ with $\zeta_r \sim \mathcal{N}(0, 0.1^2)$.
This breaks $G$-invariance of the reward function.
\end{enumerate}

\begin{table}[h]
\centering
\caption{LPG robustness under equivariance-breaking perturbations
($\SO(3)^{10}$, 5 seeds, 40 iterations each). Baseline: no perturbation.}
\label{tab:supp_robustness}
\scriptsize
\setlength{\tabcolsep}{4pt}
\begin{tabular}{@{}lccc@{}}
\toprule
Perturbation & Final Return & Alignment & $\kappa$ \\
\midrule
None (baseline) & $-964.9 \pm 39.4$ & $0.971$ & $2.53$ \\
\addlinespace[2pt]
Transition noise $\sigma_\epsilon = 0.01$ & $-978.3 \pm 42.1$  & $0.968$ & $2.61$ \\
Transition noise $\sigma_\epsilon = 0.05$ & $-1021.7 \pm 55.8$ & $0.958$ & $2.85$ \\
Transition noise $\sigma_\epsilon = 0.10$ & $-1089.4 \pm 71.2$ & $0.942$ & $3.14$ \\
\addlinespace[2pt]
Observation noise $\sigma_{\mathrm{obs}} = 0.05$ & $-1003.5 \pm 48.6$ & $0.963$ & $2.72$ \\
Reward noise $\sigma_r = 0.1$                    & $-985.1 \pm 51.3$  & $0.970$ & $2.55$ \\
\bottomrule
\end{tabular}
\end{table}

\paragraph{Discussion.}
All perturbations degrade performance continuously rather than
catastrophically. Alignment remains above $0.94$ and $\kappa$ stays
below $3.2$ under the strongest transition noise tested, keeping all
runs firmly within the LPG-recommended regime ($\kappa < 5$,
Proposition~7.4 in the main paper). Reward noise has negligible effect
on Fisher geometry (it increases gradient variance but does not alter
the score function structure). Transition noise increases $\kappa$
by breaking exact $G$-equivariance (Proposition~4.4 in the main paper),
but the magnitude is modest: the worst-case $\kappa = 3.14$ at
$\sigma_\epsilon = 0.1$ corresponds to a Kantorovich alignment bound
$\ge 2\sqrt{3.14}/(3.14+1) = 0.856$, well above the $0.942$ empirical
value. These results confirm that the gap between the exact-symmetry
theory and approximate-symmetry practice is quantified by $\kappa$ and
remains benign for the perturbation magnitudes relevant to physical systems.

\section{Assumption Discussion: Remarks~S1--S3}
\label{sec:supp_assumption_remarks}

The following remarks were condensed in the main paper for space. They are
reproduced here in full for completeness and to support cross-referencing.

\begin{remark}[When is Assumption~(A3) binding?]
\label{rem:bounded_iterates}
For \emph{compact} algebras, $\exp(\theta)$ is unitary so policy
behavior is $\|\theta\|_F$-insensitive; the radius projection step in
Algorithm~1 triggers on fewer than $2\%$ of iterations in our experiments
(Table~6 of the main paper), and Assumption~(A3) is satisfied with any
large enough $B_\theta$ at negligible algorithmic cost. The
$\mathcal{O}(1/\sqrt{T})$ convergence rate holds for compact algebras with the
projection non-restrictive in practice (Corollary~7.6 of the main paper).
For \emph{non-compact} algebras the assumption is essential: without it,
$L = \Theta(e^{2R})$ growth causes step sizes to vanish
(Theorem~6.1 of the main paper; see also Section~9.6 of the main paper
for the $\SE(3)$ illustration where $\|\theta\|_F$ drifts to ${\sim}18$
without projection).
\end{remark}

\begin{remark}[Geometric barrier vs.\ stochastic mixing]
\label{rem:geometric_vs_mixing}
The exponential smoothness barrier of Theorem~6.1 (main paper) is a
\emph{geometric fact}: it depends only on the Lie algebra type and the
Fr\'{e}chet derivative of the matrix exponential, independent of any
mixing or ergodicity properties of the MDP.
Concentrability (Assumption~(A8)) enters separately through the stochastic
gradient analysis and bounds the state-distribution shift under policy
perturbation. Thus, even if mixing is fast, non-compact algebras still
incur $L = \Theta(e^{2R})$; conversely, the favorable $L = \mathcal{O}(1)$
bound for compact algebras holds regardless of mixing speed. This
separation clarifies that the compact/non-compact dichotomy is a property
of the \emph{policy parameterization}, not of the \emph{environment dynamics}.
\end{remark}

\begin{remark}[Operating regime and assumption verification]
\label{rem:operating_regime}
For the compact $+$ isotropic cell of Table~1 in the main paper---the regime
where LPG is recommended---all assumptions hold with explicit constants:
\begin{itemize}[leftmargin=1.5em,itemsep=1pt]
\item (A1): $L = \mathcal{O}(1)$ by Proposition~\ref{prop:compact_advantage} (compact algebra) and
  Lemma~3.2 of the main paper (Gaussian policy class).
\item (A8): $C_d$ is bounded by the ergodic mixing time of $\SO(3)^J$
  \cite{agarwal2021theory}, which is finite by compactness.
\item (A5)--(A6): hold with $\sigma_g^2$ bounded by the REINFORCE variance
  formula and $\eta_t = \eta/\sqrt{T}$.
\end{itemize}
The remaining cells of Table~1 are included for theoretical completeness.
In practice: estimate $\kappa$ from trajectory data
(Remark~5.4 of the main paper); if $\kappa < 5$, the
compact $+$ approximately-isotropic regime applies and all assumptions are
verifiable with the constants above; if $\kappa > 10$, prefer explicit Fisher
inversion (Remark~7.5 of the main paper). The $L = \Theta(e^{2R})$ barrier
(Theorem~6.1) is a geometric fact independent of mixing speed
(Remark~\ref{rem:geometric_vs_mixing} above).
\end{remark}

\end{document}